\tikzset{
	vertex/.style={circle,draw,minimum size=1.5em},
	edge/.style={->,> = latex'}
}
\newcommand{\N}{\mathbb{N}}
\newcommand{\Z}{\mathbb{Z}}
\newcommand{\Zm}{\mathbb{Z} / m \mathbb{Z}}
\newtheorem{theorem}{Theorem}[section]
\newtheorem{corollary}[theorem]{Corollary}
\newtheorem{lemma}[theorem]{Lemma}
\newtheorem{prop}[theorem]{Proposition}
\theoremstyle{definition}
\newtheorem{definition}[theorem]{Definition}
\newtheorem{remark}[theorem]{Remark}
\title{On structures induced by the power sequences of $(\Zm, \cdot)$\thanks{The publication is unclassified and has been assigned LANL identifier LA-UR-20-21570.} }
\author{Kelly Isham \thanks{ The first author was supported in part by an appointment with the National Science Foundation (NSF) Mathematical Sciences Graduate Internship (MSGI) Program sponsored by the NSF Division of Mathematical Sciences.}
	\and Laura Monroe \thanks{A portion of this work was performed at the Ultrascale Systems Research Center (USRC) at Los Alamos National Laboratory, supported by the U.S. Department of Energy contract DE-FC02-06ER25750.}
}
\date{}
\begin{document}

\maketitle
\begin{abstract}
In this paper, we explore the structure of $\Zm$ in terms of its orbits under modular exponentiation, illustrating this with a sequential power graph that is naturally derived from the orbits by connecting elements of $\Zm$ in the orbit order in which they appear. 

We find that this graph has a great deal of fascinating algebraic structure. The connected components are composed of orbits that all share at least one element. The vertex sets of the connected components are shown to depend on the factorization of $m$; in fact, the connected components are completely determined by the units of $\Zm$, the idempotents of $\Zm$ and the square-free divisors of $m$. Both tails and non-tails of the components can be described explicitly and algebraically in terms of these elements of $\Zm$. Finally, a lattice of components can be used to show homomorphisms between the non-tails of any two comparable components in the lattice. 

This extensive structure is used here to prove an algebraic identity on the roots of an idempotent mod $m$, and may be exploited to prove other identities as well.

\end{abstract}

\section{Introduction}

The structure of the unit group $(\Zm)^\times$ is well-understood. In this paper, we explore the structures of other subsets of $\Zm$, whether or not they are units, in terms of their orbits under modular exponentiation. 

Consider the power sequence $a, a^2, a^3, \ldots $ for some $a \in \Zm$. Since this monoid is finite, this sequence must be finite as well, that is, there exists $j < k$ so that $a^j = a^k$ in $\Zm$. We call the sequence $a, a^2, \ldots, a^{k-1}$ the \textit{orbit} of $a$. We then break the orbit into two distinct parts: the \textit{cycle} of $a$ is the sequence $a^j, a^{j+1}, \ldots, a^{k-1}$ and the \textit{tail} of $a$ is the sequence $a, a^2, \ldots, a^{j-1}$. (Observe that if $j=1$, then the orbit of $a$ has no tail.) 

In order to understand all orbits in $\Zm$, we group all orbits that share common elements together by defining a graph $(V, E)$. The vertex set is $\Zm$ and the elements $a,b$ are connected by a directed graph if and only if they appear next to each other in some orbit. More formally, $(a,b) \in E$ if and only if $a = c^i$ and $b = c^{i+1}$ for some $c \in \Zm$ and $i \in \N.$ 

We call this graph the \textit{sequential power graph}. The sequential power graph is very structured, as we will see throughout this paper. 

\subsection{Prior work}
While we have not seen this graph defined in the literature, there are two similar graphs that have been discussed that share several properties with the sequential power graph. The \textit{cycle grap}h was defined by Shanks in \cite{shanks}. This graph depicts orbits of elements $a \in \Zm$ that do not overlap. For example, if $m = 15$, then the orbit $3, 9, 12, 1$ will be included, but not $9,1$ since this is a subsequence of the orbit of 3. The edges for this orbit would be $(3,9), (9,12), (12,1),$ and $(1, 3)$. This graph is a subgraph of the sequential power graph and the vertex set for each component is the same in both graphs. In particular, Shanks' graph is the first to illustrate tail elements and Shanks proves some basic properties of tails. Kelarev and Quinn in \cite{kelarev} defined the \textit{directed power graph} with vertex set $\Zm$ and edges $(a,b)$ if and only if $b$ is a power of $a$. Other authors, such as Chakrabarty, Ghosh, and Sen in \cite{chakrabarty}, considered an undirected version of the power graph. 

The set of orbits of elements in $\Zm$ has also been studied from the point of view of semigroup theory. One can define the vertices in each component of the above graphs as the sets $C_d = \{x \in \Zm \, : \, x^k = d\}$ for some idempotent $d$ modulo $m$. Hewitt and Zuckerman in \cite{hewitt} and Schwarz in \cite{schwarz} used this approach to study $C_d$. They found that there is a subgroup of $(\Zm, \cdot)$ contained in each $C_d$ with identity $d$. These are called \textit{maximal groups} of $(\Zm, \cdot)$. Schwarz also provides a homomorphism between the group of units and the other maximal groups.

The maximal subgroups play an important role in the structure of the sequential power graph and several of the ideas from these authors apply here. However, the proofs in both \cite{hewitt} and \cite{schwarz} are algebraic and do not explicitly describe the elements in each $C_d$. 

\subsection{Contributions}
We introduce the concept of a sequential power graph and explore its algebraic structure, emphasizing non-units as well as units. We show that the vertex sets of the connected components of this graph are completely determined by the factorization of $m$. 

We also show the correspondence between the connected components, the units $(\Zm)^\times$, the idempotents of $\Zm$ and the square-free divisors  of $m$: in fact, the structure of  the unit component $(\Zm)^\times$ in combination with a given square-free  divisor  $\pi$ of $m$ suffices to determine the structure of the connected component associated with $\pi$, including its  idempotent. 

We create a lattice of components of the sequential power graph and use this to define a homomorphism that works between any two components that are comparable within the lattice. We also demonstrate that the inverse of this map is easy to understand.

We provide number-theoretic proofs of several previously known results found in the above papers, and then extend these results by classifying the structure of the tails. We also provide formulas for determining the sum of all elements in a component $C$ where the elements are thought of as integers; that is, we take all $b \in C$ where the $b$ are elements of $\Zm$, and add the $b$ as ordinary integers. These formulas depend only on $m$ and the idempotent defining this component. 

\section{Background}

\subsection{Orbits and cycles}
We start with any element $a \in \Zm$ and compute the orbit of $a$. Because the ring $\Zm$ is finite, the orbit stabilizes at some point (i.e., $a^k \equiv a^{\ell}\pmod{m}$ for some $k$ and $\ell\in \N$). In other words, every orbit contains a cycle. 

If the least such $k=1$, then the orbit does not have a tail, so the entire orbit forms a cycle. If not, the orbit does have a tail, and its cycle is a proper subset of the orbit. These cycles may be common to several orbits. 

\subsection{Definitions} \label{definitions}
\subsubsection{General definitions}
Throughout this paper, we assume that the factorization of $m$ is known.

\begin{definition}
	A \textit{sequential power graph} is a digraph with vertex set $\Zm$ and an edge between $b$ and $c$ if they appear sequentially in the set \{$a, a^2, a^3, \ldots$\} of some element $a \in \Zm$, that is, if $a \equiv c^i \pmod{m}$ and $b \equiv c^{i+1} \pmod{m}$ for some $c \in \Zm$ and some $i \in \N$. 
\end{definition}
Note that we do not discuss the graph theoretic properties in this paper. We simply use this expression as a graph as a visual aid and to make the definition about connected components below. For example, when $m = 36$, we have the following graph\\
\begin{center}
	\includegraphics[width=19cm, trim={.9cm 0 0 0},clip]{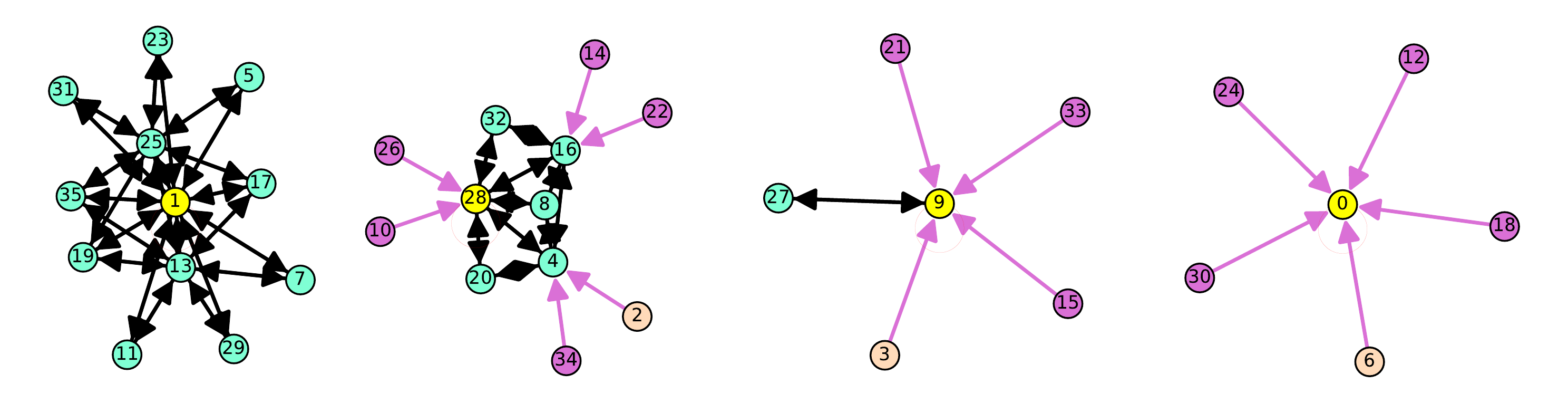}
\end{center}
where the tails are colored pink, the idempotents are yellow, and the multipliers are peach.

\subsubsection{Structural definitions}
\begin{definition}
	An \textit{orbit} of an element $a$ is the set of powers of $a \pmod{m}$ $\{a^1, a^2, \cdots\}$.
\end{definition}
\begin{definition}
	A \textit{cycle} in the orbit of $a$ is a subsequence $a^j, a^{j+1}, \ldots , a^{j+\ell} = a^j$, where $j,\ell\in\N, j<\ell$, and $j$ and $\ell$ are the least such integers having this property.
\end{definition}
\begin{definition}
	A \textit{tail} is the initial sequence $a , a^2 , \ldots, a^{j-1}$ of an orbit, where $j$ is the least such integer so that $a^j$ starts a cycle in the orbit of $a$.
\end{definition}
\begin{definition}
	A \textit{connected component} of a sequential power graph is a maximal set of vertices all of which can be reached from each other by some (undirected) path.
\end{definition}

\subsubsection{Special elements in the connected components}
\begin{definition}
	Let $I = \{i_1, \ldots, i_s\}\subseteq R$. Then $\pi_I = p_{i_1} \cdots p_{i_s}$ is called a \textit{multiplier} of $m$ with respect to $I$. \end{definition}

\begin{definition}
	An element $d \in \Zm$ is \textit{idempotent} if $d^2 \equiv d \pmod{m}$.
\end{definition}

\begin{remark}
	We will prove the structure of the idempotents in Theorem \ref{idem_struc}. Let $I = \{i_1, \ldots, i_s\}$. To introduce notation early on, this theorem implies that $d_I = a_I g_I$ where $g_I = p_{i_1}^{e_{i_1}} \cdots p_{i_s}^{e_{i_s}}$ and $a_I$ can easily be computed from $m$ and $g_I$. The idempotent $d_I$ is determined by the set $I$. The elements $g_I$ are also very interesting and we will refer to them throughout the paper.
\end{remark}

\subsubsection{Other interesting elements}

\begin{definition}
	An element $n \in \Zm$ is \textit{nilpotent} if $n^k \equiv 0 \pmod{m}$ for some $k \in \N$.
\end{definition}

\section{Notation}\label{notation}
We define certain notations that we will use throughout this paper.
\begin{itemize}
	\item $m = p_1^{e_1} \cdots p_r^{e_r}$ is the modulus. 
	\item $R = \{1, 2, \ldots, r\}$ is the set of all indices referring to the prime powers dividing $m$. 
	\item $I = \{i_1, \ldots, i_s\} \subseteq R$ defines a connected component $C_I$, its idempotent and its multiplier, as discussed later in the text. 
	\begin{itemize}
		\item[$\ast$] $g_I=$ gcd$(d_I,m)= p_{i_1}^{e_{i_1}} \cdots p_{i_s}^{e_{i_s}}$ is the greatest common divisor of $d_I$ and $m$.
		\item[$\ast$] $d_I = a_Ig_I$ is the idempotent corresponding to $C_I$, with $a_I \equiv g_I^{-1} \pmod{\frac{m}{g_I}}$.
		\item[$\ast$] $p_I = p_{i_1} \cdots p_{i_s}$ is the multiplier of $C_I$.
	\end{itemize}
	\item $R \setminus I = \{j_1, \ldots, j_{r-s}\} \subset R$ defines the connected component $C_{R \setminus I}$, the dual to $C_I$.
	\begin{itemize}
		\item[$\ast$] $g_{R \setminus I}=$ gcd$(d_{R \setminus I},m)= p_{j_1}^{e_{j_1}} \cdots p_{j_{r-s}}^{e_{j_{r-s}}} = \frac{m}{g_I}$ is the greatest common divisor of $d_{R \setminus I}$ and $m$.
		\item[$\ast$] $d_{R \setminus I} = a_{R \setminus I}g_{R \setminus I}$ is the idempotent corresponding to $C_{R \setminus I}$, with $a_{R \setminus I}$ relatively prime to $\frac{m}{g_{R \setminus I}}=g_I$.
		\item[$\ast$] $p_{R \setminus I} = p_{j_1} \cdots p_{j_{r-s}}$ is the multiplier of $C_{R \setminus I}$.
	\end{itemize}
	
\end{itemize}

\section{The units and idempotents of $\Zm$}
As we will see below, the structure of the sequential power graph of $\Zm$ is entirely defined by the units $U = (\Zm)^\times$, the set of idempotents $d$ of $\Zm$, and the set of multipliers $\pi$. The structure of $U$ and the idempotents has been studied extensively. We discuss in this section the structure of these sets and derive a few useful identities.

\subsection{Structure of the component $U=(\Zm)^\times$}
\label{unit_struc}

The elements in $U$ are the roots of 1, and are thus connected to 1 in the graph. Further, if $a \in \Zm$ were such that $a^k \equiv 1 \pmod{m}$ for some $k \in \N$, then $\text{gcd}(a^k, m) = 1$ and therefore $\text{gcd}(a, m) = 1$. This implies that $U$ forms one of the connected components of $\Zm$. This component is a group under multiplication, and has well-understood structure (see e.g., \cite{ireland}),  which depends on the form of $m$. We restate that structure here, for completeness.

\begin{theorem}
	\cite[Chapter 4, Theorem 3]{ireland} Let $m = 2^e p_1^{e_1} \cdots p_{r}^{e_{r}}$, where all $p_i$ are odd primes. Then
	\[
	(\Zm)^\times \simeq 
	\begin{cases}
	(\Z / p_1^{e_1}\Z)^\times \times (\Z / p_2^{e_2}\Z)^\times\times \cdots \times (\Z / p_r^{e_r}\Z)^\times , & e=0 \text{ or } 1 \\
	\Z / 2 \Z \times (\Z / p_1^{e_1}\Z)^\times \times (\Z / p_2^{e_2}\Z)^\times\times \cdots \times (\Z / p_r^{e_r}\Z)^\times , & e=2 \\
	\Z / 2\Z \times \Z / 2^{e-2} \Z\times (\Z / p_1^{e_1}\Z)^\times \times (\Z /p_2^{e_2}\Z)^\times\times \cdots \times (\Z /p_r^{e_r}\Z)^\times , & e>2 \\
	\end{cases}
	\]
\end{theorem}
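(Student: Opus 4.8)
The statement to prove is the classical structure theorem for $(\Z/m\Z)^\times$, attributed to Ireland and Rosen. Since the paper explicitly cites this as a known result stated "for completeness," my plan is to give the standard proof via the Chinese Remainder Theorem followed by a case analysis on the prime $2$.

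\textbf{The plan.} First I would invoke the Chinese Remainder Theorem: if $m = p_1^{a_1} \cdots p_k^{a_k}$ is the prime factorization, then the ring isomorphism $\Z/m\Z \simeq \prod_i \Z/p_i^{a_i}\Z$ restricts to a group isomorphism on units, $(\Z/m\Z)^\times \simeq \prod_i (\Z/p_i^{a_i}\Z)^\times$. This immediately reduces everything to prime powers, and separates the factor $2^e$ from the odd part $p_1^{e_1}\cdots p_r^{e_r}$. The three cases $e=0,1$; $e=2$; $e>2$ in the statement are then exactly governed by the structure of $(\Z/2^e\Z)^\times$, since when $e \le 1$ the group $(\Z/2^e\Z)^\times$ is trivial and contributes nothing, when $e=2$ it is $\Z/2\Z$, and when $e>2$ it is $\Z/2\Z \times \Z/2^{e-2}\Z$.

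\textbf{Odd prime powers.} For an odd prime $p$, I would show $(\Z/p^{a}\Z)^\times$ is cyclic of order $p^{a-1}(p-1)$. The standard route: first show $(\Z/p\Z)^\times$ is cyclic (a finite subgroup of the multiplicative group of a field is cyclic, since $x^d - 1$ has at most $d$ roots over $\Z/p\Z$); then lift a generator $g$ of $(\Z/p\Z)^\times$ to an element of $(\Z/p^a\Z)^\times$, adjusting if necessary so that $g^{p-1} \not\equiv 1 \pmod{p^2}$, and use the lifting-the-exponent / binomial-expansion argument to show the order of $1+p$ is exactly $p^{a-1}$; combining the element of order $p-1$ with one of order $p^{a-1}$ (coprime orders) gives a generator. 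This yields the odd factors $(\Z/p_i^{e_i}\Z)^\times$ appearing uniformly in all three cases.

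\textbf{Powers of $2$.} The case $p=2$ is the genuinely exceptional part and is where the three-way split comes from. For $e = 1$, $(\Z/2\Z)^\times$ is trivial; for $e=2$, $(\Z/4\Z)^\times = \{1,3\} \simeq \Z/2\Z$; for $e \ge 3$, I would show $(\Z/2^e\Z)^\times \simeq \Z/2\Z \times \Z/2^{e-2}\Z$, with the $\Z/2\Z$ generated by $-1$ and the $\Z/2^{e-2}\Z$ generated by $5$. The key computation is that $5$ has multiplicative order exactly $2^{e-2}$ modulo $2^e$: one shows by induction on $j$ that $5^{2^{j}} \equiv 1 + 2^{j+2} \pmod{2^{j+3}}$, so $5^{2^{e-3}} \not\equiv 1$ but $5^{2^{e-2}} \equiv 1 \pmod{2^e}$; and that $-1$ is not a power of $5$ (every power of $5$ is $\equiv 1 \pmod 4$, while $-1 \equiv 3$), so $\langle -1\rangle \cap \langle 5 \rangle$ is trivial and a cardinality count finishes it. I expect this $2$-power order computation — pinning down the order of $5$ and the triviality of the intersection with $\langle -1 \rangle$ — to be the main technical obstacle; everything else is bookkeeping via CRT. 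Assembling the pieces gives precisely the three displayed cases.
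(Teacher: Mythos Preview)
Your proposal is correct and follows the standard route (CRT reduction to prime powers, cyclicity for odd $p^a$, and the $\langle -1\rangle \times \langle 5\rangle$ analysis for $2^e$). Note, however, that the paper does not give its own proof of this statement: it is simply quoted from \cite{ireland} as background, so there is no in-paper argument to compare against. Your write-up supplies exactly the classical proof that the cited reference contains.
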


\subsection{Structure of the idempotents in $\Zm$}
We record the important structural idempotent theorems here for use later. 
\label{idems}
\begin{prop}
	The idempotents of $\Zm$ form a monoid under multiplication.
\end{prop}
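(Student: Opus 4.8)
The plan is to verify the two monoid axioms directly: closure under multiplication, and the existence of an identity element. Let $E \subseteq \Zm$ denote the set of idempotents, i.e. those $d$ with $d^2 \equiv d \pmod m$. The element $1$ is clearly idempotent since $1^2 = 1$, so $1 \in E$ and it serves as the identity for multiplication restricted to $E$. Multiplication in $\Zm$ is associative, so the only remaining thing to check is closure.

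For closure, suppose $d, e \in E$, so $d^2 \equiv d$ and $e^2 \equiv e \pmod m$. The key step is that $\Zm$ is a commutative ring, so $(de)^2 = d^2 e^2 \equiv d e \pmod m$, which shows $de \in E$. Commutativity is what makes this work cleanly; I would state it explicitly since without it one cannot rearrange $dede$ into $d^2e^2$. That completes the proof: $E$ is closed under multiplication, multiplication is associative, and $1 \in E$ is the identity, so $(E, \cdot)$ is a monoid.

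I do not anticipate any real obstacle here — this is a routine verification and the only subtlety worth flagging is that one should not expect $E$ to be a group: idempotents other than $1$ are typically non-units (for instance any $d$ with $1 < \gcd(d,m) < m$ cannot have a multiplicative inverse mod $m$), so there is no hope of inverses in general, and "monoid" is the correct level of structure to claim. If one wanted to be more concrete, one could alternatively invoke the Chinese Remainder Theorem: writing $m = p_1^{e_1} \cdots p_r^{e_r}$, the idempotents of $\Zm$ correspond under CRT to tuples $(\epsilon_1, \ldots, \epsilon_r)$ with each $\epsilon_i \in \{0, 1\}$, and componentwise multiplication of such tuples again yields a $\{0,1\}$-tuple, with the all-ones tuple as identity. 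Either argument suffices; I would present the one-line commutative-ring computation as the main proof and perhaps mention the CRT description as motivation for the later structural results on idempotents.
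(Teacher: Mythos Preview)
Your proof is correct. The paper actually states this proposition without proof, treating it as well known; your direct verification via commutativity (and the optional CRT description) is exactly the standard argument and fills in what the paper omits.
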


\begin{theorem}\cite[Theorem 2.2]{hewitt}  Let $I = \{i_1, \ldots, i_s\}\subseteq R$. Then the idempotents of $\Zm$ are of the form $a_Ig_I \pmod{m}$ where $a_I \equiv g_I^{-1} \pmod{\frac{m}{g_I}}$.
	\label{idem_struc}
\end{theorem}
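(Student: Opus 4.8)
The plan is to prove Theorem \ref{idem_struc} by reducing to the prime-power case via the Chinese Remainder Theorem, where idempotents are easy to classify, and then transporting the answer back to $\Zm$. First I would recall the ring isomorphism $\Zm \cong \prod_{i=1}^r \Z/p_i^{e_i}\Z$, noting that under a product decomposition of rings an element is idempotent if and only if each of its coordinates is idempotent. Next I would observe that in $\Z/p^e\Z$ the only idempotents are $0$ and $1$: if $d^2 \equiv d$, then $d(d-1) \equiv 0 \pmod{p^e}$, and since $\gcd(d, d-1) = 1$, all of $p^e$ must divide one of the two factors. Hence the idempotents of $\Zm$ are exactly the $2^r$ elements whose CRT-coordinate vector is a $0/1$ vector; such a vector is naturally indexed by the subset $I \subseteq R$ of coordinates where it equals $0$ (or $1$ — I would fix the convention to match the statement, where $g_I = \prod_{i \in I} p_i^{e_i}$ divides $d_I$, so $d_I \equiv 0 \pmod{p_i^{e_i}}$ for $i \in I$).

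The core of the argument is then to identify the explicit integer representative. For a fixed $I$, the idempotent $d_I$ is characterized by $d_I \equiv 0 \pmod{p_i^{e_i}}$ for all $i \in I$ and $d_I \equiv 1 \pmod{p_j^{e_j}}$ for all $j \in R \setminus I$. The first batch of congruences says exactly $g_I \mid d_I$, so write $d_I = a_I g_I$; substituting into the second batch, and using that $\gcd(g_I, m/g_I) = 1$ so $g_I$ is a unit modulo $m/g_I = \prod_{j \notin I} p_j^{e_j}$, we get $a_I g_I \equiv 1 \pmod{m/g_I}$, i.e. $a_I \equiv g_I^{-1} \pmod{m/g_I}$. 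This is precisely the claimed form. I would then note that $a_I g_I \bmod m$ is well-defined: changing $a_I$ by a multiple of $m/g_I$ changes $a_I g_I$ by a multiple of $m$, so the class mod $m$ does not depend on the choice of representative $a_I$.

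To finish, I would verify the converse direction — that every element of the stated form $a_I g_I$ with $a_I \equiv g_I^{-1} \pmod{m/g_I}$ is genuinely idempotent — by checking the two families of congruences: $a_I g_I \equiv 0 \pmod{p_i^{e_i}}$ for $i \in I$ since $p_i^{e_i} \mid g_I$, and $a_I g_I \equiv 1 \pmod{p_j^{e_j}}$ for $j \notin I$ by the defining congruence on $a_I$; an element congruent to $0$ or $1$ modulo each prime-power factor squares to itself modulo each factor, hence modulo $m$ by CRT. Counting shows these $2^r$ elements (one per subset $I \subseteq R$) exhaust all idempotents, so the description is complete.

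The main obstacle is mostly bookkeeping rather than a deep difficulty: one must be careful that the convention for which coordinates of $I$ carry $0$ versus $1$ is the one that makes $g_I = \gcd(d_I, m)$ come out as $\prod_{i \in I} p_i^{e_i}$ (matching the notation section), and one must justify that $a_I$ is only needed modulo $m/g_I$ — i.e. that the product $a_I g_I$ is well-defined mod $m$ and that $g_I$ is indeed invertible there. Since Hewitt–Zuckerman's proof (cited as \cite[Theorem 2.2]{hewitt}) is already algebraic, the value added here would be making the CRT-coordinate picture and the explicit representative completely transparent, which is exactly what the subsequent sections of the paper rely on.
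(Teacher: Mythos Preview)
Your argument is correct and complete: the CRT reduction to prime-power factors, the classification of idempotents in $\Z/p^e\Z$ via $d(d-1)\equiv 0$, and the reconstruction of the explicit representative $a_I g_I$ with $a_I \equiv g_I^{-1}\pmod{m/g_I}$ are all sound, and you handle the well-definedness and the converse direction cleanly.

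There is, however, nothing to compare against: the paper does not supply its own proof of Theorem~\ref{idem_struc}. It is stated with a citation to \cite[Theorem~2.2]{hewitt} and left unproved; the subsequent Corollary~\ref{num_idem} merely says ``This follows easily from the proof of Theorem~\ref{idem_struc}.'' So your write-up is not an alternative to the paper's proof but rather fills a gap the paper deliberately leaves to the reference. Your CRT approach is the standard one and is entirely in the number-theoretic spirit the authors advertise elsewhere; it would slot in without friction, and indeed the $2^r$ count of Corollary~\ref{num_idem} drops out immediately from your $0/1$-vector description.
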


Theorem \ref{idem_struc} proves the fact from Section \ref{notation}, that the idempotent $d_I$ is determined by $I$. 

\begin{corollary}\cite[Theorem 2.2]{hewitt}
	There are $2^r$ idempotent elements in $\Zm$.
	\label{num_idem}
\end{corollary}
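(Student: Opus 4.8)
The plan is to read off the count directly from Theorem \ref{idem_struc} by exhibiting a bijection between the power set of $R$ and the set of idempotents of $\Zm$. Since $|R| = r$, its power set has $2^r$ elements, so any such bijection forces there to be exactly $2^r$ idempotents. Concretely, the map to analyze is $I \mapsto d_I = a_I g_I \pmod m$, where $g_I = p_{i_1}^{e_{i_1}} \cdots p_{i_s}^{e_{i_s}}$ and $a_I \equiv g_I^{-1} \pmod{m/g_I}$ as in Section \ref{notation}.

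Surjectivity of this map is exactly the content of Theorem \ref{idem_struc}: every idempotent of $\Zm$ is of the form $a_I g_I$ for some $I \subseteq R$. So the work is to verify injectivity, i.e.\ that $I \ne J$ implies $d_I \ne d_J$. For this I would recover $I$ from $d_I$: since $a_I$ is a unit modulo $m/g_I$, we have $\gcd(d_I, m) = \gcd(a_I g_I, m) = g_I$, and then $I = \{\, i \in R : p_i \mid g_I \,\}$ is determined by $g_I$, hence by $d_I$. Therefore distinct subsets give distinct idempotents, the map is a bijection, and the count follows.

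A self-contained alternative, which I might include as a remark, is to apply the Chinese Remainder Theorem: $\Zm \cong \prod_{i=1}^r \Z/p_i^{e_i}\Z$, and an element is idempotent if and only if each coordinate is idempotent in the corresponding factor. Each $\Z/p_i^{e_i}\Z$ is a local ring and so has only the trivial idempotents $0$ and $1$; thus idempotents of $\Zm$ are in bijection with $\{0,1\}^r$, giving $2^r$ of them, and this bijection matches each idempotent $d_I$ with the indicator vector of $I$. There is no real obstacle here — the only point requiring a moment's care is the coordinate-recovery (injectivity) step, i.e.\ that $g_I = \gcd(d_I,m)$; everything else is bookkeeping against Theorem \ref{idem_struc}.
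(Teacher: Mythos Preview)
Your proposal is correct and takes essentially the same approach as the paper: the paper's proof is a one-line deferral (``This follows easily from the proof of Theorem~\ref{idem_struc}''), and your bijection $I\mapsto d_I$ is precisely the content one extracts from that theorem. Your injectivity step via $\gcd(d_I,m)=g_I$ is already recorded as a standing fact in Section~\ref{notation}, so the paper regards it as known; your CRT alternative is a fine remark but not a genuinely different route.
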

\begin{proof}
	This follows easily from the proof of Theorem \ref{idem_struc}(\cite[Theorem 2.2]{hewitt}).
\end{proof}

\section{Structure of the connected components}
We discuss here the structure of the connected components of the graph of orbits. As mentioned above, not all elements of the component identified with $d$ need have the form $du$. In other words, $dU$ need not comprise the entire connected component. Those elements of the connected component that are not in $dU$ form tails.

The authors in \cite{chakrabarty} define the power graph $g(\Zm)$ to be the graph with vertex set $\Zm$. Two vertices $a,b$ are connected by an undirected edge if and only if $a \ne b$ and $a^k \equiv b \pmod{m}$ or $b^k \equiv a \pmod{m}$ for some $k \in \N$. While the graph they define is different than our sequential power graph, the vertices that form each connected component are the same. Therefore we use their structure theorem for the vertex set below. 

\begin{theorem}\cite[Theorem 3.9]{chakrabarty}
	\label{comp_struc}
	Let $I \subseteq R$. The component $C_I$ determined by $I$ is of the form
	$$
	C_I \cong p_{i_1} \left(\Z /p_{i_1}^{e_1} \Z\right) \times p_{i_2}\left( \Z / p_{i_2}^{e_2}\Z\right) \times \cdots \times p_{i_s}\left(\Z / p_{i_s}^{e_s} \Z\right) \times U_n
	$$
	where $n = \frac{m}{g_I}$ and $|C_S| = \frac{\phi(m)}{\phi(\pi_I)}.$ \end{theorem}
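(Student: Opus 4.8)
The plan is to reduce to prime powers via the Chinese Remainder Theorem and then multiply the resulting pieces back together. Under the isomorphism $\Zm \cong \prod_{i=1}^r \Z/p_i^{e_i}\Z$, modular exponentiation is carried out coordinatewise, so each coordinate projection $\Zm \to \Z/p_i^{e_i}\Z$ is a monoid homomorphism that maps edges of the sequential power graph to edges, and hence maps connected elements to connected elements. I would first dispose of the prime-power case: in $\Z/p^e\Z$ every non-unit is nilpotent, since $p \mid a$ forces $a^e \equiv 0 \pmod{p^e}$, so the orbit of a non-unit terminates at $0$, while the orbit of a unit never leaves $(\Z/p^e\Z)^\times$ and terminates at $1$. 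Hence $\Z/p^e\Z$ has exactly two connected components: the unit component $(\Z/p^e\Z)^\times$, with idempotent $1$, and the component $p(\Z/p^e\Z) = \{0, p, 2p, \ldots, (p^{e-1}-1)p\}$ of all multiples of $p$, with idempotent $0$ and cardinality $p^{e-1}$.

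Next I would show that the connected component of a tuple $(a_1, \ldots, a_r)$ is exactly the product of the connected components of its coordinates. The containment in the product is immediate from the previous paragraph. For the reverse containment, I would use that every orbit eventually enters a cycle and that the identity of this terminal cyclic group is an idempotent which is itself a power of the starting element; thus every element of $\Zm$ is connected to an idempotent. Reading this coordinatewise shows that a tuple whose non-unit coordinates are precisely those indexed by a set $I \subseteq R$ is connected to the idempotent that is $0$ on the coordinates in $I$ and $1$ elsewhere, i.e.\ to $d_I$ of Theorem \ref{idem_struc}; since any other tuple with the same \emph{non-unit support} $I$ is also connected to $d_I$, transitivity of connectivity puts all such tuples in one component. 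Combining the two paragraphs, the components of $\Zm$ are indexed by the subsets $I \subseteq R$, with $C_I$ equal to the product of the non-unit components $p_i(\Z/p_i^{e_i}\Z)$ over $i \in I$ and the unit components $(\Z/p_i^{e_i}\Z)^\times$ over $i \notin I$; by CRT the latter product is $(\Z/n\Z)^\times = U_n$ with $n = m/g_I$, which is the displayed isomorphism.

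The cardinality is then bookkeeping: $|C_I| = \prod_{i \in I} p_i^{e_i-1} \cdot \phi(n) = \prod_{i \in I} p_i^{e_i-1} \cdot \prod_{i \notin I} p_i^{e_i-1}(p_i-1)$, whereas $\phi(m) = \prod_{i=1}^r p_i^{e_i-1}(p_i-1)$ and $\phi(\pi_I) = \prod_{i \in I}(p_i-1)$ since $\pi_I$ is squarefree, so dividing gives $\phi(m)/\phi(\pi_I) = \prod_{i \in I} p_i^{e_i-1} \cdot \prod_{i \notin I} p_i^{e_i-1}(p_i-1) = |C_I|$.

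I expect the main obstacle to be the reverse containment in the product-of-components step: one must verify that a single power of $(a_1, \ldots, a_r)$ simultaneously reaches the terminal cycle in every coordinate — any exponent larger than all the coordinatewise tail lengths and divisible by all the relevant cycle lengths will do — and that the common idempotent reached is exactly $d_I$ rather than some other idempotent with a different non-unit support. Everything else, namely the prime-power dichotomy and the $\phi$ arithmetic, is routine once the CRT reduction is set up; this also re-proves the vertex-set statement of \cite[Theorem 3.9]{chakrabarty} directly for the sequential power graph.
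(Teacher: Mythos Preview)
Your argument is correct. The CRT reduction to prime powers, the unit/non-unit dichotomy in $\Z/p^e\Z$, and the observation that every tuple is connected via its own orbit to the idempotent $d_I$ determined by its non-unit support together give exactly the vertex set and cardinality claimed. The ``obstacle'' you flag is not a real obstacle: any exponent $N$ exceeding $\max_i e_i$ and divisible by $\mathrm{lcm}$ of the cycle lengths (for instance $N$ a large multiple of $\phi(m)$) lands every coordinate at its idempotent simultaneously, so $(a_1,\ldots,a_r)^N = d_I$ on the nose.

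The paper takes a different route. It does not prove Theorem~\ref{comp_struc} at the point where it is stated---it is quoted from \cite{chakrabarty}---but it \emph{recovers} it at the end of Section~5.5 by a decomposition that is orthogonal to yours. Rather than splitting $\Zm$ across primes via CRT, the paper fixes the component $C_I$ and splits it into the non-tail part $d_IU$ (shown to be $\{g_I x : \gcd(x,m/g_I)=1\}$ with structure $(\Z/(m/g_I)\Z)^\times$) and the tail part, which is further partitioned into sets $y\pi_I U$ indexed by proper divisors $y$ of $g_I/\pi_I$. Summing the sizes via $\sum_{y\mid g_I/\pi_I}\phi(m/(y\pi_I)) = \phi(m)/\phi(\pi_I)$ then gives the cardinality. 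Your CRT argument reaches the product description in one stroke and is cleaner for this single statement; the paper's tail/non-tail decomposition is longer but is the point of the paper---it yields the explicit description of which elements are tails, the layer structure $T_{\pi_I y}$, and the later sum formulas, none of which fall out of the CRT picture without additional work.
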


Further, we can explicitly describe the elements in each component $C_I$.

\begin{prop}
	$C_I = \{\pi_Ix \pmod{m} \, : \,  \text{gcd}(x, \frac{m}{g_I}) = 1\}$.
\end{prop}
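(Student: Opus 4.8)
The plan is to prove the set equality $C_I = \{\pi_I x \pmod{m} : \gcd(x, m/g_I) = 1\}$ by showing the two inclusions, leaning on the vertex-set description from Theorem~\ref{comp_struc} together with the CRT decomposition of $\Zm$ across the prime powers $p_j^{e_j}$. First I would set up notation: write $\Zm \cong \prod_{j=1}^r \Z/p_j^{e_j}\Z$ via the Chinese Remainder Theorem, so each element of $\Zm$ is a tuple $(x_1, \dots, x_r)$. The key observation is that membership in a connected component is detected coordinatewise: an element $a = (a_1, \dots, a_r)$ lies in $C_I$ precisely when, in each coordinate $j$, the entry $a_j$ lies in the ``component'' of $\Z/p_j^{e_j}\Z$ dictated by whether $j \in I$. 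Concretely, Theorem~\ref{comp_struc} tells us $C_I \cong \prod_{j \in I} p_j(\Z/p_j^{e_j}\Z) \times \prod_{j \notin I} (\Z/p_j^{e_j}\Z)^\times$, since $U_n \cong \prod_{j \notin I}(\Z/p_j^{e_j}\Z)^\times$ when $n = m/g_I = \prod_{j \notin I} p_j^{e_j}$.

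For the inclusion $\supseteq$: take $x$ with $\gcd(x, m/g_I) = 1$ and consider $\pi_I x \pmod m$. In the CRT coordinate $j$, we have $(\pi_I x)_j = (\pi_I)_j \cdot x_j$. When $j \notin I$, $p_j \nmid \pi_I$ and $p_j \nmid x$ (since $p_j \mid m/g_I$), so $(\pi_I x)_j$ is a unit mod $p_j^{e_j}$. When $j \in I$, $p_j \| \pi_I$ in the sense that $p_j \mid \pi_I$ but $p_j^2 \nmid \pi_I$; hence $(\pi_I x)_j$ is a nonzero multiple of $p_j$ in $\Z/p_j^{e_j}\Z$, i.e. lies in $p_j(\Z/p_j^{e_j}\Z)$ — and one must check it is nonzero there, which holds because the $\pi_I$ factor contributes exactly one power of $p_j$ while everything else in that coordinate is a unit, so the $p_j$-adic valuation is exactly $1 < e_j$ (or $=e_j$ giving $0$ only in the degenerate case $e_j = 1$, where $p_j(\Z/p_j\Z) = \{0\}$ and the claim still matches). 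Thus $\pi_I x \in C_I$ by the coordinatewise characterization.

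For the inclusion $\subseteq$: take $a \in C_I$. By Theorem~\ref{comp_struc} its coordinate $a_j$ is a unit mod $p_j^{e_j}$ for $j \notin I$, and lies in $p_j(\Z/p_j^{e_j}\Z)$ for $j \in I$, meaning $v_{p_j}(a_j) \geq 1$. I would then produce the witness $x$: define $x$ coordinatewise by $x_j = a_j / (\pi_I)_j$ for $j \in I$ (legitimate since $p_j \mid a_j$ and $(\pi_I)_j = p_j$ up to a unit factor from the other primes — actually in coordinate $j$, $\pi_I \equiv p_j \cdot(\text{unit})\pmod{p_j^{e_j}}$ when $j \in I$) and $x_j = a_j/(\pi_I)_j$ for $j \notin I$ (here $(\pi_I)_j$ is already a unit). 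One checks $\pi_I x \equiv a$ and that $x_j$ is a unit mod $p_j^{e_j}$ for every $j \notin I$ — for $j \in I$ the value of $x_j$ is unconstrained but irrelevant, since $\gcd(x, m/g_I)$ only sees the primes $p_j$ with $j \notin I$. Hence $\gcd(x, m/g_I) = 1$, as required.

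The main obstacle I anticipate is not the algebra but pinning down precisely what $p_j(\Z/p_j^{e_j}\Z)$ means as a subset and handling the boundary case $e_j = 1$ cleanly (there the ``component'' factor is the zero ideal, and $\pi_I x$ genuinely vanishes in that coordinate), so that the coordinatewise description of $C_I$ in Theorem~\ref{comp_struc} matches the explicit formula on the nose. A secondary subtlety is justifying that component membership really is coordinatewise — this follows because the orbit of a tuple under exponentiation is the tuple of coordinatewise orbits, so two elements lie in the same connected component iff they do in each coordinate; I would state this as a short lemma or fold it into the argument. Once these points are nailed down, both inclusions are immediate from tracking $p_j$-adic valuations through the CRT isomorphism.
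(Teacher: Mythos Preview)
Your approach is correct but takes a genuinely different route from the paper. The paper never invokes Theorem~\ref{comp_struc} or CRT coordinates; instead it uses only the characterization of $C_I$ as the set of roots of the idempotent $d_I$. For $\supseteq$, it argues that any $v=\pi_I x$ with $\gcd(x,m/g_I)=1$ has some power equal to an idempotent, and that this idempotent must be $d_I$ by inspecting which primes of $m$ divide $v$. For $\subseteq$, it takes $w\in C_I$, notes $w^\ell=d_I$ forces $p_i\mid w$ for $i\in I$ and $p_j\nmid w$ for $j\notin I$, then factors $w=p_{i_1}^{j_1}\cdots p_{i_s}^{j_s}y$ and rewrites this as $\pi_I\cdot(p_{i_1}^{j_1-1}\cdots p_{i_s}^{j_s-1}y)$. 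Your CRT argument trades this elementary divisibility reasoning for a cleaner coordinatewise bookkeeping, at the cost of leaning on the cited structure theorem; the paper's version is more self-contained.

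One point in your write-up should be corrected: for $j\in I$ you do \emph{not} need $(\pi_I x)_j$ to be nonzero, and in fact your claimed justification (``everything else in that coordinate is a unit'') is false, since the hypothesis $\gcd(x,m/g_I)=1$ places no restriction on $v_{p_j}(x)$ for $j\in I$. The element $(\pi_I x)_j$ may well be $0$ in $\Z/p_j^{e_j}\Z$, but that is fine because $p_j(\Z/p_j^{e_j}\Z)$ is simply the set of multiples of $p_j$ and contains $0$. Drop the nonzero discussion entirely and the inclusion $\supseteq$ goes through immediately.
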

\begin{proof}
	Recall that there is a unique idempotent element in every cycle. If $v = \pi_I x$, then $v^k =d$ in $\Zm$ for some $k$. Therefore $\pi_I^k x^k = d$. Since
	the only primes dividing $\pi_I$ are those with index set $I$ and since $\text{gcd}(x^k , \frac{m}{g_I}) =1$, then $d= d_I$. Now suppose there were some other element $w \in C_I$ not of the form $\pi_Ix$. Since $w^\ell = d_I$ for some $\ell \in \N$, we see that the only primes dividing $w$ are the $p_i$ such that $i \in I$. Therefore we can write $w  = p_{i_1}^{j_1} \cdots p_{i_s}^{j_s} y$ where $I = \{i_1, \ldots, i_s\}$, $j_k \in \N$, and $\text{gcd}(y, \frac{m}{g_I})= 1$. Notice we can express $w = p_{i_1} \cdots p_{i_s} x = \pi_I x$ with $ x =p_{i_1}^{j_1-1} \cdots p_{i_s}^{j_s-1} y$ satisfying the condition $\text{gcd}(m , \frac{m}{g_I}) = 1$.
\end{proof}

\subsection{Structure of all subcomponents $dU$ ($d$ an idempotent)}
We discuss here idempotent multiples of the group of units $U$. Much of the structure of the sequential power graphs corresponding to $dU$ is determined by $U$.  As we will show in Section \ref{unique_dU}, each of these is in a different connected component. 

Note that $dU$ may but need not comprise the entire connected component. We discuss the case where $dU$ is strictly contained in its component, which thus has at least one tail, in Section \ref{tails}.

Throughout this section, let $U = \big(\Zm)^\times$ and let $d\in \Zm$ be idempotent.

\subsubsection{Structure of $dU$}
\begin{prop} \label{dU_connected} $dU$ is connected. \end{prop}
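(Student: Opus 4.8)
The plan is to show that every element $du \in dU$ lies in the same connected component as the idempotent $d$ itself, since $d = d\cdot 1 \in dU$ and connectedness of a set is witnessed by all of its points being joined (in the ambient sequential power graph) to a single fixed vertex. So it suffices to exhibit, for an arbitrary unit $u \in U$, an (undirected) path in the graph from $du$ to $d$. The natural candidate is the orbit of $du$ under modular exponentiation: the vertices $du, (du)^2, (du)^3, \ldots$ are consecutively joined by edges by the very definition of the sequential power graph, so the whole orbit of $du$ is connected, and it suffices to check that $d$ appears in that orbit — equivalently, that $d$ is the idempotent sitting in the cycle of $du$.

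The key steps, in order, would be: (1) Observe $\gcd(du, m)$ — write $d = a_I g_I$ with $g_I = \gcd(d,m)$ as in the notation of Section \ref{notation}; since $u$ is a unit and $d$ is idempotent, $\gcd(du, m) = \gcd(d,m) = g_I$, so $du$ has exactly the same prime divisors (namely the $p_i$, $i \in I$) as $d$. (2) By the Proposition just proved, $C_I = \{\pi_I x \pmod m : \gcd(x, m/g_I) = 1\}$; since $du$ has precisely the prime factors $\{p_i : i \in I\}$ and $du/(\text{powers of those primes})$ is coprime to $m/g_I$, we get $du \in C_I$, and moreover $d = d_I \in C_I$. (3) Because every orbit contains a unique idempotent (its cycle element), and the idempotent of any element of $C_I$ is $d_I$ (this is exactly the argument in the proof of the preceding Proposition: $(du)^k \to$ an idempotent whose prime divisors are among $\{p_i : i\in I\}$ and whose cofactor is a unit mod $m/g_I$, forcing it to equal $d_I = d$), the orbit of $du$ contains $d$. (4) Conclude: the orbit $du, (du)^2, \ldots$ is a connected subgraph containing both $du$ and $d$, hence $du$ and $d$ lie in the same component; as $du$ was arbitrary, all of $dU$ lies in the component of $d$, so $dU$ is connected.

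The main obstacle — really the only nontrivial point — is step (3): verifying that the idempotent in the cycle of $du$ is genuinely $d$ and not some other idempotent. This rests on the fact that for a power $(du)^k \equiv e \pmod m$ with $e$ idempotent, the set of primes dividing $e$ is stable under taking powers and equals the set of primes dividing $du$ (which is $\{p_i : i \in I\}$), together with the uniqueness clause in Theorem \ref{idem_struc} that an idempotent is determined by its prime support $I$. One should be a little careful that the orbit as defined starts at $du$ (exponent $1$), so if $du$ itself already lies in its own cycle there is nothing to prove, and otherwise $d$ appears later along the tail-then-cycle; either way $d$ is in $\{(du)^j : j \ge 1\}$, which is all we need. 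Everything else is bookkeeping with gcd's and the explicit description of $C_I$ already established above.
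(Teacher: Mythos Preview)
Your proof is correct, but it takes a substantially more indirect route than the paper's. The paper exploits the idempotent identity directly: if $k$ is the multiplicative order of $u$, then $(du)^j = d^j u^j = d u^j$ for every $j\ge 1$ (since $d^2=d$), and in particular $(du)^k = d u^k = d$. Thus the orbit $du,(du)^2,\dots,(du)^k=d$ is already a path from $du$ to $d$, with no need to identify the ambient component $C_I$, compute $\gcd$'s, or invoke Theorem~\ref{idem_struc}. Your step~(3), which you flag as the only nontrivial point, becomes a one-line computation in the paper's argument. What your approach buys is that it situates $dU$ inside the explicit description of $C_I$ from the start, which is conceptually in line with the rest of the section; what the paper's approach buys is a completely self-contained proof using nothing beyond $d^2=d$ and the finiteness of the order of a unit, and it additionally shows that the orbit of $du$ is a genuine cycle (no tail) of length dividing $\operatorname{ord}(u)$.
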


\begin{proof}
	Let $u$ be any unit, and let $d$ be an idempotent. Let $k$ be the order of $u$, so $u^1, u^2, \ldots , u^k=1$ is the associated cycle. Then $(du)^1, (du)^2, \ldots , (du)^k$ is the same sequence as $du^1, du^2, \ldots , du^k=d$ and so a cycle, with $d$ as its idempotent.  As in the unit case, all of the $du$ are connected by a path to $d$, and thus form a subset of their connected component. 
\end{proof}

\begin{prop}\cite[Theorem 2.3]{schwarz}\label{dU_mult_group}
	$dU$ is a multiplicative group, with identity $d$.
\end{prop}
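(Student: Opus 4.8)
The plan is to verify the group axioms directly for the set $dU = \{du \pmod{m} : u \in U\}$ under the multiplication it inherits from $\Zm$, using only two inputs: that $d$ is idempotent ($d^2 \equiv d \pmod m$) and that $U$ is a group. Equivalently, one observes that the map $\varphi \colon U \to \Zm$ given by $\varphi(u) = du$ is multiplicative, since $\varphi(u_1)\varphi(u_2) = du_1 \cdot du_2 = d^2 u_1 u_2 = d(u_1 u_2) = \varphi(u_1 u_2)$, so its image is exactly $dU$, and the image of a group under a multiplicative map is again a group. I would, however, spell out the axioms, since each check is a one-liner.

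First I would check closure: for $u_1, u_2 \in U$, we have $(du_1)(du_2) = d^2 u_1 u_2 = d(u_1 u_2) \in dU$, because $d^2 \equiv d \pmod m$ and $u_1 u_2 \in U$. Associativity is inherited from the associative, commutative multiplication of $\Zm$. For the identity, note that $d = d \cdot 1 \in dU$, and for every $du \in dU$ we have $(du)d = d^2 u = du = d(du)$, so $d$ is a two-sided identity for the restricted operation. Finally, for inverses, given $du \in dU$ let $u^{-1}$ be the inverse of $u$ in $U$; then $du^{-1} \in dU$ and $(du)(du^{-1}) = d^2 u u^{-1} = d$, so $du^{-1}$ is the inverse of $du$ inside $dU$. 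Together these four facts give that $dU$ is a group with identity $d$.

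There is no genuinely hard step here; the only thing requiring care is conceptual rather than computational. The identity of the group $dU$ is $d$, not the ring identity $1$ of $\Zm$ (unless $d = 1$, i.e.\ unless we are in the unit component $U$ itself), so one must confirm that $d$ acts as an identity on the set $dU$ and that the inverses produced stay within $dU$ — both of which are immediate from $d^2 \equiv d$. It is worth recording that, combined with Proposition \ref{dU_connected}, this shows $dU$ sits inside a single connected component with $d$ as its idempotent; later sections determine when $dU$ exhausts that component and when it is a proper subset whose complement consists of tail elements.
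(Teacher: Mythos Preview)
Your proof is correct and follows essentially the same approach as the paper's: both verify closure via $(du)(dv)=d^2uv=d(uv)$ and inverses via $(du)(du^{-1})=d$, with the idempotence of $d$ doing all the work. Your version is simply more explicit, spelling out the identity and associativity axioms that the paper leaves implicit.
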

\begin{proof}
	If $du, dv\in dU$, then $dudv = d^2 uv \equiv duv \pmod{m}$. Therefore $dU$ is closed under multiplication. Further, $(du)(du^{-1}) = d^2 uu^{-1} \equiv d \pmod{m}$, so inverses exist. 
\end{proof}

The next proposition is due to Schwarz, who proved this using algebraic techniques. We record a number-theoretic proof here.
\begin{prop} \cite[Corollary 4.1]{schwarz} Let $g = \text{gcd}(d,m)$. Then $|dU| = \phi(\frac{m}{g}).	$
	\label{size_dU}
\end{prop}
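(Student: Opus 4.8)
The plan is to realize $dU$ as a quotient of the unit group $U=(\Zm)^\times$ and then count. By Proposition~\ref{dU_mult_group}, $dU$ is a group with identity $d$, so the map
\[
\psi \colon U \longrightarrow dU, \qquad \psi(u) = du \pmod{m},
\]
is well defined, is surjective (its image is $dU$ by definition of $dU$), and is a group homomorphism, because $d^2\equiv d$ gives $\psi(u)\psi(v)=(du)(dv)=d^2uv\equiv d(uv)=\psi(uv)$. Hence $|dU| = |U|/|\ker\psi| = \phi(m)/|\ker\psi|$, and the whole problem reduces to identifying and counting $\ker\psi = \{u\in U : du\equiv d \pmod m\}$.

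Next I would pin this kernel down number-theoretically. Writing $g=\gcd(d,m)$, the condition $du\equiv d\pmod m$ means $m\mid d(u-1)$, i.e.\ $\tfrac{m}{g}\mid \tfrac{d}{g}(u-1)$; since $\gcd\!\big(\tfrac{d}{g},\tfrac{m}{g}\big)=1$, this is equivalent to $\tfrac{m}{g}\mid u-1$. Therefore
\[
\ker\psi = \Big\{\, u \in (\Zm)^\times \;:\; u \equiv 1 \pmod{\tfrac{m}{g}} \,\Big\}.
\]

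Finally I would count $\ker\psi$. One clean route: $\ker\psi$ is precisely the kernel of the reduction homomorphism $(\Zm)^\times \to \big(\Z/\tfrac{m}{g}\Z\big)^\times$, which is surjective (any residue coprime to $\tfrac{m}{g}$ lifts, by CRT, to a residue coprime to $m$), so $|\ker\psi| = \phi(m)/\phi\!\big(\tfrac{m}{g}\big)$ and $|dU|=\phi\!\big(\tfrac{m}{g}\big)$, as claimed. Alternatively, and more in the explicit spirit of the paper, one counts directly: by Theorem~\ref{idem_struc} the idempotent $d$ has $g=g_I=\prod_{i\in I}p_i^{e_i}$ a unitary divisor of $m$, so $\gcd\!\big(g,\tfrac{m}{g}\big)=1$; the elements of $\ker\psi$ are those among $t\tfrac{m}{g}+1$, $t=0,\dots,g-1$, that are coprime to $m$, and for each prime $p\mid g$ exactly one residue class of $t$ mod $p$ is excluded, giving $g\prod_{p\mid g}\!\big(1-\tfrac1p\big)=\phi(g)$ of them; then $|dU|=\phi(m)/\phi(g)=\phi\!\big(\tfrac{m}{g}\big)$ since $\phi$ is multiplicative.

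The main obstacle is the middle step: correctly extracting from $du\equiv d\pmod m$ the clean condition $u\equiv 1\pmod{m/g}$ — which rests on the elementary but essential identity $\gcd(d/g,m/g)=1$ — together with the count of units in that residue class, which is where either the surjectivity of the reduction map or, equivalently, the unitary-divisor structure of $g$ from Theorem~\ref{idem_struc} does the real work.
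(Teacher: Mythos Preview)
Your proof is correct and follows essentially the same route as the paper: both reduce the congruence $du\equiv dv\pmod m$ (respectively $du\equiv d$) to $u\equiv v\pmod{m/g}$ via the key fact $\gcd(d/g,m/g)=1$, and then use that the units mod $m$ surject onto the units mod $m/g$. The only cosmetic difference is that you package this as a kernel computation for the homomorphism $u\mapsto du$, whereas the paper counts equivalence classes among the $du_i$ directly; your version is arguably cleaner in that it makes the surjectivity step explicit rather than leaving it implicit.
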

\begin{proof}
	Let $U = \{u_1, \ldots, u_{\phi(m)}\}$. Consider the elements $\{du_1, \ldots, du_{\phi(m)}\}$. Two elements of this set are the same  in $\Zm$ if and only if $du_i \equiv du_j \pmod{m}$ if and only if $\frac{d}{g} u_i \equiv \frac{d}{g} u_j \pmod{\frac{m}{g}}$. Since $g = \text{gcd}(d,m)$ then $\text{gcd}(\frac{d}{g}, \frac{m}{g}) = 1.$ Therefore we can multiply both sides by the inverse of $\frac{d}{g}$ to obtain the expression $u_i \equiv u_j \pmod{\frac{m}{g}}$.  All of these implications are reversible, so $du_i \equiv du_j \pmod{m}$ if and only if $u_i \equiv u_j \pmod{\frac{m}{g}}$. Since $\text{gcd}(u_k,m) =1$ then $\text{gcd}(u_k, \frac{m}{g}) =1$ for all $1\leq k \leq \phi(m)$. Therefore we get distinct elements in $dU$ for every congruence class corresponding to a unit modulo $\frac{m}{g}.$ There are $\phi(\frac{m}{g})$ units modulo $\frac{m}{g}$ and thus $|dU| = \phi(\frac{m}{g}).$
\end{proof}

\begin{theorem} \cite[Lemma 4.3]{schwarz}
	\label{dU_struc}
	Let $d_I$ be an idempotent and let $ J = R \setminus I = \{j_1, \ldots, j_s\}$. Then $d_IU\cong (\Z / p_{j_1}^{e_{j_1}}\Z)^\times \times (\Z/ p_{j_2}^{e_{j_2}}\Z)^\times\times \cdots \times (\Z/ p_{j_s}^{e_{j_s}}\Z)^\times  
	$.
\end{theorem}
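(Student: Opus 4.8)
The plan is to combine the size computation from Proposition \ref{size_dU} with the explicit description of $d_IU$ as a subset of $\Zm$ via the Chinese Remainder Theorem. First I would recall that $g_I = \gcd(d_I, m) = p_{i_1}^{e_{i_1}}\cdots p_{i_s}^{e_{i_s}}$ and that $n := m/g_I = p_{j_1}^{e_{j_1}}\cdots p_{j_s}^{e_{j_s}}$ is coprime to $g_I$. By Proposition \ref{dU_mult_group}, $d_IU$ is a group with identity $d_I$, and by Proposition \ref{size_dU} it has order $\phi(n) = \prod_{k}\phi(p_{j_k}^{e_{j_k}})$. So the statement amounts to identifying this group, up to isomorphism, with $\prod_k (\Z/p_{j_k}^{e_{j_k}}\Z)^\times$.

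The cleanest route is to build an explicit isomorphism. Define $\Phi : d_IU \to (\Z/n\Z)^\times$ by $\Phi(d_Iu) = u \bmod n$. I would check this is well-defined and injective using exactly the congruence chain in the proof of Proposition \ref{size_dU}: $d_Iu \equiv d_Iv \pmod m$ iff $u \equiv v \pmod{n}$, since $\gcd(d_I/g_I, n) = 1$ and we may cancel $d_I/g_I$. It is a homomorphism because $(d_Iu)(d_Iv) = d_I(uv)$ in $\Zm$ (using $d_I^2 \equiv d_I$), and $\Phi$ sends this to $uv \bmod n$. Surjectivity follows because $U = (\Zm)^\times$ surjects onto $(\Z/n\Z)^\times$ (reduction mod a divisor is surjective on unit groups, e.g. by CRT lifting), and $|d_IU| = \phi(n) = |(\Z/n\Z)^\times|$ confirms the counts match. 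Finally, $(\Z/n\Z)^\times \cong \prod_{k=1}^{s}(\Z/p_{j_k}^{e_{j_k}}\Z)^\times$ is the standard CRT decomposition, since the $p_{j_k}^{e_{j_k}}$ are pairwise coprime with product $n$.

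The only genuinely delicate point is the surjectivity of $U \to (\Z/n\Z)^\times$, i.e. that every unit mod $n$ lifts to a unit mod $m$; this is routine (given a residue coprime to $n$, use CRT to choose a simultaneous residue that is $1$ modulo $g_I$, hence coprime to $m$), but it is the step one must not skip. Everything else is bookkeeping: well-definedness/injectivity is a verbatim reuse of Proposition \ref{size_dU}'s argument, the homomorphism property is immediate from $d_I$ being idempotent, and the final product decomposition is just CRT. An alternative, slightly slicker framing would be to observe directly that $d_IU$ corresponds under CRT to the subset of $\prod_{i=1}^r \Z/p_i^{e_i}\Z$ that is $\{1\}$ (or rather the idempotent's coordinate, which is $0$) in the coordinates indexed by $I$ and ranges over units in the coordinates indexed by $J = R\setminus I$; reading off the group structure coordinatewise then gives the claim immediately. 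I expect either framing works; I would present the explicit $\Phi$ since it also makes the inverse map transparent, matching the paper's stated emphasis that "the inverse of this map is easy to understand."
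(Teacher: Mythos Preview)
Your argument is correct. Your primary route---building the explicit map $\Phi:d_IU\to(\Z/n\Z)^\times$, checking it is a well-defined bijective homomorphism, and then applying CRT to the target---works as stated, and the one nontrivial step (surjectivity of reduction $U\to(\Z/n\Z)^\times$) you handle correctly. The paper, by contrast, takes exactly the ``slicker'' alternative you mention at the end: it applies CRT to $\Zm$ first, writes $U\cong\prod_i(\Z/p_i^{e_i}\Z)^\times$, and observes that multiplication by $d_I$ kills the $I$-coordinates (since $d_I\equiv 0\pmod{p_i^{e_i}}$ for $i\in I$) and fixes the $J$-coordinates (since $d_I\equiv 1\pmod{m/g_I}$), so $d_IU$ is visibly the product over $J$. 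Your approach buys an explicit isomorphism and makes the inverse transparent; the paper's buys brevity and avoids the lifting step entirely, since surjectivity onto each $J$-factor is automatic once you work inside the CRT product.
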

\begin{proof}
	First, compute $$d_IU=d_I \cdot (\Zm)^\times=d \cdot \left((\Z / p_1^{e_1}\Z)^\times \times (\Z / p_2^{e_2}\Z)^\times\times \cdots \times (\Z / p_r^{e_r}\Z)^\times\right).$$
	Observe that
	$d_I \cdot \Z / p_i\Z\cong \{0\}$ when $i \in I$, so $$d_IU \cong (\Z / p_{j_1}^{e_{j_1}}\Z)^\times \times (\Z /p_{j_2}^{e_{j_2}}\Z)^\times\times \cdots \times (\Z /p_{j_s}^{e_{j_s}}\Z)^\times.$$
\end{proof}

\begin{corollary}
	Let $I = R \setminus \{i\}$. Then $d_IU \cong (\Z/ p_i^{e_i}\Z)^\times$. In particular, $d_IU$ is cyclic if and only if $p_i$ is odd or $p_i=2$ and $e_i = 1,2$.
\end{corollary}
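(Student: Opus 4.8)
The plan is to derive this as an immediate specialization of Theorem~\ref{dU_struc}, followed by a citation of the classical structure theorem for the unit group of a prime power.

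First I would take $I = R \setminus \{i\}$, so that $J = R \setminus I = \{i\}$ is a singleton. Plugging this into Theorem~\ref{dU_struc}, the product on the right-hand side collapses to a single factor, yielding $d_IU \cong (\Z / p_i^{e_i}\Z)^\times$ directly. No further work is needed for the first assertion.

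For the ``in particular'' clause, I would invoke the well-known classification of cyclic unit groups of prime powers. If $p_i$ is odd, then $(\Z/p_i^{e_i}\Z)^\times$ is cyclic (existence of primitive roots modulo odd prime powers; see e.g.\ \cite{ireland}). If $p_i = 2$, the relevant cases are already visible in Theorem~5.1: for $e_i = 1$ one has the trivial group, for $e_i = 2$ one has $\Z/2\Z$, and for $e_i > 2$ one has $\Z/2\Z \times \Z/2^{e_i-2}\Z$, which has exponent $2^{e_i-2} < 2^{e_i-1} = |(\Z/2^{e_i}\Z)^\times|$ and hence is not cyclic. Combining these cases gives exactly the stated dichotomy.

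There is essentially no obstacle here: the statement is a corollary in the literal sense, and the only thing to be careful about is quoting the correct structure theorem for $(\Z/p^e\Z)^\times$ in the prime-power case --- the $p=2$, $e>2$ case being precisely the one where cyclicity fails, which is already recorded in Theorem~5.1 of this paper.
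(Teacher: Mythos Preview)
Your proposal is correct and matches the paper's (implicit) approach: the corollary is stated immediately after Theorem~\ref{dU_struc} with no separate proof, precisely because it is the specialization $J = R \setminus I = \{i\}$ together with the classical structure of $(\Z/p^e\Z)^\times$. One small remark: the unit-group structure theorem you cite as ``Theorem~5.1'' is actually the theorem in Section~\ref{unit_struc} (Section~4), not Section~5, so adjust the cross-reference accordingly.
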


The previous theorem by Schwarz gives an algebraic description of $dU$; we explicitly write down the elements in $dU$ in the following proposition.

\begin{prop} Let $g = \text{gcd}(d,m)$ and let $S =\{gx \pmod{m}\, : \, \text{gcd}(x, \frac{m}{g} )= 1\}$. Then $dU = S$. \end{prop}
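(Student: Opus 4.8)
The plan is to prove the set equality $dU = S$ by double inclusion, where $S = \{gx \bmod m : \gcd(x, \tfrac{m}{g}) = 1\}$ and $g = \gcd(d,m)$. The main tool is the factorization $d = \tfrac{d}{g}\cdot g$ with $\gcd(\tfrac{d}{g}, \tfrac{m}{g}) = 1$, which was already used in the proof of Proposition \ref{size_dU}, together with a counting argument to upgrade one inclusion into equality.

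First I would show $dU \subseteq S$. Take any $du \in dU$ with $u \in U$. Since $g \mid d$, we can write $du = g\cdot(\tfrac{d}{g}u)$. I claim $\gcd(\tfrac{d}{g}u, \tfrac{m}{g}) = 1$: indeed $\gcd(\tfrac{d}{g}, \tfrac{m}{g}) = 1$ because $g = \gcd(d,m)$, and $\gcd(u, \tfrac{m}{g}) = 1$ because $\gcd(u,m) = 1$. Strictly speaking, only the residue of $\tfrac{d}{g}u$ modulo $\tfrac{m}{g}$ matters for membership in $S$, so setting $x \equiv \tfrac{d}{g}u \pmod{\tfrac{m}{g}}$ and lifting to any representative coprime to $\tfrac{m}{g}$ gives $du \equiv gx \pmod m$ with $\gcd(x,\tfrac{m}{g}) = 1$; hence $du \in S$.

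Next I would establish the reverse inclusion, or rather deduce equality by cardinality. The cleaner route: by Proposition \ref{size_dU}, $|dU| = \phi(\tfrac{m}{g})$. On the other hand, $S$ has at most $\phi(\tfrac{m}{g})$ elements, since $gx \equiv gx' \pmod m$ iff $x \equiv x' \pmod{\tfrac{m}{g}}$, so the elements of $S$ are indexed by residues $x \bmod \tfrac{m}{g}$ with $\gcd(x,\tfrac{m}{g}) = 1$, of which there are exactly $\phi(\tfrac{m}{g})$; distinct coprime residues give distinct elements of $S$, so $|S| = \phi(\tfrac{m}{g})$. Combined with $dU \subseteq S$ and $|dU| = |S| < \infty$, this forces $dU = S$. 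Alternatively, for a direct proof of $S \subseteq dU$: given $gx$ with $\gcd(x,\tfrac{m}{g}) = 1$, I want $u \in U$ with $du \equiv gx \pmod m$, i.e. $\tfrac{d}{g}u \equiv x \pmod{\tfrac{m}{g}}$; since $\tfrac{d}{g}$ is invertible mod $\tfrac{m}{g}$ this pins down $u$ modulo $\tfrac{m}{g}$, and one uses CRT (or Dirichlet-type lifting) to choose a representative $u$ that is additionally coprime to $g$, hence to all of $m$, so $u \in U$ and $du \equiv gx$.

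The main obstacle is the lifting step in the direct approach: ensuring a residue class modulo $\tfrac{m}{g}$ contains a representative coprime to the full modulus $m$. This is where one must invoke that $\gcd\!\big(\tfrac{m}{g}, \text{(the radical of $g$ not already dividing $\tfrac{m}{g}$)}\big)$ behaves well, which follows from the prime-power structure of $g = g_I$; concretely, the primes dividing $g$ are exactly those in $I$ and the primes dividing $\tfrac{m}{g}$ are exactly those in $R \setminus I$, so these moduli are coprime and CRT applies cleanly. Because this lifting subtlety is precisely what the cardinality argument sidesteps, I would present the counting proof as the primary argument: it is short, uses only the already-proven Proposition \ref{size_dU} and the elementary observation about when $gx \equiv gx' \pmod m$, and avoids any delicate choice of representatives.
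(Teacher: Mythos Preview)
Your proposal is correct and follows essentially the same approach as the paper: establish $dU \subseteq S$ via the factorization $du = g\cdot(\tfrac{d}{g}u)$ with $\gcd(\tfrac{d}{g}u,\tfrac{m}{g})=1$, then conclude equality by the cardinality count $|dU| = \phi(\tfrac{m}{g}) = |S|$ using Proposition~\ref{size_dU}. The paper does not pursue the alternative CRT lifting you sketch; your instinct that the counting argument is the cleaner primary route matches exactly what the authors chose.
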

\begin{proof}
	Let $d$ be an idempotent element and let $g = \text{gcd}(d,m)$. Fix $u \in U$ and consider $x \equiv \frac{d}{g} u \pmod{\frac{m}{g}}$. Then $x$ is a unit modulo $\frac{m}{g}$ and $gx \equiv d u \pmod{m}$. Thus $du \in S$.  By Proposition \ref{size_dU}, there are exactly $\phi(\frac{m}{g})$ elements in $|dU|$. Further, $|S| = \phi(\frac{m}{g}).$ Since $dU \subset S$ and $|dU| = |S|$, then $dU = S$. 
\end{proof}

\subsubsection{Algebraic structure of the cross product of the $dU$}
In the next section, we consider the cross product of the cycles $d_IU$. This corresponds to looking at the vertex sets of the components of $d_IU \times d_JU$ viewed as the graph product.
\begin{prop}
	Let $d_I$ and $d_{R \setminus I}$ be dual idempotents. Then $d_I U \times  d_{R \setminus I} U \cong U.$
\end{prop}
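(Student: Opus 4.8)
The plan is to prove this by combining the structure theorem for $d_IU$ (Theorem \ref{dU_struc}) with the Chinese Remainder Theorem decomposition of the unit group. Write $m = p_1^{e_1}\cdots p_r^{e_r}$ and let $I \subseteq R$ with complement $R \setminus I$. By Theorem \ref{dU_struc}, $d_IU \cong \prod_{j \in R \setminus I} (\Z/p_j^{e_j}\Z)^\times$, since the factors indexed by $i \in I$ collapse to the trivial group. Applying the same theorem to the dual idempotent $d_{R \setminus I}$ (whose defining index set is $R \setminus (R \setminus I) = I$) gives $d_{R \setminus I}U \cong \prod_{i \in I} (\Z/p_i^{e_i}\Z)^\times$. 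Taking the direct product of these two groups, the index sets $I$ and $R \setminus I$ partition $R$, so
\[
d_I U \times d_{R \setminus I} U \;\cong\; \prod_{j \in R \setminus I}(\Z/p_j^{e_j}\Z)^\times \times \prod_{i \in I}(\Z/p_i^{e_i}\Z)^\times \;\cong\; \prod_{k \in R}(\Z/p_k^{e_k}\Z)^\times \;\cong\; (\Zm)^\times = U,
\]
where the last isomorphism is the Chinese Remainder Theorem.

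The key steps in order: (1) invoke Theorem \ref{dU_struc} for $d_I$ to get the product over $R \setminus I$; (2) observe that the dual idempotent $d_{R \setminus I}$ is, by the notation of Section \ref{notation}, the idempotent determined by the index set $R \setminus I$, so Theorem \ref{dU_struc} applied to it uses complement $R \setminus (R\setminus I) = I$ and yields the product over $I$; (3) reassemble the two products, using that $I \sqcup (R\setminus I) = R$ is a partition; (4) identify the reassembled product with $(\Zm)^\times$ via CRT. Each step is essentially an application of an already-proved result, so the argument is short.

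The only genuine subtlety — really a bookkeeping point rather than an obstacle — is keeping the indexing straight: Theorem \ref{dU_struc} is stated as "for idempotent $d_I$, $d_IU \cong \prod_{j \in R\setminus I}(\cdots)$", so when we feed in $d_{R\setminus I}$ we must be careful that its "$I$" is $R\setminus I$ and hence its "$R \setminus I$" is $I$. Once that is spelled out, the partition $R = I \sqcup (R\setminus I)$ makes the two factors assemble cleanly into the full CRT product. I would also remark that the isomorphism is natural: it is induced by the map $(d_I u,\, d_{R\setminus I} v) \mapsto d_I u + d_{R\setminus I} v \pmod m$, since $d_I + d_{R\setminus I} \equiv 1$ and $d_I d_{R\setminus I} \equiv 0$, so the dual idempotents give precisely the CRT splitting $\Zm \cong \Z/g_I\Z \times \Z/(m/g_I)\Z$ restricted to unit groups — but this explicit form is optional and not needed for the isomorphism claim as stated.
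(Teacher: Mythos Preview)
Your proof is correct and follows essentially the same route as the paper: apply Theorem \ref{dU_struc} to each of $d_I$ and $d_{R\setminus I}$, note that the resulting index sets $R\setminus I$ and $I$ partition $R$, and reassemble into the full CRT decomposition of $U$. Your additional remark about the explicit isomorphism via $d_I u + d_{R\setminus I}v$ is a nice bonus not present in the paper, but the core argument is identical.
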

\begin{proof}
	Recall that 
	$$
	d_I U \simeq (\Z / p_{j_1}^{e_{j_1}}\Z)^\times \times \cdots \times	(\Z / p_{j_s}^{e_{j_s}}\Z)^\times
	$$
	where $\{j_1, \ldots, j_s\} = R \setminus I$ and 
	$$
	d_{R \setminus I} U \simeq (\Z / p_{k_1}^{e_{k_1}}\Z)^\times \times \cdots \times(\Z /p_{k_t}^{e_{k_t}}\Z)^\times
	$$
	where $\{k_1, \ldots, k_t\} = R \setminus (R \setminus I) = I.$ Therefore $d_I U \times d_{R \setminus I} U \simeq U$. 
\end{proof}

\begin{prop}
	Let $d_{R \setminus \{i\}}$ be the top-level idempotents. Then $d_{R \setminus \{1\}} U \times  \cdots \times d_{R \setminus \{r\}} U \simeq U$. 
\end{prop}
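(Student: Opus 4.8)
The plan is to reduce each factor to an explicit unit group via the corollary following Theorem~\ref{dU_struc}, and then reassemble the product using the Chinese Remainder Theorem.

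First I would observe that for the top-level idempotent $d_{R\setminus\{i\}}$ the relevant complementary index set is $R\setminus(R\setminus\{i\})=\{i\}$, so Theorem~\ref{dU_struc} (equivalently, its corollary) gives a group isomorphism $d_{R\setminus\{i\}}U\cong(\Z/p_i^{e_i}\Z)^\times$ for every $i\in R$. Taking the direct product over $i=1,\ldots,r$ then yields
\[
d_{R\setminus\{1\}}U\times\cdots\times d_{R\setminus\{r\}}U\;\cong\;(\Z/p_1^{e_1}\Z)^\times\times\cdots\times(\Z/p_r^{e_r}\Z)^\times .
\]

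Next I would invoke the Chinese Remainder Theorem: since $m=p_1^{e_1}\cdots p_r^{e_r}$ with the $p_i$ distinct, the ring isomorphism $\Zm\cong\prod_{i=1}^{r}\Z/p_i^{e_i}\Z$ restricts on unit groups to $U=(\Zm)^\times\cong\prod_{i=1}^{r}(\Z/p_i^{e_i}\Z)^\times$. Composing this with the previous display gives $d_{R\setminus\{1\}}U\times\cdots\times d_{R\setminus\{r\}}U\cong U$, which is the claim. As a consistency check, comparing orders uses only multiplicativity of Euler's totient: $\prod_{i}\lvert d_{R\setminus\{i\}}U\rvert=\prod_i\phi(p_i^{e_i})=\phi(m)=\lvert U\rvert$, in agreement with Proposition~\ref{size_dU}.

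There is essentially no obstacle: the statement is a bookkeeping corollary of the structure theorem for $dU$ together with the Chinese Remainder Theorem. The only point needing a line of care is to note that, as $i$ ranges over $R$, the complementary singletons $\{i\}$ enumerate every index of $R$ exactly once, so the right-hand side of the first display has precisely one copy of $(\Z/p_i^{e_i}\Z)^\times$ for each $i$ --- matching the CRT decomposition of $U$ factor-for-factor. One could instead try to iterate the preceding proposition $d_IU\times d_{R\setminus I}U\cong U$, but that pairs a set with its complement rather than $r$ complements-of-singletons, so the CRT route is the cleaner one.
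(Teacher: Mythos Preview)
Your proof is correct and follows the same approach as the paper: both rely on the fact that $d_{R\setminus\{i\}}U \cong (\Z/p_i^{e_i}\Z)^\times$ (from Theorem~\ref{dU_struc}) and then assemble the factors via the CRT decomposition of $U$. The paper's version is simply terser, stating only the key isomorphism and leaving the reassembly implicit.
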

\begin{proof}
	This follows from the fact that
	$$
	d_{R \setminus \{i\}}U \simeq (\Z / p_i^{e_i}\Z)^\times.
	$$
\end{proof}

\begin{prop}
	Let $D = \{d_{J_1}, \ldots d_{J_s}\}$ be the set of all idempotents such that $|J_i| = k$ for all $i = 1, \ldots , s$. Then $d_{J_1} U \times \cdots \times d_{J_s} U \simeq \underbrace{U \times \cdots \times U}_{\binom{r-1}{k} \text{ times}}$.
\end{prop}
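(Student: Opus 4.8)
The plan is to reduce everything to the explicit decomposition of $d_JU$ given in Theorem~\ref{dU_struc} and then run a counting argument on how often each local factor $(\Z/p_\ell^{e_\ell}\Z)^\times$ occurs. First I would recall that for any $J\subseteq R$, Theorem~\ref{dU_struc} gives $d_JU\cong\prod_{j\in R\setminus J}(\Z/p_j^{e_j}\Z)^\times$. Since $D$ consists of \emph{all} subsets $J\subseteq R$ with $|J|=k$, we have $s=\binom{r}{k}$, and applying the theorem to each factor yields
\[
d_{J_1}U\times\cdots\times d_{J_s}U\;\cong\;\prod_{\substack{J\subseteq R\\ |J|=k}}\ \prod_{j\in R\setminus J}(\Z/p_j^{e_j}\Z)^\times .
\]

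Next I would rearrange this finite product and collect, for each fixed index $\ell\in R$, all occurrences of the factor $(\Z/p_\ell^{e_\ell}\Z)^\times$. This factor shows up in the block indexed by $J$ precisely when $\ell\in R\setminus J$, i.e.\ once for each $k$-element subset $J$ of $R$ avoiding $\ell$; the number of such subsets is $\binom{r-1}{k}$. The crucial point — essentially the only substantive observation in the argument — is that this count is the same for every $\ell$, so the exponents are uniform across the local factors. Hence
\[
d_{J_1}U\times\cdots\times d_{J_s}U\;\cong\;\prod_{\ell=1}^{r}\Bigl((\Z/p_\ell^{e_\ell}\Z)^\times\Bigr)^{\binom{r-1}{k}}\;\cong\;\Bigl(\prod_{\ell=1}^{r}(\Z/p_\ell^{e_\ell}\Z)^\times\Bigr)^{\binom{r-1}{k}} .
\]

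Finally I would invoke the Chinese Remainder Theorem (already used in Section~\ref{unit_struc} via the structure theorem for $U$), which gives $U=(\Zm)^\times\cong\prod_{\ell=1}^{r}(\Z/p_\ell^{e_\ell}\Z)^\times$, to rewrite the right-hand side as $U^{\binom{r-1}{k}}$, which is the claim. I do not expect any real obstacle: once Theorem~\ref{dU_struc} is available the proof is pure bookkeeping, and the only thing worth flagging is the symmetric counting identity $\#\{J:|J|=k,\ \ell\notin J\}=\binom{r-1}{k}$ for every $\ell$, which is exactly what forces the product to collapse into a single power of $U$. (As a consistency check one can verify $s(r-k)=\binom{r}{k}(r-k)=r\binom{r-1}{k}$, matching the total number of local factors on each side.)
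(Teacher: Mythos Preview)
Your proof is correct and follows essentially the same approach as the paper's: both decompose each $d_{J_i}U$ via Theorem~\ref{dU_struc} and then count how many times each local factor $(\Z/p_\ell^{e_\ell}\Z)^\times$ appears. The only cosmetic difference is that you count $\binom{r-1}{k}$ directly as the number of $k$-subsets avoiding a fixed $\ell$, whereas the paper obtains it by dividing the total number of factors $(r-k)\binom{r}{k}$ by $r$ using symmetry---the identity you included as your consistency check.
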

\begin{proof}
	Each $d_{J_i} U$ is a cross product of $r -k$ groups of the form $(\Z / p_i^{e_i}\Z)^\times$ for some prime dividing $m$. Further there are $\binom{r}{k}$ idempotents in the set $D$. Thus there are a total of $(r-k) \binom{r}{r-k}$ groups in the resulting cross-product and each of the $r$ primes are used the same number of times. Therefore each group in the cross product of $U$ appears
	$$
	\frac{\binom{r}{k}(r-k)}{r} =  \binom{r-1}{k}
	$$
	times.
	
\end{proof}

\begin{prop}
	Let $J = \{j_1, \ldots, j_s\}$ and let $d_J$ be the idempotent in component $C_J$. Let $\{d_{R \setminus \{i_1\}}, \ldots, d_{R \setminus \{i_s\}}\}$ be the set of idempotents where $\{i_1, \ldots, i_s\} = R \setminus J$. Then $d_{R \setminus \{i_1\}} U \times \cdots d_{R \setminus \{i_s\}} U \simeq d_J U$.
\end{prop}

\begin{proof}
	This follows from the observations that $d_{R \setminus \{i_j\}} \simeq (\Z / p_{i_j}^{e_{i_j}}\Z)^\times$ where $i_j \not \in J$ and that $d_J$ is a cross product of $(\Z / p_i^{e_i}\Z)^\times$ such that $i \not \in J$. 
\end{proof}

\subsection{Each connected component $C$ contains a unique idempotent $d$}
\begin{lemma} 
	\label{one_idem}
	There is exactly one idempotent element in each cycle, and so one idempotent in each orbit.
\end{lemma}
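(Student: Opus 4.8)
The plan is to use the elementary observation that the sequence $a, a^2, a^3, \ldots$ is precisely the forward orbit of $a$ under the self-map $x \mapsto ax$ of the finite set $\Zm$, and therefore has the usual ``$\rho$-shape'': a pre-periodic tail followed by a periodic cycle, in which all the tail elements are distinct from one another and from every cycle element. Write the cycle as $a^j, a^{j+1}, \ldots$ with period $\ell$, so that $a^{n+\ell} = a^n$ for every $n \ge j$; note that this relation may be multiplied by any further power of $a$, since all exponents involved stay $\ge j$.

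First I would prove \emph{existence} of an idempotent in the cycle. Choose a multiple $s$ of $\ell$ with $s \ge j$ (such $s$ exists), and set $d := a^s$. Then $d$ is an idempotent, because $d^2 = a^{2s} = a^s = d$: the exponent $2s$ exceeds $j$ and differs from $s$ by the multiple $s$ of the period $\ell$. Since $s \ge j$, this $d$ is itself a cycle element.

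Next, \emph{uniqueness in the cycle}. Suppose $d = a^s$ is any idempotent lying in the cycle, so $s \ge j$. From $a^{2s} = a^s$ with $s \ge j$ we deduce $\ell \mid s$, and hence $d$ acts as the identity on every cycle element: $d\cdot a^{j+t} = a^{s+j+t} = a^{j+t}$, since subtracting the multiple $s$ of $\ell$ from an exponent that stays $\ge j$ does not change the value. If $d' = a^{s'}$ is a second idempotent in the cycle, the same argument makes $d'$ an identity for the cycle as well; as $d$ and $d'$ are both cycle elements, $d = d\,d' = d'$. (Alternatively: the cycle is a finite, commutative, cancellative subsemigroup of $(\Zm,\cdot)$ — cancellative because multiplication by $a$, hence by every power $a^t$, permutes the cycle — so it is a group, and a group has a unique idempotent, its identity; but the direct computation above is self-contained.)

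Finally I would rule out idempotents in the tail, which upgrades ``one in the cycle'' to ``one in the orbit.'' If $a^i$ with $1 \le i \le j-1$ were idempotent, then $a^{2i} = a^i$, so $a^i = a^{2i} = a^{3i} = \cdots$ recurs; thus $a^i$ equals a strictly later power of $a$ and so belongs to the periodic part, contradicting both that $i < j$ and that tail elements are distinct from cycle elements. Hence no tail element is idempotent, and the only idempotent of the whole orbit is the one exhibited in its cycle. The only delicate point in the whole argument is index bookkeeping — confirming that the cyclic relation $a^{j+\ell}=a^j$ propagates correctly to all larger exponents, and that the definitions of ``tail'' and ``cycle'' really force disjointness of the pre-periodic and periodic parts — and everything else reduces to a one-line computation.
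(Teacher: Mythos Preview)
Your proof is correct. Both you and the paper argue via elementary exponent bookkeeping on the eventually-periodic sequence $a, a^2, a^3, \ldots$, but the details differ. For existence, the paper writes down the specific element $a^{j+\ell-1}$ and checks idempotency by a direct exponent computation, whereas you take any $a^s$ with $s \ge j$ and $\ell \mid s$; your choice sidesteps any edge-case arithmetic. For uniqueness, the paper argues that once the first idempotent appears the orbit has already exhausted all of its elements, so no further idempotent can occur; you instead observe that every idempotent in the cycle acts as a multiplicative identity on the cycle and hence any two coincide, and then separately rule out idempotents in the tail. Your three-step organization (existence in the cycle, uniqueness in the cycle, no idempotent in the tail) makes explicit why the tail cannot contribute an idempotent, a point the paper's argument leaves implicit.
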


\begin{proof}
	Consider the set of powers of $a$. Let $j$ and $\ell$ be the smallest integers such that $a^j, a^{j+1}, \ldots , a^{j+\ell} \equiv a^j \pmod{m}$. Then $$(a^{j+\ell-1})^2\equiv a^{2j+2\ell-2}=a^{j+\ell}a^{j+\ell-2}\equiv aa^{j+\ell-2} \equiv a^{j+\ell-1} \pmod{m},$$ so $a^{j+\ell-1}$ is idempotent. To show uniqueness, we note that if $a^j$ is the first idempotent occurring in the sequence $a^1, a^2, \ldots , a^j$, then all subsequent powers of $a$ in that sequence must be of the form $a^i, 1 \le i \le j$. So there are no more idempotents in the set of powers of $a$.
\end{proof}

\begin{lemma} 
	\label{component_connection}
	If two orbits share an element, then the sequences also have overlapping cycles.
\end{lemma}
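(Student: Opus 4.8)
The plan is to find a single element --- the idempotent of the shared orbit --- that lies in the cycles of both sequences. So suppose the orbit of $a$ and the orbit of $b$ have a common element $c$, say $c \equiv a^i \pmod m$ and $c \equiv b^{i'} \pmod m$ for some $i, i' \in \N$. Then every power of $c$ is at once a power of $a$ and a power of $b$: indeed $c^n \equiv a^{in} \equiv b^{i'n} \pmod m$, so the orbit of $c$ is contained, as a set, in both the orbit of $a$ and the orbit of $b$.

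Next I would apply Lemma \ref{one_idem} to the orbit of $c$. That orbit contains a unique idempotent $d$, and $d \equiv c^t \pmod m$ for some $t \in \N$; substituting the expressions for $c$ gives $d \equiv a^{it} \pmod m$ and $d \equiv b^{i't} \pmod m$. Thus $d$ is an idempotent occurring in the orbit of $a$ and in the orbit of $b$. Since, again by Lemma \ref{one_idem}, each of those orbits contains exactly one idempotent, $d$ is \emph{the} idempotent of the orbit of $a$ and also \emph{the} idempotent of the orbit of $b$. Finally, the proof of Lemma \ref{one_idem} exhibits the idempotent of an orbit as the power $a^{j+\ell-1}$, which lies among the cycle elements $a^j, a^{j+1}, \ldots, a^{j+\ell-1}$; hence the unique idempotent of any orbit lies in that orbit's cycle. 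Applying this to both $a$ and $b$ shows that $d$ belongs to the cycle of $a$ and to the cycle of $b$, so the two cycles overlap, as claimed.

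I do not anticipate a real obstacle here; the one point that deserves a sentence of care is the last step, namely that the idempotent is genuinely a cycle element rather than a tail element. That is immediate from the explicit form $a^{j+\ell-1}$ above, but if one prefers a self-contained argument: were $a^q$ with $q \ge 1$ idempotent, then $a^{2q} \equiv a^q \pmod m$, which is incompatible with $a^q$ lying strictly inside a tail, since the powers of $a$ up to the first repetition are pairwise distinct. With that in hand the argument is purely a matter of tracking which orbits a fixed idempotent appears in.
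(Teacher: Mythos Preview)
Your argument is correct, but it takes a different route from the paper's. The paper's proof is a one-liner that never mentions idempotents: writing the shared element as $c=a^k=b^\ell$, it observes that the power sequence of $c$ has a cycle, and that this cycle---being a repeating subsequence among the powers of $a$ and also among the powers of $b$---must sit inside the cycle of $a$ and inside the cycle of $b$. (Any element of the orbit of $a$ that equals a later power of $a$ already lies in the periodic part.) So the entire cycle of $c$ witnesses the overlap.

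You instead pass through Lemma~\ref{one_idem}: you locate the idempotent $d$ of the orbit of $c$, push it into the orbits of $a$ and $b$, invoke uniqueness to identify it as \emph{the} idempotent of each, and then argue that idempotents lie in cycles. This is perfectly valid, and in fact you have essentially proved Lemma~\ref{share_idem} en route---so your ordering inverts the paper's, which deduces Lemma~\ref{share_idem} from Lemmas~\ref{one_idem} and~\ref{component_connection}. The trade-off is that the paper's approach is shorter and self-contained (no appeal to Lemma~\ref{one_idem}), and it yields the stronger conclusion that the whole cycle of $c$, not just a single point, lies in the intersection; your approach, on the other hand, makes the connection to the idempotent explicit from the outset, which is where the paper is headed anyway.
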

\begin{proof}
	Let $a^k=b^\ell$. The power sequence of this element contains exactly one cycle, and this cycle is a subcycle of both power sequences {$a$} and {$b$}.
\end{proof}

\begin{lemma} 
	\label{share_idem}
	If two orbits share an element, then they share their unique idempotent.
\end{lemma}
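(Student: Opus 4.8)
The plan is to use only Lemma~\ref{one_idem} (together, if one likes, with Lemma~\ref{component_connection}), together with the elementary observation that a power of a power of $a$ is again a power of $a$. Suppose the orbits of $a$ and $b$ share an element $c$; by definition of the orbit this means $c \equiv a^k \equiv b^\ell \pmod{m}$ for some $k,\ell \in \N$. First I would note that the orbit of $c$ is contained in the orbit of $a$: every power $c^n \equiv a^{kn} \pmod{m}$ is a power of $a$, so $\{c, c^2, c^3, \dots\} \subseteq \{a, a^2, a^3, \dots\}$. Symmetrically, the orbit of $c$ is contained in the orbit of $b$.

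Next, let $e$ be the unique idempotent of the orbit of $c$, which exists by Lemma~\ref{one_idem}. Since $e$ lies in the orbit of $c$, it lies in the orbit of $a$, and $e$ is idempotent; by the uniqueness assertion of Lemma~\ref{one_idem} applied to the orbit of $a$, the element $e$ must in fact \emph{be} the unique idempotent of the orbit of $a$. Running the same argument with $b$ in place of $a$ shows that $e$ is also the unique idempotent of the orbit of $b$. Hence the orbits of $a$ and $b$ share their unique idempotent, namely $e$. (Equivalently, in the language of Lemma~\ref{component_connection}: the cycle of $c$ is a subcycle of both the cycle of $a$ and the cycle of $b$, and since the distinguished idempotent of an orbit lies on its cycle — as exhibited in the proof of Lemma~\ref{one_idem} — the common subcycle forces a common idempotent.)

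The argument involves no computation, so there is no real obstacle; the one point to keep in mind is that it is the \emph{uniqueness} half of Lemma~\ref{one_idem} that does the work — an idempotent appearing anywhere in an orbit is automatically the orbit's distinguished idempotent — together with the trivial containment of the orbit of $c$ in those of $a$ and $b$.
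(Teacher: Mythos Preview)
Your proof is correct and is essentially the same as the paper's, which simply reads ``Follows from Lemmas~\ref{one_idem} and~\ref{component_connection}.'' You have just spelled out explicitly what that one-liner leaves implicit: the orbit of the shared element $c$ sits inside both orbits, so its idempotent (which exists and is unique by Lemma~\ref{one_idem}) must be the idempotent of each. Your parenthetical remark via Lemma~\ref{component_connection} is exactly the paper's intended route.
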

\begin{proof}
	Follows from Lemmas \ref{one_idem} and \ref{component_connection}.
\end{proof}

We can now state the main structure theorem relating idempotent elements and connected components $C$. This theorem will allow us to classify all $C$ and will be useful later when we relate $C$ to the unit component $U$. 

\begin{theorem} \label{unique_idem}
	There is exactly one idempotent element in each connected component. \end{theorem}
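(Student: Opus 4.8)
The plan is to deduce Theorem~\ref{unique_idem} from the lemmas just established, treating existence and uniqueness separately. For \emph{existence}, I would observe that every connected component is nonempty, so pick any vertex $a$ in the component $C$. Its orbit $a, a^2, \ldots$ contains a cycle (the monoid $\Zm$ is finite), and by Lemma~\ref{one_idem} that cycle — hence that orbit — contains an idempotent element $d$. Since $d$ is a power of $a$, it lies in the same connected component as $a$, namely $C$. So every connected component contains at least one idempotent.

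For \emph{uniqueness}, suppose $d$ and $d'$ are both idempotents in the same connected component $C$. By definition of connected component, there is an undirected path in the sequential power graph from $d$ to $d'$; say it passes through vertices $v_0 = d, v_1, \ldots, v_t = d'$, where consecutive $v_i, v_{i+1}$ are adjacent, meaning they appear sequentially in the orbit of some element. The key point is that adjacency forces $v_i$ and $v_{i+1}$ to lie in a common orbit (the orbit of whichever element $c$ witnesses the edge), and two elements lying in a common orbit share their unique idempotent by Lemma~\ref{share_idem}. Actually, it is cleaner to argue via orbits directly: each edge of the path is contained in some orbit $O_i$; consecutive orbits $O_i$ and $O_{i+1}$ share the vertex $v_{i+1}$, so by Lemma~\ref{share_idem} they have the same unique idempotent. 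Chaining this equality along the path, the idempotent of $O_0$ equals the idempotent of $O_{t-1}$. But $d$ lies in $O_0$, so the unique idempotent in $O_0$'s cycle is $d$ (using Lemma~\ref{one_idem}, and the fact that $d$ being idempotent means $d$ is the idempotent in any orbit it belongs to — indeed $d = d^2 = d^3 = \cdots$ so $d$'s own orbit is just the fixed point $\{d\}$, and any orbit containing $d$ has $d$ on its cycle). Similarly $d'$ is the unique idempotent in $O_{t-1}$. Hence $d = d'$.

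I would also double-check the small subtlety that an idempotent $d$ always appears \emph{on the cycle} (not merely the tail) of any orbit containing it: since $d^2 = d$, once the orbit of $c$ reaches $d$ it stays at $d$ forever, so $d$ is a fixed point and thus lies on the (length-one) cycle portion. This ensures Lemma~\ref{one_idem}'s "unique idempotent in the cycle" genuinely pins down $d$.

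The main obstacle is not any deep computation — the lemmas do all the heavy lifting — but rather making the path argument airtight: connectedness is phrased in terms of undirected paths between \emph{vertices}, while Lemmas~\ref{component_connection} and~\ref{share_idem} are phrased in terms of orbits \emph{sharing an element}. The bridge is that each edge $\{v_i, v_{i+1}\}$ comes, by definition of the sequential power graph, from some element $c$ with $v_i = c^{\,k}$, $v_{i+1} = c^{\,k+1}$, so both endpoints lie in the single orbit of $c$; then Lemma~\ref{share_idem} applies to consecutive orbits along the path. Writing this transitively-chained equality of idempotents carefully is the only place requiring attention.
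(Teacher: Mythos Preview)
Your proposal is correct and follows essentially the same approach as the paper: both arguments reduce uniqueness to chaining Lemma~\ref{share_idem} along a path of overlapping orbits, with the edges of the sequential power graph providing the shared elements. Your write-up is considerably more explicit than the paper's (which compresses the whole argument into three sentences and does not separately address existence), but the underlying idea is identical.
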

\begin{proof}
	A set of power sequences makes up a connected component if and only if there is a path from any power sequence in the set to any other in the set. This happens if and only if there are shared elements between power sequences forming the links in the path. So, by Lemma \ref{share_idem}, a connected component has a unique idempotent element.
\end{proof}

\begin{corollary}
	Each connected component $C$ is the set of roots of its idempotent $d$.
\end{corollary}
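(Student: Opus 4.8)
The plan is to unpack the statement into its two inclusions and to lean on Theorem~\ref{unique_idem} together with Lemma~\ref{one_idem}. Fix a connected component $C$, and let $d$ be the unique idempotent of $C$ guaranteed by Theorem~\ref{unique_idem}. Write $\mathrm{Root}(d) = \{a \in \Zm : a^k \equiv d \pmod m \text{ for some } k \in \N\}$. I would show $C = \mathrm{Root}(d)$ by proving each containment separately.

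For the inclusion $C \subseteq \mathrm{Root}(d)$: take any $a \in C$. The orbit of $a$ contains a cycle, and by Lemma~\ref{one_idem} that cycle contains exactly one idempotent, say $d'$. Since $d'$ lies in the orbit of $a$, it lies in the component $C$; but $C$ has a unique idempotent, so $d' = d$. Now $d$ sits in the cycle of $a$, which is the eventually-periodic part of the sequence $a, a^2, a^3, \dots$, so some power $a^k$ equals $d$ in $\Zm$. Hence $a \in \mathrm{Root}(d)$.

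For the reverse inclusion $\mathrm{Root}(d) \subseteq C$: suppose $a^k \equiv d \pmod m$ for some $k$. Then $a$ and $d$ appear in a common orbit (namely the orbit of $a$), so there is a path between $a$ and $d$ in the sequential power graph, i.e.\ $a$ lies in the same connected component as $d$, which is $C$. This direction is essentially immediate from the definition of a connected component.

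I do not expect a serious obstacle here; the content has all been front-loaded into Lemma~\ref{one_idem} and Theorem~\ref{unique_idem}. The one point that warrants a sentence of care is the claim, used in the first inclusion, that the idempotent $d$ actually occurs \emph{as a power} $a^k$ of $a$ and not merely as an element reachable from $a$ by an undirected path: this follows because $d$ lies in the \emph{cycle} of $a$ (being the unique idempotent of that cycle per Lemma~\ref{one_idem}), and every element of the cycle of $a$ is of the form $a^j$ for suitable $j$ by the very definition of cycle and orbit. With that observation in place the corollary follows.
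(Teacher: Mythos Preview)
Your proof is correct and is exactly the natural unpacking of the corollary: the paper states it without proof, as an immediate consequence of Theorem~\ref{unique_idem} (combined with Lemma~\ref{one_idem}), and your two-inclusion argument is precisely how one would spell that out. The extra care you take to note that $d$ actually appears as some $a^k$ (rather than merely being reachable by an undirected path) is the only subtle point, and you handle it correctly.
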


\begin{theorem} 
	\label{idem_comp}
	There is a one-to-one correspondence between idempotents and the connected components that contain them. We may thus identify connected components with their associated idempotent. 
\end{theorem}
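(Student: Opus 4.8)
The plan is to exhibit the claimed bijection explicitly and then verify that it is well-defined, injective, and surjective, leaning almost entirely on Theorem~\ref{unique_idem} together with the elementary fact that the connected components of a graph partition its vertex set.

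First I would define a map $\Phi$ from the set of connected components of the sequential power graph to the set of idempotents of $\Zm$ by sending a component $C$ to the idempotent contained in $C$. This assignment is well-defined precisely because Theorem~\ref{unique_idem} guarantees that every connected component contains exactly one idempotent, so there is no ambiguity in the choice and $\Phi$ is a genuine function.

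Next I would check that $\Phi$ is a bijection. For injectivity, suppose $\Phi(C_1) = \Phi(C_2) = d$. Then $d \in C_1 \cap C_2$, so the two components share a vertex; since distinct connected components are vertex-disjoint, this forces $C_1 = C_2$. For surjectivity, let $d$ be any idempotent of $\Zm$. Since $d \in \Zm$, it lies in some connected component $C$; by Theorem~\ref{unique_idem} that $C$ has a unique idempotent, and since $d$ is itself an idempotent belonging to $C$, the unique idempotent of $C$ must be $d$ itself, i.e.\ $\Phi(C) = d$. Hence $\Phi$ is a bijection, which justifies identifying each component with its idempotent and writing $C_d$ (or $C_I$ when $d = d_I$) consistently with the notation of Section~\ref{notation}; one may also record here the already-noted corollary that $C_d$ is exactly the set of roots of $d$.

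The only real ``obstacle'' is bookkeeping rather than mathematics: one must be careful to invoke that the connected components form a partition of $\Zm$, so that sharing a single element forces equality of components, and to observe that the unique idempotent of the component containing a prescribed idempotent $d$ can be nothing other than $d$ itself. All of the substantive content was already established in Lemmas~\ref{one_idem}--\ref{share_idem} and Theorem~\ref{unique_idem}; this theorem simply repackages that work as a correspondence and fixes the identification used throughout the remainder of the paper.
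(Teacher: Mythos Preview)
Your proof is correct and follows essentially the same approach as the paper's: both derive the correspondence directly from Theorem~\ref{unique_idem} together with the observation that every idempotent lies in some connected component. The paper's proof is simply a one-line version of your argument, phrasing surjectivity as ``the idempotent is its own orbit, so is a subset of some connected component,'' while you spell out the injectivity and surjectivity checks explicitly.
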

\begin{proof}
	Follows from Theorem \ref{unique_idem} and the fact that the idempotent is its own orbit, so is a subset of some connected component.
\end{proof}

\begin{corollary} 
	There are $2^r$ connected components in the sequential power graph.
\end{corollary}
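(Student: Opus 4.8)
The final statement to prove is the Corollary asserting that there are $2^r$ connected components in the sequential power graph. This follows almost immediately from the machinery just assembled, so the plan is short.

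First I would invoke Theorem \ref{idem_comp}, which establishes a one-to-one correspondence between the idempotents of $\Zm$ and the connected components of the sequential power graph. Under a bijection, the two sets have the same cardinality, so the number of connected components equals the number of idempotents of $\Zm$. Then I would cite Corollary \ref{num_idem} (itself a consequence of Theorem \ref{idem_struc}, i.e.\ \cite[Theorem 2.2]{hewitt}), which states that $\Zm$ has exactly $2^r$ idempotent elements, where $r$ is the number of distinct primes dividing $m$. Chaining these two facts gives the count of $2^r$ connected components.

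The one subtlety worth spelling out — and the only place the argument could be accused of a gap — is making sure the correspondence of Theorem \ref{idem_comp} really is a bijection between the full set of idempotents and the full set of connected components: Theorem \ref{unique_idem} gives the map from components to idempotents and shows it is well-defined and injective-on-fibers in the sense that distinct components carry distinct idempotents (two components sharing an idempotent would, by the reasoning behind Lemma \ref{share_idem}, have to be the same component), while the remark in Theorem \ref{idem_comp} that each idempotent is its own orbit and hence lies in some component supplies surjectivity. So every idempotent is hit exactly once, and the counting is legitimate. I would state this in one or two sentences rather than re-proving it.

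There is essentially no obstacle here; the work was done in the preceding theorems, and this corollary is a bookkeeping consequence. Concretely, the proof reads: ``By Theorem \ref{idem_comp} there is a bijection between the idempotents of $\Zm$ and the connected components of the sequential power graph, and by Corollary \ref{num_idem} there are $2^r$ idempotents in $\Zm$; hence there are $2^r$ connected components.''
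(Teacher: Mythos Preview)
Your proposal is correct and matches the paper's own proof essentially verbatim: the paper simply says the result follows from Corollary~\ref{num_idem} and Theorem~\ref{idem_comp}. Your extra paragraph unpacking why the correspondence in Theorem~\ref{idem_comp} is genuinely a bijection is more detail than the paper gives, but it is accurate and harmless.
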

\begin{proof}
	This follows from Corollaries \ref{num_idem} and \ref{idem_comp}.
\end{proof}

\subsection{Each connected component $C$ has a unique multiplier $\pi$}
Theorem \ref{idem_comp} provides a relationship between connected components $C$ and the idempotent elements $d$. We now provide a different structure theorem that relates $C$ and the multipliers $\pi_I$. We will show later in Section \ref{tails} that knowing $I\subseteq R$ and the unit component $U$ will completely determine the structure of each connected component.  

\begin{theorem} \label{mult_comp}
	Let $C_I$ be a connected component of the sequential power graph, and let $d_I$ be the unique idempotent element in $C_I$. Then $C_I$ contains a unique multiplier $\pi_I$.
\end{theorem}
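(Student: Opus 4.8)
The plan is to establish both halves of the statement — existence, $\pi_I \in C_I$, and uniqueness, that no multiplier $\pi_J$ with $J \ne I$ lies in $C_I$ — by one short Chinese Remainder Theorem computation. I would rely only on two facts already in hand: the corollary to Theorem~\ref{unique_idem}, which says $C_I = \{v \in \Zm : v^k \equiv d_I \pmod m \text{ for some } k \in \N\}$, and the formula $d_I = a_I g_I$ from Theorem~\ref{idem_struc}, where $g_I = p_{i_1}^{e_{i_1}}\cdots p_{i_s}^{e_{i_s}}$ and $\text{gcd}(a_I, m/g_I) = 1$.

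The structural observation I would isolate first is a ``zero-pattern'' characterization of idempotents: reducing modulo $p_i^{e_i}$, one has $d_I \equiv 0 \pmod{p_i^{e_i}}$ precisely when $i \in I$, and $d_I$ is a unit modulo $p_i^{e_i}$ precisely when $i \notin I$. Indeed, for $i \in I$ we have $p_i^{e_i} \mid g_I \mid d_I$; and for $i \notin I$ both $g_I$ and $a_I$ are coprime to $p_i$ (the former because its prime support is $\{p_j : j \in I\}$, the latter because $p_i^{e_i} \mid m/g_I$), so $d_I = a_I g_I$ is coprime to $p_i$ as well.

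For existence: since the connected components partition $\Zm$, the element $\pi_I$ lies in some $C_J$, so $\pi_I^k \equiv d_J \pmod m$ for some $k$. Because $d_J$ is idempotent, $\pi_I^k \equiv \pi_I^{jk} \pmod m$ for every $j \ge 1$, so I may take $k$ as large as I like. Then for $i \in I$, $p_i \mid \pi_I$ forces $\pi_I^k \equiv 0 \pmod{p_i^{e_i}}$, hence $d_J \equiv 0 \pmod{p_i^{e_i}}$ and $i \in J$; and for $i \notin I$, $\pi_I$ is a unit modulo $p_i^{e_i}$, so $\pi_I^k$ and therefore $d_J$ is a unit modulo $p_i^{e_i}$, hence $i \notin J$. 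By the zero-pattern characterization this gives $J = I$, so $\pi_I \in C_I$. Uniqueness is the same argument run backwards: if $\pi_J \in C_I$ then $\pi_J^k \equiv d_I \pmod m$ for some $k$, which (after enlarging $k$ via idempotency of $d_I$) forces $d_I \equiv 0 \pmod{p_i^{e_i}}$ for exactly the indices $i \in J$, so again $J = I$.

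The step I expect to need the most care is the innocuous-looking ``enlarge $k$'' move: a priori a small power $\pi_I^k$ need not yet be $\equiv 0 \pmod{p_i^{e_i}}$ for every $i \in I$, and it is precisely the idempotency of the target element ($d_J^2 \equiv d_J$, respectively $d_I^2 \equiv d_I$) that licenses replacing $k$ by any large multiple of itself. Once that is in place, the remainder is routine CRT bookkeeping, and the two directions are literally the same computation read in opposite order. As a sanity check one can also note the alternative one-line existence argument: taking $x = 1$ in the formula $C_I = \{\pi_I x \pmod m : \text{gcd}(x, m/g_I) = 1\}$ of the preceding proposition immediately puts $\pi_I$ in $C_I$.
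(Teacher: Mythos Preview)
Your argument is correct. The uniqueness half is essentially the paper's own argument dressed in different clothing: the paper shows that for any $c\in C_I$ with $c^k\equiv d_I$, a prime $p\mid m$ divides $c$ if and only if $p\mid d_I$, so the only multiplier with the right prime support is $\pi_I$; your ``zero-pattern'' formulation via reductions modulo $p_i^{e_i}$ says the same thing. Where you genuinely diverge is in the existence half. The paper does \emph{not} verify directly that $\pi_I\in C_I$; instead, having shown that distinct multipliers lie in distinct components, it invokes the count $2^r$ multipliers, $2^r$ components, hence exactly one per component by pigeonhole. Your route is a direct CRT computation placing $\pi_I$ in $C_I$, with the idempotency of the target used to enlarge $k$. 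Both are sound; the paper's pigeonhole is shorter once uniqueness is in hand, while your direct verification is more constructive and, as you note, collapses to a one-liner if one is willing to appeal to the earlier description $C_I=\{\pi_I x:\gcd(x,m/g_I)=1\}$ with $x=1$. One minor remark: your ``enlarge $k$'' step is fine, but it is not strictly needed --- once $p_i\mid \pi_I$ gives $p_i\mid d_J$, the idempotent structure theorem already forces $p_i^{e_i}\mid d_J$, so the original $k$ suffices.
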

\begin{proof}
	Let $ c$ be any element in $C$. Then eventually $c^k \equiv d_I\pmod{m}$. Since $\pi_I$ divides $m$ and $d_I$, then it must also divide $c$. Therefore $c$ is a multiple of $\pi_I$. Further, if $p | m$ and $p | c$, then $c^k \equiv d_I \pmod{m}$ implies $p | d_I$. Therefore the only possible prime factors of $c$ that divide $m$ must be those in $\pi_I$. This implies that there is at most one multiplier in each component.
	
	Observe that there are $2^r$ possible multipliers. Since there are $2^r$ connected components and each multiplier must be in a different connected component, then every connected component contains exactly one multiplier. 
\end{proof}

\subsection{Each connected component $C$ contains a unique $dU$} \label{unique_dU}
\begin{theorem} 
	Each connected component $C_I$ with idempotent $d_I$ contains $d_IU$. This is the unique $dU$ in $C_I$, and is the set of all elements in cycles in $C_I$.
\end{theorem}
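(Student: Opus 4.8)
The plan is to establish the three assertions separately: that $d_IU \subseteq C_I$, that $d_IU$ is the only translate $dU$ lying inside $C_I$, and that $d_IU$ is exactly the set of cycle elements of $C_I$. The first two are quick. That $d_IU\subseteq C_I$ follows from Proposition \ref{dU_connected}: $d_IU$ is connected and contains $d_I = d_I\cdot 1$, so it lies in the connected component of $d_I$, which is $C_I$. For uniqueness, if $dU\subseteq C_I$ for some idempotent $d$, then $d = d\cdot 1\in C_I$ is an idempotent inside $C_I$, so $d = d_I$ by Theorem \ref{unique_idem}.

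Next I would check that every element of $d_IU$ lies on a cycle. By Proposition \ref{dU_mult_group}, $d_IU$ is a finite group with identity $d_I$, so each $d_Iu$ has finite order: $(d_Iu)^k = d_I$ for some $k\ge 1$, and hence $(d_Iu)^{k+1} = d_I\cdot d_Iu = d_Iu$. Thus the orbit of $d_Iu$ is purely periodic, i.e.\ $d_Iu$ is a cycle element of $C_I$.

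The hard part is the converse: every cycle element of $C_I$ has the form $d_Iu$. Let $v$ be a cycle element of $C_I$. Its orbit is then purely periodic with some period $P$, and the unique idempotent of that orbit (Lemma \ref{one_idem}) lies in $C_I$, so it equals $d_I$; write $d_I = v^t$. From $d_I^2 = d_I$ we get $2t\equiv t \pmod P$, hence $P\mid t$ and therefore $d_Iv = v^{t+1} = v$ — the component's idempotent acts as the identity on the cycle through $v$. Since $g_I\mid d_I$, each prime power $p_{i_\ell}^{e_{i_\ell}}$ with $i_\ell\in I$ divides $d_I$, so $v = d_Iv$ forces $g_I\mid v$; write $v = g_Iw$. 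If a prime $p_j$ with $j\in R\setminus I$ divided $w$, it would divide $\gcd(v,m)$, contradicting the fact (established in the proof of Theorem \ref{mult_comp}) that only primes with index in $I$ divide $\gcd(v,m)$; moreover no prime with index in $I$ divides $m/g_I$. Hence $\gcd(w, m/g_I) = 1$, so $v$ belongs to $\{g_Ix \pmod m : \gcd(x, m/g_I) = 1\}$, which the earlier proposition identifies with $d_IU$. Combining the two inclusions shows $d_IU$ is exactly the set of cycle elements of $C_I$, and together with the first paragraph this completes the statement. I expect the only genuine subtlety to be the period computation $P\mid t$, i.e.\ recognizing that the component's idempotent is a two-sided identity on any cycle it meets; everything else is bookkeeping against the explicit descriptions of $C_I$ and $d_IU$ already derived.
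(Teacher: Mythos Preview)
Your argument is correct. For the first two assertions (containment $d_IU\subseteq C_I$ and uniqueness of $dU$ inside $C_I$) you do exactly what the paper does: invoke connectedness of $d_IU$, the fact that $d_I\in d_IU$, and the uniqueness of the idempotent in a component.

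Where you go further than the paper is the third assertion, that $d_IU$ coincides with the set of cycle elements of $C_I$. The paper's own proof does not address this directly; it only remarks that $d_IU$ arises by relabelling the unit cycles by $d_I$, which shows $d_IU$ consists of cycle elements but not the converse. The paper effectively defers the reverse inclusion to the later tail classification (Theorem~\ref{tail_struc}), which characterizes non-tails as exactly the multiples of $g_I$ in $C_I$ and hence as $d_IU$ via the explicit description $d_IU=\{g_Ix:\gcd(x,m/g_I)=1\}$. Your proof is self-contained at this point: you observe that a cycle element $v$ has purely periodic orbit, deduce from $d_I^2=d_I$ that the exponent $t$ with $v^t=d_I$ is a multiple of the period, and conclude $d_Iv=v$; reducing modulo each $p_{i_\ell}^{e_{i_\ell}}$ then forces $g_I\mid v$, and Theorem~\ref{mult_comp} rules out extraneous prime divisors of the cofactor. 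This is a clean, direct route that avoids forward references, and the period computation you flagged as the only subtlety is handled correctly.
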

\begin{proof}
	Let $U$ be the unit component and let $d_I$ be the idempotent determining $C_I$. Let $d_IU$ be the connected component of the cycles obtained by multiplying every vertex label in $U$ by $d_I$. We know from above that $d_IU$ is a connected multiplicative group. Since $1 \in U$, then $d_I\in d_IU$. Therefore $C_I$ contains $d_IU$. Uniqueness follows from the uniqueness of $d_I$.
\end{proof}

\begin{remark}
	We can now show that a component $C$ is fully determined by $I\subseteq R$. We do so by finding a bijective relation between $\pi_I$ and $d_I$.
	Given the idempotent $d_I \in C_I$, it is easy to determine the multiplier for $C_I$. Given the multiplier $\pi_I$, we can find $g_I = p_{i_1}^{e_{i_1}} \cdots p_{i_s}^{e_{i_s}}$. From here, we know that $d \equiv g_I a_I \pmod{m}$ where $a_I \equiv g_I^{-1} \pmod{\frac{m}{g_I}}$.
\end{remark}

\subsection{Tails of components $C$ }
\subsubsection{Classification of the tails}
\label{tails}
We now consider the structure of the tails in each component $C$ of the sequential power graph. In particular, we classify when a component will have tails and we relate the group of units to the tails as well. Note that \cite[Theorem 50]{shanks} is similar to the following theorem, though we make the result more explicit. Further, in \cite{effinger}, Effinger proves this statement in the special case $a \not \equiv a^{\phi(m)+1} \pmod{m}$. 

\begin{theorem}Let $C_I$ be a connected component in the sequential power graph of $\Zm$. 
	Then $v$ is a tail if and only if $g_I \nmid v$. \label{tail_struc}
\end{theorem}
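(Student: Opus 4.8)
The plan is to characterize tail elements directly via divisibility by $g_I$, using the explicit description $C_I = \{\pi_I x \pmod m : \gcd(x, m/g_I) = 1\}$ from the earlier proposition together with the fact (proven above) that $d_I U$ is exactly the set of cycle elements in $C_I$, and that $d_I U = \{g_I x \pmod m : \gcd(x, m/g_I) = 1\}$. So $v \in C_I$ is a non-tail (i.e., lies in a cycle) if and only if $v \in d_I U$, and the whole theorem reduces to showing: \emph{for $v \in C_I$, we have $v \in d_I U$ if and only if $g_I \mid v$.}

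First I would prove the easy direction: if $v \in d_I U$, then $v = g_I x$ for some $x$ with $\gcd(x, m/g_I) = 1$, so trivially $g_I \mid v$ (as integers, choosing the representative $g_I x \bmod m$; one must be slightly careful that $g_I \mid m$ guarantees $g_I$ divides every residue of $g_I x$ modulo $m$, which holds since $g_I \mid m$). Conversely, suppose $v \in C_I$ with $g_I \mid v$. Write $v = \pi_I x$ with $\gcd(x, m/g_I) = 1$ using the structural proposition. Since $g_I = p_{i_1}^{e_{i_1}} \cdots p_{i_s}^{e_{i_s}} \mid v = \pi_I x = p_{i_1} \cdots p_{i_s} x$, and $\gcd(x, m/g_I) = 1$ forces all the "extra" prime-power factors $p_{i_k}^{e_{i_k} - 1}$ to come from $x$; more precisely, $p_{i_k}^{e_{i_k}-1} \mid x$ for each $k$, so $x = (g_I/\pi_I) y = \frac{g_I}{\pi_I} y$ where $y$ still satisfies $\gcd(y, m/g_I) = 1$, giving $v = \pi_I \cdot \frac{g_I}{\pi_I} y = g_I y \in d_I U$. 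Then I would invoke the theorem that $d_I U$ is precisely the set of cycle elements of $C_I$, hence $v$ is not a tail.

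For the contrapositive packaging: $v$ is a tail iff $v \in C_I \setminus d_I U$ iff ($v \in C_I$ and $g_I \nmid v$); I would phrase the statement as "for $v \in C_I$, $v$ is a tail iff $g_I \nmid v$," which matches the theorem as stated (the ambient assumption $v \in C_I$ being implicit in "$v$ is a tail in $C_I$").

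The main obstacle I anticipate is the bookkeeping in the converse direction — making rigorous the claim that $g_I \mid v$ combined with $\gcd(x, m/g_I) = 1$ (where $v \equiv \pi_I x \pmod m$) actually forces $v$ to be a $g_I$-multiple of a unit mod $m/g_I$, rather than just divisible by $g_I$ in one particular representative. The subtlety is that divisibility statements for residues mod $m$ need $g_I \mid m$ to be well-defined, and one has to track that the cofactor picked up by $x$ is coprime to $m/g_I$ so that the resulting element genuinely lies in $d_I U$ and not merely in $C_I$. Everything else is a short deduction from results already established (the explicit forms of $C_I$ and $d_I U$, Lemma~\ref{one_idem}, and the theorem identifying $d_I U$ with the cycle elements).
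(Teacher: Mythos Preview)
Your argument is valid in outline but takes a different route from the paper and leans on a fact whose proof in the paper is not complete at that point, so there is a circularity risk you should patch.

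The paper argues directly from the orbit of $v$, never invoking the identification of $d_IU$ with the cycle set. For the forward direction it observes that if $v$ lies in its own cycle then some power of $v$ equals $d_I$, whence $v \equiv d_I\cdot v^{\,\ell}\pmod m$ for suitable $\ell$, and since $g_I\mid d_I$ and $g_I\mid m$ one gets $g_I\mid v$. For the converse it writes $v=g_Ix$ with $\gcd(x,m/g_I)=1$, chooses $k$ with $(g_Ix)^k\equiv 1\pmod{m/g_I}$, and checks that $(g_Ix)^{k+1}\equiv g_Ix\pmod m$, so $v$ is periodic. That is the whole proof.

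Your reduction to ``$v\in d_IU$ iff $g_I\mid v$'' via the explicit descriptions of $C_I$ and $d_IU$ is tidy, but it needs the equivalence ``$v$ is a non-tail $\Longleftrightarrow v\in d_IU$.'' The theorem in Section~\ref{unique_dU} \emph{asserts} that $d_IU$ is the set of all cycle elements of $C_I$, yet its proof only establishes $d_IU\subseteq\{\text{cycle elements}\}$; the reverse inclusion is exactly what Theorem~\ref{tail_struc} supplies (cycle elements $=\{v:g_I\mid v\}=d_IU$). So for the direction ``$v$ non-tail $\Rightarrow g_I\mid v$'' you are, as written, assuming part of the conclusion. The repair is short: for that direction argue directly that a cycle element satisfies $d_Iv=v$ (since $d_I$ is the identity of the cyclic group generated by $v$), and $d_Iv=v$ with $g_I\mid d_I$, $g_I\mid m$ forces $g_I\mid v$. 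Your treatment of the other direction ($g_I\mid v\Rightarrow v\in d_IU\Rightarrow v$ non-tail) is fine, and your anticipated ``bookkeeping obstacle'' is handled most cleanly by writing $v=g_Iz$ and noting that any prime dividing both $z$ and $m/g_I$ would be some $p_j$ with $j\notin I$ dividing $v$, contradicting $v\in C_I$.
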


\begin{proof}
	We will prove this proposition by proving that $v$ is in the cycle of the orbit of $v$ if and only if $g_I | v$. Write $d_I = g_Ia_I$ and recall $\text{gcd}(a_I, m) =1$.  Let $v \in C$. 
	
	Suppose in the cycle $\{v^j ,v^{j+1}, \ldots, v^k\}$ we have $i=1$. That is, $v$ is in the cycle of powers of $v$. Then in particular, $v^k \equiv d_I \pmod{m}$. Therefore $v \equiv v^{k} v^{j-k+1} \equiv d_I v^{j-k+1} \pmod{m}$. Since $g | d_I$ and $g | m$, then $g | v$.
	
	Suppose that $g_I | v$. Then $v = g_Ix$ for some $x$ relatively prime to $\frac{m}{g_I}$. Let $k \in \N$ be such that $(g_Ix)^k \equiv 1 \pmod{\frac{m}{g_I}}$. Then $(g_Ix)^{k+1} \equiv g_Ix \pmod{m}$. Therefore $v$ is in the cycle of $v$.
\end{proof}

The previous proposition classifies exactly which vertices in $C $ will be tails. We next want to classify when a component will have tails. In order to proceed, we need the following lemma.

\begin{lemma} If $g_I$ is not squarefree, then $C_I$ must contain at least one tail. In particular, $\pi_I$ is a tail. \label{tail_exist}\end{lemma}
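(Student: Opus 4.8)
The plan is to use the tail-classification result just proved (Theorem \ref{tail_struc}) together with the explicit description of the multiplier. Recall that $\pi_I = p_{i_1} \cdots p_{i_s}$ is the product of the \emph{distinct} primes dividing $g_I = p_{i_1}^{e_{i_1}} \cdots p_{i_s}^{e_{i_s}}$. By Theorem \ref{mult_comp}, $\pi_I$ is an element of the component $C_I$, so it makes sense to ask whether it is a tail element. By Theorem \ref{tail_struc}, $\pi_I$ is a tail in $C_I$ if and only if $g_I \nmid \pi_I$. So the whole lemma reduces to the elementary divisibility claim: if $g_I$ is not squarefree, then $g_I \nmid \pi_I$.

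First I would establish that elementary claim. Write $g_I = \prod_{k=1}^{s} p_{i_k}^{e_{i_k}}$ and $\pi_I = \prod_{k=1}^{s} p_{i_k}$. If $g_I$ is not squarefree, then some exponent $e_{i_k} \geq 2$. Comparing the power of that prime $p_{i_k}$ on each side: $v_{p_{i_k}}(g_I) = e_{i_k} \geq 2 > 1 = v_{p_{i_k}}(\pi_I)$, so $g_I$ cannot divide $\pi_I$. Hence $g_I \nmid \pi_I$, and by Theorem \ref{tail_struc}, $\pi_I$ is a tail of $C_I$. Since $\pi_I$ is a tail, $C_I$ contains at least one tail, proving both sentences of the lemma.

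One small point to address carefully is that $\pi_I$ genuinely lies in $C_I$ and is not equal to the idempotent or otherwise trivial; but this is exactly the content of Theorem \ref{mult_comp}, which asserts $C_I$ contains the unique multiplier $\pi_I$, so I would simply cite it. I should also note the edge case $I = \emptyset$: then $g_I = 1$ is squarefree, so the hypothesis is vacuous and there is nothing to check; similarly if $I$ is nonempty but $m$ is squarefree, $g_I$ is squarefree and again the hypothesis fails. So the lemma is only asserting something when $m$ has a prime whose exponent is at least $2$ among the primes indexed by $I$.

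The main obstacle here is essentially nonexistent — this is a short corollary of the preceding theorem plus a one-line valuation argument. The only thing to be slightly careful about is not to conflate $\pi_I$ (the radical-type multiplier, product of distinct primes) with $g_I$ (the full prime-power part of the idempotent); keeping the $p$-adic valuations straight makes the divisibility direction ($g_I \nmid \pi_I$, not the reverse) transparent.
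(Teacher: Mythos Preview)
Your proof is correct and follows essentially the same approach as the paper: both argue that $\pi_I \in C_I$, observe that $g_I \nmid \pi_I$ when $g_I$ is not squarefree, and then apply Theorem~\ref{tail_struc}. Your version is a bit more explicit (citing Theorem~\ref{mult_comp} and using valuations), but the idea is identical.
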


\begin{proof}
	By Propositon \ref{tail_struc}, we know that if $g_I$ is not squarefree, then tails may exist. Recall that the component $C_I$ must contain all elements divisible by the primes in $\pi_I$ and not divisible by all other primes in the factorization of $m$. Therefore the smallest element of $C_I$ is $\pi_I$. Since $g_I$ is not squarefree, then $g_I \nmid \pi_I$. Therefore $\pi_I$ is a tail.
\end{proof}

\begin{prop} 
	$ g_I$ is squarefree if and only if $C_I$ has no tails.\end{prop}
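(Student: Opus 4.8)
The statement is a biconditional, and one direction is already in hand: Lemma~\ref{tail_exist} gives exactly ``if $g_I$ is not squarefree, then $C_I$ has a tail'' (indeed $\pi_I$ itself). So the plan is to establish the contrapositive of the remaining direction, namely that if $g_I$ is squarefree then $C_I$ has no tails, and then simply assemble the two halves.

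For the forward direction, I would start from the observation that $g_I = p_{i_1}^{e_{i_1}} \cdots p_{i_s}^{e_{i_s}}$ while $\pi_I = p_{i_1} \cdots p_{i_s}$, so $g_I$ is squarefree precisely when $e_{i_j} = 1$ for every $j$, i.e.\ precisely when $g_I = \pi_I$. Next I invoke the explicit description of the component, $C_I = \{\pi_I x \pmod m : \gcd(x, \tfrac{m}{g_I}) = 1\}$, proved earlier: every element $v \in C_I$ is of the form $\pi_I x$, hence divisible by $\pi_I = g_I$. Finally, Theorem~\ref{tail_struc} says $v$ is a tail of $C_I$ if and only if $g_I \nmid v$; since $g_I \mid v$ for all $v \in C_I$, no element of $C_I$ is a tail. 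This gives ``$g_I$ squarefree $\Rightarrow$ $C_I$ has no tails.''

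Combining this with Lemma~\ref{tail_exist} (in its contrapositive form, ``$C_I$ has no tails $\Rightarrow$ $g_I$ squarefree'') yields the equivalence. I do not anticipate a genuine obstacle here: the proposition is essentially a repackaging of Theorem~\ref{tail_struc}, the explicit form of $C_I$, and Lemma~\ref{tail_exist}. The only point worth stating carefully is the elementary equivalence $g_I$ squarefree $\iff g_I = \pi_I$, which is what lets Theorem~\ref{tail_struc}'s divisibility criterion ``$g_I \mid v$'' be read off directly from membership in $C_I$.
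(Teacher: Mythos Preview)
Your proposal is correct and follows essentially the same approach as the paper: both directions are handled exactly as you describe, using Lemma~\ref{tail_exist} (via contrapositive) for one implication and, for the other, the identification $g_I=\pi_I$ when $g_I$ is squarefree together with the divisibility criterion of Theorem~\ref{tail_struc}. The only cosmetic difference is that the paper phrases the ``every $v\in C_I$ is a multiple of $\pi_I$'' step in terms of $g_I=\pi_I$ being the smallest element of $C_I$, whereas you invoke the explicit description $C_I=\{\pi_I x:\gcd(x,m/g_I)=1\}$; these are the same observation.
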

\begin{proof}
	By Lemma \ref{tail_exist}, we know that if $g_I$ is not squarefree, then tails exist. Thus, by the contrapositive, if $C_I$ has no tails, then $g_I$ is squarefree.
	
	Now suppose that $g_I$ is squarefree. Recall that the component $C_I$ must contain all elements divisible by the primes in $\pi_I$ and not divisible by all other primes in the factorization of $m$. Since $g_I$ is squarefree, then $g_I$ must be the smallest element in $C_I$; that is, $g_I = \pi_I$. Therefore all elements $v \in C_I$ are of the form $g_Ix $ for some $x \in \Z$.  Since $g_I | v$ for each $v \in C_I$, then by Proposition \ref{tail_struc}, $v$ cannot be a tail.
\end{proof}

The previous proposition fully classifies the components without tails since $g_I$ is squarefree if and only if all exponents $e_{i}$ for indices $i \in I$ are equal to 1. Recall that by Theorem \ref{idem_struc}, this occurs if and only if all exponents $e_{i} = 1$ in the factorization of $m$. 

\begin{theorem} \cite[Theorem 79]{porubsky}
	The sequential power graph of $\Zm$ has a connected component with a tail if and only if $m$ is not squarefree.
\end{theorem}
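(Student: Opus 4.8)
The plan is to deduce this as an immediate consequence of the preceding proposition, which states that $g_I$ is squarefree if and only if $C_I$ has no tails, together with the remark that follows it. First I would recall that the sequential power graph has exactly $2^r$ connected components, one $C_I$ for each subset $I \subseteq R$ (Theorem \ref{mult_comp} and its corollaries), and that a given component $C_I$ has a tail if and only if $g_I = p_{i_1}^{e_{i_1}} \cdots p_{i_s}^{e_{i_s}}$ fails to be squarefree, i.e. if and only if some exponent $e_i$ with $i \in I$ exceeds $1$.

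For the forward direction, suppose $m$ is not squarefree, so some exponent $e_k$ in the factorization $m = p_1^{e_1} \cdots p_r^{e_r}$ satisfies $e_k \geq 2$. Take $I = \{k\}$ (or indeed any $I$ containing $k$); then $g_I = p_k^{e_k}$ is divisible by $p_k^2$ and hence is not squarefree, so by the previous proposition $C_I$ has at least one tail, and in fact $\pi_I$ itself is a tail by Lemma \ref{tail_exist}. Thus the graph has a connected component with a tail. For the converse, suppose $m$ is squarefree, so every $e_i = 1$. Then for every $I \subseteq R$ we have $g_I = \prod_{i \in I} p_i^{1} = \pi_I$, which is squarefree, so by the previous proposition no $C_I$ has a tail. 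Since the $C_I$ for $I \subseteq R$ exhaust all connected components, the graph has no tails at all.

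There is essentially no obstacle here; the work has already been done in the earlier propositions, and this theorem is just the global statement obtained by quantifying over all $I \subseteq R$. The only point requiring a line of care is the observation that $g_I$ being squarefree for a fixed $I$ is governed by the exponents $e_i$ for $i \in I$ only, so that the existence of \emph{some} component with a tail is equivalent to the existence of \emph{some} exponent $e_i \geq 2$ in $m$ — which is exactly the statement that $m$ is not squarefree. This matches the sentence already placed just before the theorem statement, so the proof is a two-direction restatement of that remark.
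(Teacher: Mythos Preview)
Your proposal is correct and mirrors exactly what the paper does: the theorem is stated without a separate proof and is treated as an immediate consequence of the preceding proposition (that $g_I$ is squarefree if and only if $C_I$ has no tails) together with the observation in the paragraph just before it that this happens for all $I$ precisely when every $e_i=1$, i.e.\ when $m$ is squarefree. Your two-direction write-up simply makes that deduction explicit.
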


\subsubsection{Structure of the tails}

Consider a component $C_I$ with idempotent $d_I$ and multiplier $\pi_I$ that contains tails. Recall that $d_IU \subset C_I$ and observe that $d_IU = \{g_Ix \pmod{m} \, : \,  \text{gcd}(x, \frac{m}{g_I}) = 1\}$ is the set of non-tails. We can further describe the tails in terms of $U$. There may not be one tail $t$ such that $tU$ describes all the tails of $C_I$, but we can describe a set of $t_i$ such that $\{t_iU\}$ partitions the set of tails.

\begin{remark}
	If $I = R$, so $0 \in C_R$, then every element $x \in C_R- \{0\}$ is a tail since $x$ is nilpotent. There are $p^{e_1-1} \cdots p^{e_r-1}$ nilpotent elements, so there are $p^{e_1-1} \cdots p^{e_r-1} - 1$ tails in $C_R$. 
\end{remark}

With the nilpotent component classified, we now focus on the components $C$ not containing 0.

\begin{prop} Let $I \subsetneq R$. That is, let $\pi_I \ne 0$ be a multiplier. Let $y$ be a proper factor of $\frac{g_I}{\pi_I}$. Let $S =\{y\pi_I x \pmod{m} \,  : \, \text{gcd}(x, \frac{m}{y\pi_I}) = 1 \}$, considering $S$ as a set of equivalence classes modulo $m$ with representative $y\pi_Ix \in [0,m)$. Then $y\pi_I U = S$. 
	\label{tail_unit_relation}
\end{prop}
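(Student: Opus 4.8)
The plan is to mirror, essentially verbatim, the earlier argument that $dU=\{gx\pmod m:\text{gcd}(x,\frac{m}{g})=1\}$ for an idempotent $d$ with $g=\text{gcd}(d,m)$: establish one easy inclusion and then match cardinalities. The crucial preliminary observation is that $y\pi_I$ divides $m$. Since $\pi_I=p_{i_1}\cdots p_{i_s}$ and $y\mid\frac{g_I}{\pi_I}=p_{i_1}^{e_{i_1}-1}\cdots p_{i_s}^{e_{i_s}-1}$, we get $y\pi_I\mid g_I\mid m$; hence $\text{gcd}(y\pi_I,m)=y\pi_I$, and because $I\subsetneq R$ there is a prime $p_j$ with $j\in R\setminus I$ dividing $m$ but not $g_I$, so $0<y\pi_I\le g_I<m$ and $y\pi_I$ is a nonzero residue. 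Write $n=\frac{m}{y\pi_I}$.

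First I would prove the inclusion $y\pi_IU\subseteq S$: given $u\in U$, its representative in $[0,m)$ is coprime to $m$, hence to $n$, so taking $x=u$ in the definition of $S$ puts $y\pi_Iu\pmod m$ in $S$. For the reverse inclusion I would not argue directly but instead show $|y\pi_IU|=|S|=\phi(n)$, which forces the above inclusion to be an equality. That $|S|=\phi(n)$ is immediate: $y\pi_Ix_1\equiv y\pi_Ix_2\pmod m$ iff $x_1\equiv x_2\pmod n$, and coprimality to $n$ depends only on the class mod $n$, so the elements of $S$ biject with the units of $\Z/n\Z$. For $|y\pi_IU|$ I would rerun the proof of Proposition~\ref{size_dU} with $g=\text{gcd}(y\pi_I,m)=y\pi_I$; since $\frac{y\pi_I}{g}=1$, we get $y\pi_Iu_i\equiv y\pi_Iu_j\pmod m$ iff $u_i\equiv u_j\pmod n$, so $|y\pi_IU|$ is the number of residue classes mod $n$ containing a unit of $\Zm$. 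Each such class is a unit of $\Z/n\Z$ (reduction mod $n$ preserves coprimality because $n\mid m$), and conversely every unit of $\Z/n\Z$ lifts to a unit of $\Zm$ by a CRT correction at the primes dividing $m$ but not $n$, so the reduction $(\Zm)^\times\to(\Z/n\Z)^\times$ is onto; hence $|y\pi_IU|=\phi(n)$. Combining, $y\pi_IU=S$.

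The proof is almost entirely mechanical, and the two points that carry any weight are the divisibility $y\pi_I\mid m$ — which collapses $\text{gcd}(y\pi_I,m)$ to $y\pi_I$ and so reduces everything to the $dU=S$ template — and the surjectivity of $(\Zm)^\times\to(\Z/n\Z)^\times$. I would also note that properness of $y$ plays no role in the argument itself; it merely guarantees that $y\pi_IU$ consists of tails (by Proposition~\ref{tail_struc}, as $g_I\nmid y\pi_I$), the excluded case $y=\frac{g_I}{\pi_I}$ with $y\pi_I=g_I$ being precisely the non-tail set $d_IU$ handled earlier.
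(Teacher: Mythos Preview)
Your proposal is correct and follows essentially the same approach as the paper: establish $y\pi_IU\subseteq S$ via coprimality, then match cardinalities by rerunning the Proposition~\ref{size_dU} argument with $g=y\pi_I$ in place of $g_I$. You are slightly more explicit than the paper about the divisibility $y\pi_I\mid m$ and the surjectivity of $(\Zm)^\times\to(\Z/n\Z)^\times$, and your closing remark that properness of $y$ is irrelevant to the equality itself (serving only to ensure the elements are tails) is a nice observation the paper leaves implicit.
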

\begin{proof}
	First consider $y \pi_I u$ for $u \in U$. Since $\text{gcd}(u,m) =1$ then $y \pi_I u \in S$. Note that $\text{gcd}(u,m) =1$  also implies $y \pi_I u \in C_I$. Further, $g_I \nmid y \pi_I u$ so $y \pi_I u $ is a tail. In fact, $S$ is the set of tails in $C$ such that $y \pi_I $ divides the tail but no $z \pi_I$ divides the tail for $z  >y$. 
	
	By construction, $|S| = \phi(\frac{m}{y \pi_I})$. The same argument that shows $|d_IU| = \phi(\frac{m}{g_I})$ will show that $|y \pi_I U| = \phi(\frac{m}{y \pi_I})$. Since $y \pi_I U \subseteq S$ and the two finite sets have the same cardinality, then $ y \pi_I U = S$. 
\end{proof}

\begin{corollary} 
	\label{tail_partition}
	Let $I \ne \emptyset, R$. That is, let $\pi_I \ne 0,1$. The sets $y \pi_I U$ such that $y$ is a proper factor of $\frac{g_I}{\pi_I}$ partition the tails of the component determined by $I$. 	
\end{corollary}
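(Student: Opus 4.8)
The plan is to reduce the statement to two assertions — that the sets $y\pi_I U$ collectively cover every tail of $C_I$, and that distinct choices of $y$ yield disjoint sets — and to settle both by tracking, prime by prime, the exact power of each $p_{i_j}$ ($i_j\in I$) dividing the canonical representative in $[0,m)$. Proposition \ref{tail_unit_relation} already does most of the work: for each proper factor $y$ of $\frac{g_I}{\pi_I}$ it identifies $y\pi_I U$ with $\{y\pi_I x\pmod m : \text{gcd}(x,\frac{m}{y\pi_I})=1\}$ and shows this set consists entirely of tails of $C_I$, and the set is nonempty since it contains $y\pi_I$. So the only remaining work is covering and disjointness. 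Throughout I will use the explicit description of $C_I$ coming from Theorem \ref{comp_struc}: every $v\in C_I$ has a representative $v=p_{i_1}^{a_1}\cdots p_{i_s}^{a_s}w$ with each $a_j\ge 1$ and $\text{gcd}(w,m)=1$ (here $I=\{i_1,\dots,i_s\}$), and by Theorem \ref{tail_struc} such a $v$ is a tail exactly when $g_I\nmid v$, i.e. when $a_j<e_{i_j}$ for some $j$.

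For covering, I would associate to a tail $v$ the factor $y_v=p_{i_1}^{c_1}\cdots p_{i_s}^{c_s}$ with $c_j=\min(a_j,e_{i_j})-1$. Each $c_j\in\{0,\dots,e_{i_j}-1\}$, so $y_v\mid\frac{g_I}{\pi_I}$; and since $v$ is a tail, some $a_j<e_{i_j}$, which forces $c_j<e_{i_j}-1$ and hence makes $y_v$ a \emph{proper} factor of $\frac{g_I}{\pi_I}$. Setting $x=p_{i_1}^{a_1-c_1-1}\cdots p_{i_s}^{a_s-c_s-1}w$ (all exponents $\ge 0$), one has $v=y_v\pi_I x$, and a coordinatewise inspection gives $\text{gcd}(x,\frac{m}{y_v\pi_I})=1$: where $a_j\le e_{i_j}$ the exponent of $p_{i_j}$ in $x$ vanishes, where $a_j>e_{i_j}$ the prime $p_{i_j}$ is absent from $\frac{m}{y_v\pi_I}$, and $w$ is coprime to $m$. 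Thus $v$ lies in $y_v\pi_I U$ by Proposition \ref{tail_unit_relation}.

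For disjointness, suppose $v\in y\pi_I U\cap y'\pi_I U$; write $p_{i_j}^{c_j}\,\|\,y$ and $p_{i_j}^{c_j'}\,\|\,y'$, so $c_j,c_j'\le e_{i_j}-1$. Given an equality $y\pi_I u\equiv y'\pi_I u'\pmod m$ with $u,u'$ units, reduce modulo $p_{i_j}^{e_{i_j}}$: since $u,u'$ are coprime to $p_{i_j}$, the two sides generate the ideals $(p_{i_j}^{c_j+1})$ and $(p_{i_j}^{c_j'+1})$ of $\Z/p_{i_j}^{e_{i_j}}\Z$ respectively, and because $c_j+1,c_j'+1\le e_{i_j}$ these exponents are not truncated, so $c_j=c_j'$. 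Since this holds for every $j$, $y=y'$. Combined with the first paragraph, this shows the $y\pi_I U$ partition the tails of $C_I$.

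I expect the main obstacle to be precisely the bookkeeping just flagged: the integer representative of an element of $C_I$ may carry a power of $p_{i_j}$ exceeding $e_{i_j}$, so whether $v$ is a tail is governed by the truncated exponents $\min(a_j,e_{i_j})$, not the raw $a_j$. Getting the covering map $v\mapsto y_v$ to land among the proper factors of $\frac{g_I}{\pi_I}$, and keeping the ideal-generation argument for disjointness honest, both hinge on handling this truncation correctly; everything else is routine. (The hypotheses $I\ne\emptyset,R$ exclude genuinely degenerate cases: $C_\emptyset=U$ has no tails, $C_R$ is the nilpotent component whose tails are counted directly in the preceding Remark, and in any event Proposition \ref{tail_unit_relation} is only stated for $I\subsetneq R$.)
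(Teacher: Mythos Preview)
Your proof is correct and follows the approach the paper intends: the corollary is stated without proof, relying on Proposition~\ref{tail_unit_relation}, whose proof already asserts (somewhat loosely) that each $S=y\pi_I U$ consists of exactly those tails with a prescribed $p_{i_j}$-divisibility profile. You have made the covering and disjointness explicit via prime-by-prime valuation bookkeeping, and in particular you handle the truncation issue (exponents $a_j>e_{i_j}$) more carefully than the paper's one-line remark ``no $z\pi_I$ divides the tail for $z>y$'' does, so your argument is a rigorous fleshing-out of the same idea rather than a different route.
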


\begin{corollary}
	\label{num_tails}
	In the above situation, the sets $y\pi_I U$ partition the set of tails of component $C_I$. The number of tails is $$\sum_{\substack{y | \frac{g_I}{\pi_I}\\ y \ne \frac{g_I}{\pi_I}}} \phi\left(\frac{m}{y \pi_I}\right).$$
\end{corollary}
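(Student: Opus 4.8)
The plan is to obtain this corollary as an essentially immediate bookkeeping consequence of the two results just proved: Proposition \ref{tail_unit_relation}, which identifies each set $y\pi_I U$ with the explicit set $S = \{y\pi_I x \pmod m : \gcd(x, \tfrac{m}{y\pi_I}) = 1\}$ and in particular records that $|y\pi_I U| = \phi\big(\tfrac{m}{y\pi_I}\big)$, and Corollary \ref{tail_partition}, which asserts that as $y$ ranges over the proper factors of $\tfrac{g_I}{\pi_I}$ the sets $y\pi_I U$ partition the set of tails of $C_I$. The first sentence of the statement is literally a restatement of Corollary \ref{tail_partition}, so nothing new is needed there; I would simply cite it.

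For the count, I would argue as follows. Since the collection $\{\,y\pi_I U : y \mid \tfrac{g_I}{\pi_I},\ y \ne \tfrac{g_I}{\pi_I}\,\}$ is a partition of the set $T$ of tails of $C_I$ (Corollary \ref{tail_partition}), the sets in the collection are pairwise disjoint and their union is all of $T$. Hence $|T|$ is the sum of the cardinalities $|y\pi_I U|$ over the index set of proper factors $y$ of $\tfrac{g_I}{\pi_I}$. Substituting $|y\pi_I U| = \phi\big(\tfrac{m}{y\pi_I}\big)$ from Proposition \ref{tail_unit_relation} into this sum gives exactly
\[
|T| \;=\; \sum_{\substack{y \mid \frac{g_I}{\pi_I}\\ y \ne \frac{g_I}{\pi_I}}} \phi\!\left(\frac{m}{y\pi_I}\right),
\]
which is the claimed formula. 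One small point I would be explicit about is that distinct proper factors $y$ do index distinct blocks of the partition, so that the sum is genuinely over blocks and there is no overcounting; this is already built into the phrasing of Corollary \ref{tail_partition}, but it is worth a sentence, since the division $y\pi_I x$ is taken with representatives in $[0,m)$ and one wants to be sure that, e.g., $y_1\pi_I U$ and $y_2\pi_I U$ with $y_1 \ne y_2$ really are different subsets of $\Zm$.

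There is essentially no obstacle here: the entire content has been front-loaded into Proposition \ref{tail_unit_relation} and Corollary \ref{tail_partition}, and this corollary is a one-line deduction. If I wanted to make the write-up fully self-contained I might instead inline a short version of the cardinality computation $|y\pi_I U| = \phi\big(\tfrac{m}{y\pi_I}\big)$ (the same multiplication-by-$g$ and reduction-mod-$\tfrac{m}{g}$ trick used in Proposition \ref{size_dU}), but since that is already stated and used, citing it is cleaner. The only thing requiring any care is the convention about equivalence classes modulo $m$ versus integer representatives, so that "partition" is being asserted inside $\Zm$, exactly as in the hypotheses of Corollary \ref{tail_partition}.
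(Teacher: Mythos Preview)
Your proposal is correct and matches the paper's approach exactly: the paper states this corollary without proof, treating it as an immediate consequence of Corollary \ref{tail_partition} (the partition) together with the cardinality $|y\pi_I U| = \phi\!\left(\tfrac{m}{y\pi_I}\right)$ recorded in Proposition \ref{tail_unit_relation}. Your write-up is, if anything, more explicit than the paper's.
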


We now understand the structure of the tails and we have related the tails to the unit component $U$. Further, since $d_IU$ is the set of all non-tails, we can fully determine the structure of the component $C_I$. 

\begin{corollary} 
	The sets $d_IU$ and $y \pi_IU$ where $y$ is defined as above partition $C_I$. 	\end{corollary}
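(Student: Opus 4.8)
The plan is to combine two facts already in hand: that $C_I$ splits cleanly into its non-tails and its tails, and that each of these two pieces is itself a union of cosets of $U$ of the prescribed shape. First I would record that every vertex of $C_I$ is either a tail or a non-tail and that these two classes are disjoint; this is immediate from the definition together with Proposition \ref{tail_struc}, which identifies the non-tails of $C_I$ with $\{v \in C_I : g_I \mid v\}$ and the tails with $\{v \in C_I : g_I \nmid v\}$. Next I would invoke the explicit description $d_IU = \{g_Ix \bmod m : \gcd(x, m/g_I) = 1\}$ (the proposition computing $dU=S$, using $d_I = a_Ig_I$ with $a_I$ a unit, so $d_IU = g_IU$) to conclude that $d_IU$ is \emph{precisely} the set of non-tails of $C_I$. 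Thus $C_I = d_IU \sqcup (\text{tails of } C_I)$ as a disjoint union.

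Then I would apply Corollary \ref{tail_partition}: the sets $y\pi_IU$, as $y$ ranges over the proper factors of $g_I/\pi_I$, partition the tails of $C_I$. Substituting this partition of the tail part into the disjoint union above gives
\[
C_I = d_IU \ \sqcup\ \bigsqcup_{\substack{y \mid g_I/\pi_I \\ y \ne g_I/\pi_I}} y\pi_I U ,
\]
which is exactly the claimed partition. The only new cross-terms whose disjointness must be checked are $d_IU$ against each $y\pi_IU$, and these are disjoint because the former consists entirely of non-tails while the latter consists entirely of tails; disjointness among the $y\pi_IU$ themselves is already part of Corollary \ref{tail_partition}.

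A slicker way to phrase the same argument, which I would mention, is to note that taking $y = g_I/\pi_I$ in the formula $y\pi_IU = \{y\pi_Ix \bmod m : \gcd(x, m/(y\pi_I)) = 1\}$ of Proposition \ref{tail_unit_relation} yields exactly $g_IU = d_IU$; so the family $\{y\pi_IU : y \mid g_I/\pi_I\}$ over \emph{all} divisors of $g_I/\pi_I$ consists of the tail cosets together with $d_IU$, and since tails $\sqcup$ non-tails $= C_I$ we are done. Throughout I would assume $I \ne R$ (equivalently $\pi_I \ne 0$), the nilpotent component $C_R$ having been handled separately in the preceding remark; the degenerate case $I = \emptyset$ (where $g_I = \pi_I = 1$, there are no proper factors of $1$, and $C_I = U = d_IU$) is consistent with the statement.

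Since nearly every component of this argument is quoted from earlier results, I do not expect a real obstacle; the one point needing care is stating the identification ``non-tails of $C_I$ equals $d_IU$'' with the right hypotheses — in particular that $a_I$ can be taken to be a genuine unit mod $m$, so that $d_IU = g_IU$ — and making sure the indexing of $y$ matches exactly between Proposition \ref{tail_unit_relation}, Corollary \ref{tail_partition}, and the final statement.
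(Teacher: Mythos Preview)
Your proposal is correct and follows exactly the route the paper intends: the paper gives no separate proof for this corollary, treating it as immediate from the observation (stated at the start of the subsection) that $d_IU$ is precisely the set of non-tails of $C_I$ together with Corollary~\ref{tail_partition}, which partitions the tails into the sets $y\pi_IU$. Your handling of the edge cases $I=\emptyset$ and $I=R$ is more careful than the paper's own statement.
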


\begin{remark}
	This gives an identity
	$$
	\frac{\phi(m)}{\phi(\pi_I)} = \sum_{y | \frac{g_I}{\pi_I}} \phi\left(\frac{m}{y\pi_I}\right).
	$$
\end{remark}

\begin{theorem} 
	For simplicity of notation, suppose $\pi= p_1 \cdots p_s$ and let $y | \frac{g}{\pi}$. Say $\pi y = p_{1}^{k_1} \cdots p_{s}^{k_s}$. Call $T_{\pi y}$ the set of tails of the form $\pi y u$ for $u \in U$. Recall that $T_{\pi y}$ is a set of equivalence classes modulo $m$. Then 
	$$
	T_{\pi y} \cong p_{1}^{k_1} (\Z / p_{1}^{e_1}\Z)^\times \cdots \times p_s^{k_s} (\Z / p_s^{e_s}\Z)^\times \times (\Z / p_{s+1}^{e_{s+1}}\Z)^\times \cdots (\Z / p_r^{e_r}\Z)^\times.
	$$
\end{theorem}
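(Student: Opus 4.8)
The plan is to reduce everything to the Chinese Remainder Theorem, exactly as in the proof of Theorem~\ref{dU_struc}. Write $\rho\colon \Zm \to \prod_{i=1}^r \Z/p_i^{e_i}\Z$ for the ring isomorphism $a \mapsto (a \bmod p_1^{e_1},\dots,a\bmod p_r^{e_r})$. Then $\rho(U) = \prod_{i=1}^r (\Z/p_i^{e_i}\Z)^\times$, and since $\rho$ is a ring isomorphism, for any $t\in\Zm$ we have
$$
\rho(tU) \;=\; \prod_{i=1}^r \Big( (t\bmod p_i^{e_i})\,(\Z/p_i^{e_i}\Z)^\times \Big),
$$
because $\rho(tu)=\rho(t)\rho(u)$ with multiplication taken coordinatewise and $\rho(u)$ ranges over $\prod_i(\Z/p_i^{e_i}\Z)^\times$ as $u$ ranges over $U$. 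So it suffices to identify $\rho(\pi y)$ coordinate by coordinate and to compute the image of each unit group under multiplication by it.

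Second, I would record the elementary divisibility facts about $\pi y$: since $\pi \mid \pi y$ and (because $y \mid g/\pi$) also $\pi y \mid g = p_1^{e_1}\cdots p_s^{e_s}$, the exponents in $\pi y = p_1^{k_1}\cdots p_s^{k_s}$ satisfy $1 \le k_i \le e_i$, and $p_i \nmid \pi y$ for $i > s$. Hence $\pi y \bmod p_i^{e_i} = p_i^{k_i}$ for $i \le s$, whereas for $i > s$ the residue $w_i := \pi y \bmod p_i^{e_i}$ is a unit of $\Z/p_i^{e_i}\Z$.

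Third, I would assemble the image. For $i > s$, multiplication by the unit $w_i$ permutes $(\Z/p_i^{e_i}\Z)^\times$, so that coordinate contributes $(\Z/p_i^{e_i}\Z)^\times$; for $i \le s$ the coordinate contributes $p_i^{k_i}(\Z/p_i^{e_i}\Z)^\times$ by the very definition of that set. Substituting into the displayed formula with $t = \pi y$ gives
$$
T_{\pi y} = \rho^{-1}\!\left(\textstyle\prod_{i=1}^r \big((\pi y\bmod p_i^{e_i})(\Z/p_i^{e_i}\Z)^\times\big)\right) \cong \prod_{i=1}^s p_i^{k_i}(\Z/p_i^{e_i}\Z)^\times \times \prod_{i=s+1}^r (\Z/p_i^{e_i}\Z)^\times,
$$
which is the asserted decomposition. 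As a sanity check one can count cardinalities: the $i$-th factor has size $\phi(p_i^{e_i-k_i})$ for $i \le s$ and $\phi(p_i^{e_i})$ for $i > s$, whose product is $\phi\!\left(m/\pi y\right)$, matching Proposition~\ref{tail_unit_relation}.

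There is no genuine obstacle here; the argument is a direct CRT computation paralleling Theorem~\ref{dU_struc}. The one point worth stating explicitly is the meaning of $\cong$: as in Theorems~\ref{comp_struc} and~\ref{dU_struc}, it denotes the bijection of subsets induced by CRT, not a group isomorphism, since the map $u \mapsto \pi y u$ is not injective on $U$ and the sets $p_i^{k_i}(\Z/p_i^{e_i}\Z)^\times$ (hence $T_{\pi y}$) fail to be closed under multiplication once some $k_i \ge 1$. This collapsing is confined to the coordinates $i \le s$, and it is exactly what the right-hand side records.
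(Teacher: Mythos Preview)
Your argument is correct and is exactly the CRT computation the paper intends: its own proof is the single line ``This follows as in the proof of Theorem~\ref{dU_struc}.'' One small slip worth fixing: for $i\le s$ the residue $\pi y \bmod p_i^{e_i}$ is $p_i^{k_i}$ times a unit (the product $\prod_{j\ne i,\,j\le s} p_j^{k_j}$), not literally $p_i^{k_i}$; but that unit is absorbed upon multiplying by $(\Z/p_i^{e_i}\Z)^\times$, so the identification $p_i^{k_i}(\Z/p_i^{e_i}\Z)^\times$ and the rest of the argument stand unchanged.
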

\begin{proof}
	This follows as in the proof of Theorem \ref{dU_struc}.
\end{proof}

We now understand both the explicit and algebraic structure of the sets $d_IU$ and $T_{\pi_I y}$. Therefore, we understand the structure of the connected component $C_I$. 
\begin{theorem}
	Let $C_I$ be a component determined by $I$. Then
	$$
	C_I \cong d_IU \cup \bigcup_{y | \frac{g_I}{\pi_I}, y \ne \frac{g_I}{\pi_I}} T_{\pi_I y}. 	
	$$
\end{theorem}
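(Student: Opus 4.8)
The plan is to assemble this statement from the partition result and the structural descriptions already in hand, so the proof is essentially bookkeeping rather than new work. First I would invoke Corollary~\ref{tail_partition} together with the unlabeled corollary immediately preceding this theorem, which assert that $d_IU$ and the sets $y\pi_IU$, as $y$ ranges over the proper divisors of $\frac{g_I}{\pi_I}$, partition $C_I$ when all of these are regarded as collections of residue classes modulo $m$. By Proposition~\ref{tail_unit_relation} each set $y\pi_IU$ coincides with $T_{\pi_I y}$, so the right-hand side of the display is exactly $C_I$ as a subset of $\Zm$, and the union is disjoint. This establishes the equality $C_I = d_IU \cup \bigcup_{y\mid \frac{g_I}{\pi_I},\, y\ne \frac{g_I}{\pi_I}} T_{\pi_I y}$ of vertex sets.

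Next I would promote this set equality to the stated isomorphism by citing the two structure theorems: Theorem~\ref{dU_struc} identifies $d_IU$ with $\prod_{j\in R\setminus I}(\Z/p_j^{e_j}\Z)^\times$, and the theorem immediately preceding this one identifies each $T_{\pi_I y}$ with $\prod_{i\le s} p_i^{k_i}(\Z/p_i^{e_i}\Z)^\times \times \prod_{i>s}(\Z/p_i^{e_i}\Z)^\times$, where $\pi_I y = p_1^{k_1}\cdots p_s^{k_s}$. Since the right-hand side of the displayed formula is to be read as the disjoint union of these pieces, each carrying the group structure just recorded, and the left side decomposes in exactly the same way, the two sides are identified. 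In writing this up I would make the meaning of ``$\cong$'' explicit — equality of vertex sets, with each summand endowed with the group structure established above — since a disjoint union of several groups is not itself a group and the symbol would otherwise be ambiguous.

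The only point that needs genuine care is the behavior at the extreme choices of $I$. When $I=\emptyset$ we have $\pi_I = g_I = 1$, the index set $\{y : y\mid \frac{g_I}{\pi_I},\ y\ne \frac{g_I}{\pi_I}\}$ is empty, and the formula collapses to $C_\emptyset = d_\emptyset U = U$, which is consistent. When $I=R$ we have $\pi_R = 0$ and $C_R$ is the nilpotent component; here the parametrization by $y\pi_I U$ breaks down and was explicitly excluded in Proposition~\ref{tail_unit_relation}, so I would either restrict the hypothesis to $I\subsetneq R$ or handle $C_R$ separately, using the earlier Remark that $C_R\setminus\{0\}$ consists entirely of tails, $d_R = 0$, and $d_RU = \{0\}$. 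Beyond stating these conventions cleanly I expect no obstacle, since every substantive claim has already been proved.
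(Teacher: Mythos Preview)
Your proposal is correct and matches the paper's approach: the paper states this theorem without a separate proof, treating it as an immediate consequence of the partition corollary (that $d_IU$ and the sets $y\pi_IU$ partition $C_I$) together with the structure theorems for $d_IU$ and $T_{\pi_I y}$, exactly as you assemble it. Your added care about the meaning of ``$\cong$'' and the boundary cases $I=\emptyset$, $I=R$ is welcome; one small slip is that $\pi_R$ is the radical $p_1\cdots p_r$, not $0$, so the issue at $I=R$ is not that $\pi_R$ vanishes but that Proposition~\ref{tail_unit_relation} was only proved for $I\subsetneq R$.
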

This recovers Theorem 3.9 in \cite{chakrabarty}. 

\subsubsection{Other propositions about tails}
\begin{prop}
	Let $e$ be the maximum exponent in the factorization of $d_I$. Then there are tails of length $e - 1$; that is, there exists a sequence $t \rightarrow t^2 \rightarrow \cdots \rightarrow t^{e-1}$ such that all elements in the sequence are tails.
\end{prop}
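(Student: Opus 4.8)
The plan is to produce the tail explicitly: I claim the orbit of the multiplier $\pi_I$ itself has a tail of length $e-1$, so $t=\pi_I$ works. First a word on the meaning of $e$: since $d_I=a_Ig_I$ with $\gcd(a_I,m/g_I)=1$, the ``essential'' prime powers of $d_I$ are exactly those of $g_I=p_{i_1}^{e_{i_1}}\cdots p_{i_s}^{e_{i_s}}$, so I read $e=\max_j e_{i_j}$, the largest exponent occurring in $g_I$; fix $j_0$ with $e_{i_{j_0}}=e$ and set $p=p_{i_{j_0}}$.

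\emph{Step 1: every power of $\pi_I$ lies in $C_I$.} The orbit $\pi_I,\pi_I^2,\pi_I^3,\dots$ (each reduced mod $m$) consists of vertices joined by consecutive edges of the sequential power graph, hence all lie in the connected component of $\pi_I$, which is $C_I$ by Theorem~\ref{mult_comp}. \emph{Step 2: for $1\le k\le e-1$, the element $\pi_I^k\in\Zm$ is a tail.} By Theorem~\ref{tail_struc} it suffices to check $g_I\nmid \pi_I^k$. Since $g_I\mid m$, divisibility of the residue class $\pi_I^k\bmod m$ by $g_I$ is equivalent to divisibility of the integer $\pi_I^k$ by $g_I$; and the exponent of $p$ in $\pi_I^k$ equals $k\le e-1<e$, which is the exponent of $p$ in $g_I$, so $g_I\nmid\pi_I^k$. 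Thus $\pi_I,\pi_I^2,\dots,\pi_I^{e-1}$ are all tails of $C_I$, and since the edges $\pi_I^k\to\pi_I^{k+1}$ are present by definition, we obtain the required sequence of length $e-1$.

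For completeness I would also note the tail has length \emph{exactly} $e-1$: because $e\ge e_{i_j}$ for every $j$, the integer $\pi_I^e$ is divisible by $g_I$, hence so is its class mod $m$, and therefore $\pi_I^e$ is not a tail by Theorem~\ref{tail_struc} — it already lies in the cycle of $\pi_I$. (When $I=R$ this is just $\pi_R^e\equiv 0=d_R\pmod m$, so the argument covers the nilpotent component too.)

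There is no substantive obstacle here; the two points needing care are the interpretation of $e$ (it is the top exponent of $g_I=\gcd(d_I,m)$, not of an arbitrary integer representative of $d_I$, which may acquire spurious prime factors — e.g.\ $d_I=9$ for $m=12$), and the harmless passage between $g_I\mid v$ in $\Zm$ and $g_I\mid v$ in $\Z$, valid precisely because $g_I\mid m$.
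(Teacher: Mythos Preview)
Your proof is correct and follows essentially the same approach as the paper: both take $t=\pi_I$ and argue that $g_I\nmid\pi_I^k$ for $1\le k\le e-1$, invoking Theorem~\ref{tail_struc}. Your write-up is more careful than the paper's (you pin down the interpretation of $e$, justify divisibility passing between $\Z$ and $\Zm$, and verify the length is exactly $e-1$), but the underlying idea is identical.
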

\begin{proof}
	Consider the tail $\pi_I$. Then $g_I\not | \pi_I^i$ for all $i =1, \ldots , e-1$. Therefore $\pi_I$ starts a tail of length $e-1$. 
\end{proof}

\begin{prop}
	Let $T_i = \{t \,: \, t \text{ is a tail, } \pi^i | t \text{ and } \pi^{i+1} \nmid t\}$. The tails form layers $T_1, T_2, \ldots, T_{e-1}$ so that tails in $T_i$ have length $\lceil \frac{e}{i} \rceil-1$. The sets $\{T_1, \ldots, T_{e-1}\}$ form another (possibly different) partition of the set of tails. 
\end{prop}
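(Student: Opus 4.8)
The plan is to pin down the exact length of the tail of an arbitrary tail element and then read off what this says about the layers $T_i$. Throughout I keep the simplified notation of the preceding theorem: $\pi=p_1\cdots p_s$, $g=g_I=p_1^{e_1}\cdots p_s^{e_s}$, and $e=\max_{1\le j\le s}e_j$. By Corollary~\ref{tail_partition} and Proposition~\ref{tail_unit_relation}, every tail of $C_I$ is $\pi y u$ for a unique proper divisor $y$ of $\frac{g}{\pi}$ and some $u\in U$; equivalently, a tail $t$ has, among the primes $p_1,\dots,p_s$, the exact factorization $p_1^{a_1}\cdots p_s^{a_s}$ with $1\le a_j\le e_j$ for all $j$ and $a_j<e_j$ for at least one $j$ (the latter since $g\nmid t$, by Theorem~\ref{tail_struc}).

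First I would establish the length formula. For a tail $t$ as above and any $\ell\ge 1$, the representative of $t^\ell$ in $[0,m)$ is divisible by $p_j^{e_j}$ precisely when $\ell a_j\ge e_j$: by CRT this is local at $p_j$, where $t^\ell\equiv p_j^{\ell a_j}\cdot(\text{unit})\pmod{p_j^{e_j}}$. Hence $g\mid t^\ell$ iff $\ell\ge\lceil e_j/a_j\rceil$ for every $j$, so by the cycle criterion established in the proof of Theorem~\ref{tail_struc} (``$v$ lies on a cycle iff $g\mid v$'') the least power of $t$ lying on its cycle is $t^{\ell_0}$ with $\ell_0=\max_{1\le j\le s}\lceil e_j/a_j\rceil$; since $a_j<e_j$ for some $j$ we have $\ell_0\ge 2$. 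Therefore the tail of $t$ is $t,t^2,\dots,t^{\ell_0-1}$, of length $\ell_0-1=\max_j\lceil e_j/a_j\rceil-1$.

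Next I would handle the layering. By definition $t\in T_i$ iff $\pi^i\mid t$ and $\pi^{i+1}\nmid t$, i.e. iff $\min_j a_j=i$. Since $\pi\mid t$ we have $i\ge 1$; since $a_j<e_j\le e$ for some $j$ we have $i\le e-1$; and $\pi^i$ is itself a tail for each $1\le i\le e-1$ (as $g\nmid\pi^i$), so each such $T_i$ is nonempty. As every tail falls into exactly one $T_i$, the sets $T_1,\dots,T_{e-1}$ partition the tails, a coarsening of the $y\pi U$ partition of Corollary~\ref{tail_partition}. Finally, for $t\in T_i$ every $a_j\ge i$, so by monotonicity of $x\mapsto\lceil x/i\rceil$ the length $\max_j\lceil e_j/a_j\rceil-1$ is at most $\lceil e_j/i\rceil-1\le\lceil e/i\rceil-1$, and this bound is attained at $t=\pi^i$ (where every $a_j=i$ and the index realizing $e_j=e$ contributes $\lceil e/i\rceil$).

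The step I expect to be the genuine obstacle is the claim that the length is \emph{constant} on each layer, not merely bounded above by $\lceil e/i\rceil-1$. This holds when $\pi_I$ is prime, i.e. $g_I$ is a prime power (then $s=1$ and the layer index is exactly $a_1$), and more generally when all exponents $e_j$ in $g_I$ coincide; but it fails in general — e.g. with $g_I=p_1^3p_2^2$ the multiplier $\pi=p_1p_2$ has tail length $2$ while $p_1^2p_2u$ lies in the same layer $T_1$ yet has tail length $1$. So I would either prove the proposition under the hypothesis that $g_I$ is a prime power, or state it in the robust form justified above: $T_1,\dots,T_{e-1}$ partition the tails and $\lceil e/i\rceil-1$ is the maximal tail length occurring in $T_i$ (realized by $\pi^i$), with equality throughout $T_i$ precisely in the prime-power case.
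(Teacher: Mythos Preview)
Your approach---reduce to the cycle criterion ``$v$ lies on its cycle iff $g_I\mid v$'' from Theorem~\ref{tail_struc} and compute the least $\ell$ with $g_I\mid t^\ell$ prime by prime---is exactly what the paper does, only the paper compresses it to two sentences: ``Set $k=\lceil e/i\rceil$. It is clear that $g_I\mid t^k$ but $g_I$ does not divide $t^{k-1}$.'' You have been more careful than the paper, and in fact you have caught a genuine defect in the stated proposition rather than merely a gap in your own argument. The first assertion $g_I\mid t^k$ follows from $a_j\ge i$ and $e_j\le e$, as you show; but the second assertion $g_I\nmid t^{k-1}$ is precisely what your counterexample $g_I=p_1^3p_2^2$, $t=p_1^2p_2u\in T_1$ refutes (e.g.\ $m=360$, $t=12$: here $t^2=144$ is already divisible by $g_I=72$, so the tail has length $1$, not $\lceil 3/1\rceil-1=2$). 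Your diagnosis is correct: the constant-length claim holds when $|I|=1$ or when all $e_j$ with $j\in I$ coincide, but in general only the weaker statement survives---$T_1,\dots,T_{e-1}$ partition the tails, and $\lceil e/i\rceil-1$ is the \emph{maximum} tail length on $T_i$, attained at $\pi^i$. Your partition argument and your verification that each $T_i$ is nonempty (via $\pi^i$) are both sound.
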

\begin{proof}
	Set $k= \lceil \frac{e}{i} \rceil$. It is clear that $g_I | t^{k}$ but $g_I$ does not divide $t^{k-1}$. Further, it is clear that $T_i \cap T_j = \emptyset$ when $i \ne j$ and that the sets $T_i$ cover all tails. 
\end{proof}

\begin{prop} Let $T_1^k  = \{t_j^k \, : \, t_j \in T_1 \}$. Then $T_1^k \subset T_k$.
\end{prop}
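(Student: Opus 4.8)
The plan is to reduce everything to coordinatewise statements about $p$-adic valuations. Using the decomposition $C_I \cong p_{i_1}\bigl(\Z/p_{i_1}^{e_{i_1}}\Z\bigr)\times\cdots\times p_{i_s}\bigl(\Z/p_{i_s}^{e_{i_s}}\Z\bigr)\times U_n$ of Theorem \ref{comp_struc} (with $n=m/g_I$), for $v\in C_I$ and $j\in I$ write $\nu_j(v)\in\{1,\dots,e_j\}$ for the $p_j$-adic valuation of the $p_j$-coordinate of $v$, with the convention $\nu_j(v)=e_j$ when that coordinate is $0$. Then $\pi_I^{\,a}\mid v$ in $\Zm$ exactly when $\nu_j(v)\ge\min(a,e_j)$ for every $j\in I$, and by Theorem \ref{tail_struc} the element $v$ is a tail exactly when $g_I\nmid v$, that is, when $\nu_j(v)<e_j$ for some $j\in I$. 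In this language, $t\in T_1$ says precisely that $t\in C_I$ and $\nu_{j_0}(t)=1$ for some $j_0\in I$ with $e_{j_0}\ge 2$, and the assertion $T_1^k\subseteq T_k$ is to be read for $1\le k\le e-1$, the range in which $T_k$ is defined.

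The first step, which is routine, is the transformation rule $\nu_j(t^k)=\min\bigl(k\,\nu_j(t),\,e_j\bigr)$, immediate from the coordinatewise description of multiplication. Since $\nu_j(t)\ge 1$ for every $j\in I$, this already yields $\nu_j(t^k)\ge\min(k,e_j)$ for all $j\in I$, that is, $\pi_I^{\,k}\mid t^k$; this is the easy half of the membership $t^k\in T_k$.

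The harder half is to produce one index $j_0\in I$ that simultaneously certifies $\pi_I^{\,k+1}\nmid t^k$ and that $t^k$ is still a tail, and this is the step I expect to be the main obstacle. The natural choice is an index $j_0$ with $\nu_{j_0}(t)=1$ and $e_{j_0}\ge 2$, which exists because $t\in T_1$. Then $\nu_{j_0}(t^k)=\min(k,e_{j_0})$, and provided $k\le e_{j_0}-1$ this equals $k$, which is at once $<k+1$ and $<e_{j_0}$; hence $\nu_{j_0}(t^k)<\min(k+1,e_{j_0})$, giving $\pi_I^{\,k+1}\nmid t^k$, and $\nu_{j_0}(t^k)<e_{j_0}$, giving $g_I\nmid t^k$ so that $t^k$ is a tail. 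The delicate point is the inequality $k\le e_{j_0}-1$: when $|I|=1$ (so $\pi_I$ is a single prime) we may take $e_{j_0}=e$, and $k\le e-1$ makes it automatic; in general one must check that among the valuation-one coordinates of $t$ there is one whose exponent in $m$ exceeds $k$, so in the multi-prime case this is where the argument must be carried out with care rather than merely asserted. Granting this, combining $\pi_I^{\,k}\mid t^k$, $\pi_I^{\,k+1}\nmid t^k$, and $g_I\nmid t^k$ shows $t^k\in T_k$, hence $T_1^k\subseteq T_k$.
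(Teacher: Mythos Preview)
You have correctly isolated the crux, and your instinct that the multi-prime case needs care is exactly right---so right, in fact, that the step cannot be completed: the statement is false when $|I|>1$. Take $m=2^{2}\cdot 3^{4}\cdot 5=1620$ and let $I$ be the index set for the primes $2$ and $3$, so $\pi_I=6$, $g_I=324$, and $e=4$. The element $t=18=2\cdot 3^{2}$ lies in $C_I$, is a tail since $324\nmid 18$, and lies in $T_1$ since $6\mid 18$ but $36\nmid 18$. Yet $t^{2}\equiv 324\pmod{1620}$ and $g_I=324\mid 324$, so by Theorem~\ref{tail_struc} the element $t^{2}$ is \emph{not} a tail; hence $t^{2}\notin T_2$ even though $2\le e-1$. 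The obstruction is precisely the one you flagged: the only coordinate of $t$ with valuation $1$ is at the prime $2$, where $e_{j_0}=2$, so already for $k=2$ the inequality $k\le e_{j_0}-1$ fails and $t^{k}$ lands in $d_IU$.

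The paper's own one-line argument checks only the divisibility conditions $\pi^{k}\mid t^{k}$ and $\pi^{k+1}\nmid t^{k}$ and never verifies that $t^{k}$ remains a tail, so it has the same gap. When $|I|=1$ your argument goes through cleanly: if $\pi_I=p$ with exponent $e$, then $\nu_p(t)=1$ forces $\nu_p(t^{k})=k<e$ for all $k\le e-1$, giving simultaneously $\pi^{k+1}\nmid t^{k}$ and $g_I\nmid t^{k}$. The proposition (and likewise the preceding claim that every tail in $T_i$ has length $\lceil e/i\rceil-1$) is therefore correct only under the hypothesis $|I|=1$; rather than ``granting'' the delicate inequality, you should state this as an explicit assumption.
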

\begin{proof}
	Start with $t_j^k \in T_1^k$ for some $k < e$. Then $t_j^k \in T_k$ since $\pi^k | t_j^k$ but $\pi^{k+1}$ does not. 
\end{proof}

\subsubsection{Asymptotic behavior of the tails and the number of components}
In \cite{finchidem}, Finch considers the asymptotic behavior of certain sets of $\Zm$ relating to the sequence $x, x^2, x^3, \ldots$. These asymptotic expressions allow us to determine the average behavior of these sets, which allows us to better understand the average structure of the sequential power graphs. Though we have formulas for the number of elements in each set mentioned below for a fixed $m$, it is useful to understand the behavior of the graphs independent of $m$ as well.

Finch defines the set
$$
a(m) = \#\{x \in \Zm \, : \, x^{k+1} = x \text{ for some } k  \geq 1\}
$$
and finds that 
$$
\sum_{m\leq N} a(m) \sim A \cdot N^2
$$
where $A \approx .4408\ldots$ is a known constant.

Note that by definition, $a(m)$ is the number of non-tails in $\Zm$. Therefore on average, the number of non-tails in $\Zm$ is $A \cdot |\Zm|$, which implies that on average, the density of non-tails is $44.08\%$ and the density of tails is $55.92\%$.

We recall that for a fixed $m$, we can determine the exact number of tails using Corollary \ref{num_tails}. We obtain
$$
\sum_{I} \sum_{\substack{y | \frac{g_I}{\pi_I}\\ y \ne \frac{g_I}{\pi_I}}} \phi(\frac{m}{y \pi_I})
$$
tails. 
In \cite{finchsquares}, Finch gives an asymptotic formula for $|\Zm^\times|$ as follows
$$
\sum_{m \leq N} |\Zm^\times| = \sum_{m \leq N} \phi(m) \sim \frac{3}{\pi^2} N.
$$
This implies that on average, approximately $30.4\%$ of elements in $\Zm$ are units. 

Given a fixed $m$, we know there are exactly $\phi(m) = m (1- \frac{1}{p_1}) \cdots (1- \frac{1}{p_r})$ units. Also, given $m$, we can understand the number of elements in each component $C$ by using Corollary \ref{num_tails} and Proposition \ref{size_dU}. 

In the papers \cite{finchidem} and \cite{finchsquares}, Finch also finds that the average number of idempotents (and therefore the average number of components) in $\Zm$ is $\frac{6}{\pi^2} \ln(N)$, the average number of images of $y \mapsto y^2$ in $\Zm$ is approximately $.376 N (\ln N)^{-\frac{1}{2}}$, and the average number of images of $y \mapsto y^3$ in $\Zm$ is approximately $.484 N (\ln N)^{-\frac{1}{3}}$. Note that the last two asymptotics give an idea of how many elements should occur in each ``layer" of the sequential power graph. These papers and \cite{finch10} give several other interesting asymptotic formulas for various other sets involving exponentials.

\subsection{Summing elements in components}
In this section, we determine a closed form for the sum of elements in each component. Here we take the standard representatives $x \in \Z/ m\Z$ so that $0 \leq x < m$ and we treat these as elements in $\Z$. We proved closed formulas for both the sum of the elements in $d_IU$ and the sum of all elements in $C_I$. The first proposition is well-known.
\begin{prop} Let $U$ be the group of units in $\Z/ m\Z$. Then
	$$\sum_{u \in U} u = \frac{m\phi(m)}{2}.$$
\end{prop}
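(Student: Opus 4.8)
The plan is to use the classical pairing argument: units come in pairs $\{u, m-u\}$. First I would note that $\gcd(u,m)=1$ if and only if $\gcd(m-u,m)=1$, so the map $u \mapsto m-u$ is an involution on $U$ (as a set of representatives in $[0,m)$). Then I would check that this involution has no fixed points when $m > 2$: if $u = m-u$ then $2u = m$, forcing $p \mid u$ for every odd prime $p \mid m$ and also forcing $m$ even, so $\gcd(u,m) > 1$ unless $m \in \{1,2\}$, which are trivial edge cases handled separately.

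Given a fixed-point-free involution on the $\phi(m)$ units, the units split into $\phi(m)/2$ pairs, each summing to $m$. Hence $\sum_{u \in U} u = \frac{m\phi(m)}{2}$. I would write this as: pair each $u$ with $m-u$; since $u + (m-u) = m$ and there are $\phi(m)/2$ such pairs,
\[
\sum_{u \in U} u = \frac{\phi(m)}{2} \cdot m = \frac{m\phi(m)}{2}.
\]

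The only real subtlety — and the main thing to be careful about — is the small-$m$ behavior: for $m = 1$ we have $U = \{0\}$ (or the empty sum, depending on convention) and $\phi(1) = 1$, and for $m = 2$ we have $U = \{1\}$ with $\phi(2) = 1$, so the sum is $1 = \frac{2 \cdot 1}{2}$; in both these cases the formula still holds by direct inspection even though the pairing $u \leftrightarrow m-u$ degenerates. For $m > 2$, $\phi(m)$ is even and the pairing is genuinely fixed-point-free, so the argument goes through cleanly. I would state the $m>2$ case via the involution and remark that $m \le 2$ is checked directly.

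I do not anticipate any real obstacle here; this is a standard fact and the proof is a three-line pairing argument. The one place to exercise a little care is justifying that $m-u$ is again a unit and that no unit is its own partner, both of which are immediate from elementary number theory.
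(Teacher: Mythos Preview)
Your proposal is correct and follows essentially the same pairing argument as the paper: verify $m=2$ directly, then for $m>2$ pair each unit $u$ with $m-u$ and use that $\phi(m)$ is even. Your write-up is slightly more careful (you explicitly argue there are no fixed points and also dispose of $m=1$), but the idea is identical.
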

\begin{proof}
	This is clearly true if $m =2$. Now suppose $m > 2$. Observe that $u \in U$ if and only if $m-u \in U$. Since $\phi(m)$ is even when $m >2$, then $(u, m-u)$ must always come in pairs. We have $\frac{\phi(m)}{2}$ such pairs and $u + m- u = m$. 
\end{proof}

Now we show the formula for summing all elements in $d_I U$. 
\begin{prop} \label{dU_sum}
	If $d_I\ne 0$, then $\sum_{v \in d_IU} v = \frac{1}{\phi(g_I)}\sum_{u \in U} u = \frac{m\phi(m)}{2\phi(g_I)}$.
\end{prop}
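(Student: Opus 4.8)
The plan is to lean on the explicit description of $d_IU$ proved earlier, namely $d_IU = \{g_Ix \pmod{m} \,:\, \gcd(x,\tfrac{m}{g_I})=1\}$ with $g_I=\gcd(d_I,m)$, and to observe that the canonical representatives of these elements are obtained without any reduction. Concretely, I would let $x$ range over the least positive reduced residues $1\le x\le m/g_I$ with $\gcd(x,m/g_I)=1$. For such $x$ the products $g_Ix$ are pairwise distinct modulo $m$ (dividing $g_Ix\equiv g_Ix'\pmod m$ by $g_I$ gives $x\equiv x'\pmod{m/g_I}$), each satisfies $0<g_Ix<m$ (here one uses $d_I\ne 0$, so $I\ne R$ and $m/g_I\ge 2$, whence $x<m/g_I$), and there are exactly $\phi(m/g_I)=|d_IU|$ of them by Proposition \ref{size_dU}. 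Hence the $g_Ix$ are precisely the standard integer representatives of $d_IU$, and
\[
\sum_{v\in d_IU} v \;=\; g_I\!\!\sum_{\substack{1\le x\le m/g_I\\ \gcd(x,\,m/g_I)=1}}\!\! x.
\]

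Next I would evaluate the inner sum. Since $m/g_I\ge 2$, applying the preceding (well-known) proposition $\sum_{u\in U}u=\frac{m\phi(m)}{2}$ with modulus $n=m/g_I$ gives $\sum_{\gcd(x,n)=1,\,1\le x\le n}x=\frac{n\phi(n)}{2}$. Substituting back yields
\[
\sum_{v\in d_IU} v \;=\; g_I\cdot\frac{(m/g_I)\,\phi(m/g_I)}{2}\;=\;\frac{m\,\phi(m/g_I)}{2}.
\]

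Finally I would rewrite $\phi(m/g_I)$ in the claimed form. Since $g_I=p_{i_1}^{e_{i_1}}\cdots p_{i_s}^{e_{i_s}}$ and $m/g_I=\prod_{j\notin I}p_j^{e_j}$ are built from disjoint sets of primes, $\gcd(g_I,m/g_I)=1$, so multiplicativity of Euler's totient gives $\phi(m)=\phi(g_I)\phi(m/g_I)$, i.e. $\phi(m/g_I)=\phi(m)/\phi(g_I)$. Plugging this in gives $\sum_{v\in d_IU}v=\frac{m\phi(m)}{2\phi(g_I)}=\frac{1}{\phi(g_I)}\cdot\frac{m\phi(m)}{2}=\frac{1}{\phi(g_I)}\sum_{u\in U}u$, as required. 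The only point requiring genuine care is the first step — confirming that the integers $g_Ix$, as $x$ runs over a reduced residue system mod $m/g_I$, are exactly the canonical representatives of $d_IU$ in $[0,m)$ with no wraparound and no collisions — and this follows immediately from the explicit description combined with the cardinality count $|d_IU|=\phi(m/g_I)$; the rest is a two-line computation.
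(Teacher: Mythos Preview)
Your proof is correct and rests on the same underlying idea as the paper's: identify the canonical representatives of $d_IU$ via the explicit description $d_IU=\{g_Ix \pmod m:\gcd(x,m/g_I)=1\}$, then apply the sum-of-units formula at modulus $m/g_I$ together with $\phi(m)=\phi(g_I)\phi(m/g_I)$.

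Your write-up is in fact cleaner than the paper's. The paper argues via the $\phi(g_I)\!:\!1$ homomorphism $U\to d_IU$ and writes the chain
\[
\sum_{v\in d_IU} v \;=\; \sum_{v\in(\Z/\tfrac{m}{g_I}\Z)^\times} v \;=\; \frac{1}{\phi(g_I)}\sum_{u\in U} u,
\]
but taken literally neither intermediate equality is correct: the first is off by a factor $g_I$ and the second by $1/g_I$, and these happen to cancel. Your argument, by contrast, pins down that the integers $g_Ix$ with $1\le x<m/g_I$ and $\gcd(x,m/g_I)=1$ are exactly the representatives in $[0,m)$ (using $d_I\ne 0$ to ensure $m/g_I\ge 2$ and hence no wraparound), and then the factor $g_I$ appears explicitly and correctly.
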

\begin{proof}
	The homomorphism $H: U \rightarrow d_IU$ is a $\phi(g_I) : 1$ map by since $U \cong \left(\Z/ m\Z\right)^\times$ and $d_IU \cong \left(\Z / \frac{m}{g_I} \Z\right)^\times$. Therefore
	\begin{align*}
	\sum_{v \in d_IU} v  &= \sum_{v \in \left(\Z / \frac{m}{g_I} \Z\right)^\times} v\\
	&= \frac{1}{\phi(g_I)} \sum_{v \in \left(\Z/ m\Z\right)^\times }v\\
	&= \frac{1}{\phi(g_I)} \sum_{u \in U}u
	\end{align*}
\end{proof}

We now work toward proving a closed formula for the sum of all elements in $C_I$. We begin by using Inclusion-Exclusion.
\begin{lemma}
	Fix a component $C_I$. Let $f_{J}$ denote the number of prime divisors in $\frac{\pi_J}{\pi_I}$ when $\pi_I| \pi_J$ and let $M_J = \frac{m}{\pi_J}$. Then the sum of the elements in component $C_I$ is given by
	$$
	\sum_{I \subseteq J\subseteq R} (-1)^{f_J} \pi_J \binom{M_J}{2} 
	$$
\end{lemma}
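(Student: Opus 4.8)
The plan is to compute the sum over $C_I$ by first understanding which elements of $\Zm$ lie in $C_I$, then applying inclusion–exclusion to "peel off" the over-counting. Recall from the proposition in Section 5 that $C_I = \{\pi_I x \pmod m : \gcd(x, m/g_I) = 1\}$; equivalently, an element $v$ lies in $C_I$ exactly when $\pi_I \mid v$ and no prime $p_j$ with $j \in R\setminus I$ divides $v$. So the natural starting point is the larger set $A_I := \{v \in \Zm : \pi_I \mid v\}$, whose elements are $\pi_I, 2\pi_I, \ldots, (M_I - 1)\pi_I$ where $M_I = m/\pi_I$; these sum to $\pi_I(0 + 1 + \cdots + (M_I-1)) = \pi_I\binom{M_I}{2}$. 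The set $C_I$ is obtained from $A_I$ by deleting those multiples of $\pi_I$ that are also divisible by some additional prime $p_j$, $j \notin I$; a multiple of $\pi_I$ divisible by $p_j$ is exactly a multiple of $\pi_{I \cup \{j\}}$.

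The key step is then a standard inclusion–exclusion over the set of "extra" primes $R \setminus I$. For $J$ with $I \subseteq J \subseteq R$, write $A_J = \{v \in \Zm : \pi_J \mid v\}$; its element sum is $\pi_J\binom{M_J}{2}$ with $M_J = m/\pi_J$ as defined. Since $C_I = A_I \setminus \bigcup_{j \in R\setminus I} A_{I\cup\{j\}}$ and since $A_{J}\cap A_{J'} = A_{J \cup J'}$ (a multiple of $\pi_J$ and of $\pi_{J'}$ is a multiple of $\mathrm{lcm}(\pi_J,\pi_{J'}) = \pi_{J\cup J'}$ because the $p_i$ are distinct primes), inclusion–exclusion on the sum-of-elements function — which is additive over disjoint unions — gives
$$
\sum_{v \in C_I} v = \sum_{I \subseteq J \subseteq R} (-1)^{|J \setminus I|} \sum_{v \in A_J} v = \sum_{I \subseteq J \subseteq R} (-1)^{f_J}\,\pi_J\binom{M_J}{2},
$$
where $f_J = |J \setminus I|$ is precisely the number of prime divisors of $\pi_J/\pi_I$, matching the statement.

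The only genuinely nontrivial point — and hence the main obstacle to state cleanly — is justifying the inclusion–exclusion at the level of the sum-of-elements functional rather than at the level of sets: one must note that for the counting measure weighted by the value of each element, additivity over disjoint unions holds, so the usual Möbius/inclusion–exclusion identity $\mathbf{1}_{A_I \setminus \bigcup_j A_{I\cup\{j\}}} = \sum_{S \subseteq R\setminus I} (-1)^{|S|}\mathbf{1}_{A_{I \cup S}}$ may be multiplied by the "value" function and summed. Everything else is the routine arithmetic-progression sum $0 + 1 + \cdots + (M_J - 1) = \binom{M_J}{2}$ and the observation that intersections of the $A_J$ are again of the same form because distinct primes have coprime contributions. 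I would also remark in passing that the case $I = R$ (where $\pi_R = $ the radical of $m$ and $0 \in C_R$) is consistent: the formula then sums over a single term or collapses appropriately, and one should check $\pi_I \ne 0$ is not actually needed since $M_J \ge 1$ throughout, but this is a one-line aside rather than a separate argument.
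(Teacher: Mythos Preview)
Your proof is correct and follows essentially the same inclusion--exclusion approach as the paper: start from the sum of all multiples of $\pi_I$ in $\Zm$, then subtract and re-add according to which additional primes $p_j$ with $j\in R\setminus I$ divide the element. The paper's proof is a two-sentence sketch of exactly this argument, whereas you have spelled out the intersection identity $A_J\cap A_{J'}=A_{J\cup J'}$ and the additivity of the sum-of-elements functional that make the inclusion--exclusion go through.
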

\begin{proof}
	This follows from Inclusion-Exclusion. Recall that elements in $C_I$ are of the form $\pi_I x$ where $\text{gcd}(x,\frac{m}{g_I}) =1$. We first consider the sum of all multiples of $\pi_I$ given by $\pi_I(1 + 2 + \cdots + (M_I-1))$. We then subtract all elements that are divisible by one additional prime dividing $m$. We add back all elements divisible by two additional primes dividing $m$ and so on.
\end{proof}

\begin{remark}
	Note that when $m$ is squarefree, $\pi_R = m$ implies that $M_R = 1$.  Therefore $\binom{M_R}{2} =0$. 
\end{remark}

\begin{prop} \label{reduced_sum_form}
	Fix a component $C_I$ in the sequential power graph of $\Z/ m\Z$. Let $f_{J}$ denote the number of prime divisors in $\frac{\pi_J}{\pi_I}$ when $\pi_I| \pi_J$ and let $M_J = \frac{m}{\pi_J}$. Then the sum of the elements in component $C_I$ is given by
	
	$$
	\frac{m^2}{2} \sum_{I\subseteq J \subseteq R} \frac{(-1)^{f_J} }{\pi_J} .
	$$
	
\end{prop}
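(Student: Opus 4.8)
The plan is to start from the Inclusion–Exclusion formula of the preceding lemma, namely that the sum of the elements of $C_I$ equals $\sum_{I \subseteq J \subseteq R} (-1)^{f_J} \pi_J \binom{M_J}{2}$, and to simplify each summand using the relation $\pi_J M_J = m$. Expanding the binomial coefficient gives $\pi_J \binom{M_J}{2} = \pi_J \cdot \tfrac{M_J(M_J-1)}{2} = \tfrac{\pi_J M_J^2 - \pi_J M_J}{2}$, and since $\pi_J M_J^2 = M_J(\pi_J M_J) = M_J m = \tfrac{m^2}{\pi_J}$ and $\pi_J M_J = m$, this equals $\tfrac12\bigl(\tfrac{m^2}{\pi_J} - m\bigr)$. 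Substituting this back into the lemma's formula and splitting the sum yields
$$\sum_{I \subseteq J \subseteq R} (-1)^{f_J} \pi_J \binom{M_J}{2} = \frac{m^2}{2} \sum_{I \subseteq J \subseteq R} \frac{(-1)^{f_J}}{\pi_J} \;-\; \frac{m}{2} \sum_{I \subseteq J \subseteq R} (-1)^{f_J}.$$

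The next step is to show the second sum vanishes. Since $I \subseteq J \subseteq R$, the quantity $f_J$ (the number of prime divisors of $\pi_J/\pi_I$) is $|J \setminus I| = |J| - |I|$. Writing $k = r - |I| = |R \setminus I|$ and grouping the subsets $J$ with $I \subseteq J \subseteq R$ according to $t = |J \setminus I|$ (there being $\binom{k}{t}$ of them), we get $\sum_{I \subseteq J \subseteq R} (-1)^{f_J} = \sum_{t=0}^{k} \binom{k}{t}(-1)^t = (1-1)^k = 0$ as long as $k \geq 1$, i.e. as long as $I \subsetneq R$. Feeding this back into the displayed identity leaves exactly $\frac{m^2}{2} \sum_{I \subseteq J \subseteq R} \frac{(-1)^{f_J}}{\pi_J}$, which is the claimed closed form.

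There is really only one point needing care, and it is not so much an obstacle as a bookkeeping caveat: when $I = R$ the cancellation above fails ($k = 0$, so $\sum(-1)^{f_J} = 1$ rather than $0$), and the correction term contributes an extra $-\tfrac m2$; indeed the nilpotent component $C_R$ has element sum $\pi_R\binom{M_R}{2} = \tfrac{m^2}{2\pi_R} - \tfrac m2$. Since $C_R$ is handled separately earlier in the paper, the proposition is to be read for the proper components $I \subsetneq R$, where the binomial-theorem cancellation of the linear-in-$M_J$ tail of each $\binom{M_J}{2}$ is the entire content of the argument; no further combinatorial or number-theoretic difficulty arises.
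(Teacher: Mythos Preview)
Your proof is essentially the paper's: expand $\pi_J\binom{M_J}{2}$ via $\pi_J M_J = m$, split into $\frac{m^2}{2}\sum\frac{(-1)^{f_J}}{\pi_J} - \frac{m}{2}\sum(-1)^{f_J}$, and annihilate the second sum by the binomial identity. Your flagging of the $I=R$ exception is in fact sharper than the paper, which asserts the alternating binomial sum vanishes without noting that this fails when $|R\setminus I|=0$ (the paper does not actually treat the sum over $C_R$ separately in this section, so as you observe the stated formula is off by $-\tfrac{m}{2}$ there).
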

\begin{proof}
	We first plug in the formula for $M_J$ to obtain
	\begin{align*}
	\sum_{I\subseteq J \subseteq R}(-1)^{f_J} \pi_J \binom{M_J}{2} &= \sum_{I\subseteq J \subseteq R} (-1)^{f_J} \pi_J \cdot\frac{1}{2} \cdot \frac{m}{\pi_J} \left( \frac{m}{\pi_J}  - 1\right)\\
	&= \sum_{I\subseteq J \subseteq R} (-1)^{f_J} \pi_J \cdot\frac{1}{2} \cdot \left(\frac{m^2}{\pi_J^2} - \frac{m}{\pi_J} \right)\\
	&= \sum_{I\subseteq J \subseteq R}(-1)^{f_J} \pi_J \cdot\frac{1}{2} \cdot \frac{m^2}{\pi_J^2} - \sum_{I\subseteq J \subseteq R}(-1)^{f_J} \pi_J \cdot \frac{1}{2} \cdot \frac{m}{\pi_J} \\
	&= \frac{m^2}{2} \sum_{I\subseteq J \subseteq R} \frac{(-1)^{f_J} }{\pi_J} -\frac{m}{2} \sum_{I\subseteq J \subseteq R} (-1)^{f_J}
	\end{align*}
	Let $\#I = k$ and $\# R = r$. Then there are $\binom{r-k}{i}$ ways to pick $J$ such that $I \subset J$ and $\#(J \setminus I) = i$. Therefore we see that
	$$
	\sum_{I\subseteq J \subseteq R}(-1)^{f_J} = \sum_{i=0}^{r-k} (-1)^{i} \binom{r-k}{i} =0
	$$
	since the alternating sum of binomial coefficients is equal to 0. The formula follows.
	
\end{proof}

The following lemma by T\'oth in \cite{toth} allows us to reduce the formula from Proposition \ref{reduced_sum_form} further.
\begin{lemma} \cite{toth}\label{alt_sum_divs}
	Let $\beta(n)$ denote the alternating sum of divisors of a squarefree integer $n$. That is,
	$$
	\beta(n) = \sum_{d | n} d \lambda\left({\frac{n}{d}}\right)
	$$
	where $\lambda(k) = (-1)^{\Omega(k)}$ is the Liouville function and $\Omega(k)$ denotes the number of prime divisors of $k$.  Then
	$$
	\beta(n) = \phi(n).
	$$
\end{lemma}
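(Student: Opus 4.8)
The plan is to prove the multiplicative identity $\beta(n) = \phi(n)$ for squarefree $n$ by exploiting the multiplicativity of $\beta$ and checking the identity on prime values. First I would observe that $\beta = \mathrm{id} * \lambda$ is the Dirichlet convolution of the identity function $d \mapsto d$ with the Liouville function $\lambda$, both of which are multiplicative; hence $\beta$ is multiplicative, and so is $\phi$. Therefore it suffices to verify $\beta(p) = \phi(p)$ for every prime $p$ (restricting attention to squarefree arguments, so prime powers beyond the first never arise). This is an immediate computation: the divisors of $p$ are $1$ and $p$, so
\begin{equation*}
\beta(p) = p \cdot \lambda(1) + 1 \cdot \lambda(p) = p - 1 = \phi(p).
\end{equation*}

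Next I would assemble the general case: write $n = q_1 q_2 \cdots q_t$ with distinct primes $q_i$ (this is where squarefreeness is used), and then by multiplicativity
\begin{equation*}
\beta(n) = \prod_{i=1}^{t} \beta(q_i) = \prod_{i=1}^{t} (q_i - 1) = \prod_{i=1}^{t} \phi(q_i) = \phi(n).
\end{equation*}
A small point to make explicit is that $\beta$ is genuinely a function on squarefree integers only in the statement, but the convolution $\mathrm{id} * \lambda$ is defined on all of $\N$; one should note that the sum $\sum_{d\mid n} d\,\lambda(n/d)$ makes sense verbatim for squarefree $n$ and agrees with the convolution, so the multiplicativity argument transfers without friction. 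Alternatively, one can avoid invoking general Dirichlet-convolution machinery and argue directly: for squarefree $n = n_1 n_2$ with $\gcd(n_1,n_2)=1$, every divisor $d\mid n$ factors uniquely as $d = d_1 d_2$ with $d_i \mid n_i$, and $\lambda(n/d) = \lambda(n_1/d_1)\lambda(n_2/d_2)$, so the double sum factors as $\beta(n_1)\beta(n_2)$.

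I do not anticipate a serious obstacle here; the only thing requiring care is the bookkeeping that makes ``$\beta$ is multiplicative'' rigorous, i.e.\ confirming that $\lambda$ is completely multiplicative (hence multiplicative) and that the identity function is multiplicative, so that their convolution is multiplicative. If one prefers to stay self-contained, the direct factorization argument in the previous paragraph is entirely elementary and avoids citing the convolution theorem. Either way the proof is short, and the base case $\beta(p) = p - 1$ is the whole arithmetic content.
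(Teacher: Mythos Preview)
Your proof is correct. The paper does not actually supply its own proof of this lemma: it is stated with a citation to T\'oth and used as a black box, so there is no argument in the paper to compare against. Your approach---recognizing $\beta = \mathrm{id} * \lambda$ as a Dirichlet convolution of multiplicative functions, then verifying $\beta(p) = p - 1 = \phi(p)$ on primes and assembling via multiplicativity for squarefree $n$---is the standard and cleanest route, and your alternative direct factorization argument is equally valid and fully self-contained.
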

\begin{remark}
	This is not an equality when $n$ is not squarefree, though it does prove a lower bound on $\beta(n)$. 
\end{remark}
\begin{prop} \label{alt_sum_mults}
	$\sum_{I\subseteq J \subseteq R} \frac{(-1)^{f_J}}{\pi_J} = \frac{\phi(\pi_{R\setminus I})}{\pi_R}.$
\end{prop}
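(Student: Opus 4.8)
The plan is to reindex the sum over the sublattice $\{J : I\subseteq J\subseteq R\}$ by its complement inside $R\setminus I$, turn the result into an alternating sum over the divisors of the squarefree integer $\pi_{R\setminus I}$, and then apply T\'oth's Lemma~\ref{alt_sum_divs}.

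First I would observe that every $J$ with $I\subseteq J\subseteq R$ can be written uniquely as $J = I\cup K$ with $K\subseteq R\setminus I$, and that for such $J$ we have $\pi_J = \pi_I\,\pi_K$ and $f_J = |J\setminus I| = |K|$ (recall $f_J$ is the number of prime divisors of $\pi_J/\pi_I$, and $\pi_J/\pi_I = \pi_K$ is squarefree). Hence
$$
\sum_{I\subseteq J\subseteq R}\frac{(-1)^{f_J}}{\pi_J}
= \frac{1}{\pi_I}\sum_{K\subseteq R\setminus I}\frac{(-1)^{|K|}}{\pi_K}.
$$
Next, writing $n = \pi_{R\setminus I}$ (which is squarefree), the map $K\mapsto \pi_K$ is a bijection from subsets of $R\setminus I$ onto the divisors of $n$, and under it $(-1)^{|K|} = (-1)^{\Omega(\pi_K)} = \lambda(\pi_K)$. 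Therefore the inner sum is $\sum_{d\mid n}\lambda(d)/d$, and substituting $d\mapsto n/d$ rewrites this as $\frac1n\sum_{d\mid n} d\,\lambda(n/d) = \beta(n)/n$. By Lemma~\ref{alt_sum_divs}, $\beta(n) = \phi(n)$ since $n$ is squarefree, so the inner sum equals $\phi(\pi_{R\setminus I})/\pi_{R\setminus I}$.

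Finally I would combine the two pieces and use $\pi_I\,\pi_{R\setminus I} = \pi_R$ (the multipliers of a set and its complement have disjoint prime supports whose union is $\{p_1,\dots,p_r\}$) to conclude
$$
\sum_{I\subseteq J\subseteq R}\frac{(-1)^{f_J}}{\pi_J}
= \frac{1}{\pi_I}\cdot\frac{\phi(\pi_{R\setminus I})}{\pi_{R\setminus I}}
= \frac{\phi(\pi_{R\setminus I})}{\pi_R}.
$$
There is no serious obstacle here; the only points requiring a moment of care are the bookkeeping that $f_J = |K|$ and the fact that the bijection $K\leftrightarrow\pi_K$ between subsets and divisors is clean precisely because $\pi_{R\setminus I}$ is squarefree, which is exactly the hypothesis under which T\'oth's identity $\beta(n)=\phi(n)$ holds. (One could equally avoid Lemma~\ref{alt_sum_divs} and just evaluate $\sum_{K\subseteq R\setminus I}(-1)^{|K|}/\pi_K = \prod_{j\in R\setminus I}\bigl(1-\tfrac1{p_j}\bigr)$ directly by expanding the product, but routing through the stated lemma keeps the paper self-contained.)
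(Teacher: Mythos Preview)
Your proof is correct and follows essentially the same route as the paper: both factor out $1/\pi_I$, reindex the remaining sum over divisors of the squarefree integer $\pi_{R\setminus I}$, and invoke Lemma~\ref{alt_sum_divs} to identify that alternating divisor sum as $\phi(\pi_{R\setminus I})$. Your bijection $J\leftrightarrow K=J\setminus I$ makes the bookkeeping slightly cleaner than the paper's expansion, and your parenthetical remark that $\sum_{K\subseteq R\setminus I}(-1)^{|K|}/\pi_K=\prod_{j\in R\setminus I}(1-1/p_j)$ gives a direct alternative, but the substance is the same.
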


\begin{proof}
	Consider
	$$
	\sum_{I\subseteq J \subseteq R} \frac{(-1)^{f_J}}{\pi_J}  = \frac{1}{\pi_I} + \sum_{i=1}^{r-k} (-1)^i \sum_{I\subseteq J \subseteq R} \frac{1}{\pi_J}
	$$
	just by expanding out the first summation. Now, we can get a common denominator by taking
	\begin{align*} \frac{1}{\pi_I} + \sum_{i=1}^{r-k} (-1)^i \sum_{I\subseteq J \subseteq R} \frac{1}{\pi_J} &=	\frac{1}{\pi_I} \left(1+  \sum_{i=1}^{r-k} \frac{(-1)^i}{\pi_{R \setminus I}} \sum_{\substack{J\subseteq R \setminus I\\ |J| = r-k-i}} \pi_J\right)\\
	&= 	\frac{1}{\pi_I} \left( \frac{1}{\pi_{R \setminus I}} \sum_{i=0}^{r-k} (-1)^i\sum_{\substack{J\subseteq R \setminus I\\ |J| = r-k-i}} \pi_J\right)\\
	\end{align*}
	Observe that this sum is simply the alternating sum of the divisors of $\pi_{R\setminus I}$, call it $\beta(\pi_{R\setminus I})$. By the Lemma \ref{alt_sum_divs}, $\beta(\pi_{R\setminus I}) = \phi(\pi_{R \setminus I})$. Therefore we have
	$$
	\sum_{I\subseteq J \subseteq R}\frac{(-1)^{f_J}}{\pi_J}  = \frac{1}{\pi_I} \cdot \frac{\phi(\pi_{R\setminus I})}{\pi_{R\setminus I}} = \frac{\phi(\pi_{R\setminus I})}{\pi_R}.
	$$
\end{proof}
By putting Proposition \ref{reduced_sum_form} and Proposition \ref{alt_sum_mults} together, we arrive at the following theorem.
\begin{theorem}\label{closed_sum}
	Fix a component $C_I$ in the sequential power graph of $\Z/ m\Z$. The sum of the elements in component $C_I$ is given by
	$$
	\frac{m^2}{2} \cdot \frac{\phi(\pi_{R\setminus I})}{\pi_R}.
	$$
\end{theorem}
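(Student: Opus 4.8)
The plan is to combine the two preceding results directly, since Theorem~\ref{closed_sum} is essentially their conjunction. First I would invoke Proposition~\ref{reduced_sum_form}, which states that the sum of the elements in $C_I$ (taken with representatives in $[0,m)$ and added as ordinary integers) equals
$$
\frac{m^2}{2} \sum_{I\subseteq J \subseteq R} \frac{(-1)^{f_J}}{\pi_J}.
$$
Then I would apply Proposition~\ref{alt_sum_mults}, which evaluates that very sum as $\frac{\phi(\pi_{R\setminus I})}{\pi_R}$. Substituting gives
$$
\frac{m^2}{2}\cdot\frac{\phi(\pi_{R\setminus I})}{\pi_R},
$$
which is exactly the claimed closed form. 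So the body of the proof is a one-line substitution.

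Since most of the real content lives in the two cited propositions, the only thing worth doing here is to make sure their hypotheses match: both are stated for an arbitrary fixed component $C_I$ in the sequential power graph of $\Zm$, with the same notation $f_J$ (the number of prime divisors of $\pi_J/\pi_I$ when $\pi_I \mid \pi_J$) and $M_J = \frac{m}{\pi_J}$, so there is nothing to reconcile. I would also note, for the reader, where the genuine work happened: Proposition~\ref{reduced_sum_form} reduced the inclusion-exclusion sum from Lemma~\ref{alt_sum_divs}'s predecessor by killing the term $\frac{m}{2}\sum_{I\subseteq J\subseteq R}(-1)^{f_J}$ using the vanishing of an alternating binomial sum, and Proposition~\ref{alt_sum_mults} re-packaged the remaining sum as an alternating sum of divisors of $\pi_{R\setminus I}$ and applied T\'oth's identity $\beta(n) = \phi(n)$ for squarefree $n$ (Lemma~\ref{alt_sum_divs}), which is legitimate precisely because every multiplier $\pi_{R\setminus I}$ is squarefree by construction.

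The main ``obstacle'' is really just a bookkeeping one: confirming that $\pi_{R\setminus I}$ is squarefree so that Lemma~\ref{alt_sum_divs} applies with equality rather than merely as an inequality (the Remark following that lemma flags exactly this caveat). This is immediate from the definition of a multiplier as a product of distinct primes $p_{j_1}\cdots p_{j_{r-s}}$, so no case analysis on whether $m$ is squarefree is needed. I would therefore present the proof as follows.

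\begin{proof}
	By Proposition~\ref{reduced_sum_form}, the sum of the elements in $C_I$ equals
	$$
	\frac{m^2}{2} \sum_{I\subseteq J \subseteq R} \frac{(-1)^{f_J}}{\pi_J}.
	$$
	By Proposition~\ref{alt_sum_mults}, $\sum_{I\subseteq J \subseteq R} \frac{(-1)^{f_J}}{\pi_J} = \frac{\phi(\pi_{R\setminus I})}{\pi_R}$; note that this application is valid because $\pi_{R\setminus I}$, being a product of distinct primes, is squarefree, so that the use of Lemma~\ref{alt_sum_divs} inside the proof of Proposition~\ref{alt_sum_mults} is an equality. Substituting, the sum of the elements in $C_I$ equals
	$$
	\frac{m^2}{2}\cdot\frac{\phi(\pi_{R\setminus I})}{\pi_R},
	$$
	as claimed.
\end{proof}
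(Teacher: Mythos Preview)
Your proposal is correct and matches the paper's own argument exactly: the paper simply states that Theorem~\ref{closed_sum} follows ``by putting Proposition~\ref{reduced_sum_form} and Proposition~\ref{alt_sum_mults} together,'' which is precisely the substitution you carry out. Your added remark about $\pi_{R\setminus I}$ being squarefree is a helpful sanity check but not strictly needed, since Proposition~\ref{alt_sum_mults} already incorporates that observation internally.
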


\begin{corollary}
	The sum of the tail elements in $C_I$ is given by
	$$
	\frac{m^2}{2} \cdot \frac{\phi(\pi_{R\setminus I})}{\pi_R} - \frac{m}{2} \cdot \phi(g_{R \setminus I}).
	$$
\end{corollary}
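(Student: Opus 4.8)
The plan is to compute the sum of the tail elements of $C_I$ as the difference between the sum of all elements of $C_I$ and the sum of the non-tail elements of $C_I$. By Theorem \ref{closed_sum}, the sum of all elements of $C_I$ equals $\frac{m^2}{2} \cdot \frac{\phi(\pi_{R\setminus I})}{\pi_R}$, so it remains only to show that the sum of the non-tail elements equals $\frac{m}{2} \cdot \phi(g_{R\setminus I})$.

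For the non-tail part, recall that the set of non-tail elements of $C_I$ is exactly $d_IU$, and that by an earlier proposition $d_IU = \{g_Ix \bmod m \, : \, \gcd(x, \tfrac{m}{g_I}) = 1\}$, where the representatives $g_Ix$ are taken in $[0,m)$. The map sending a unit $x$ modulo $\frac{m}{g_I}$ to the residue $g_Ix \bmod m$ is a bijection from $(\Z/\tfrac{m}{g_I}\Z)^\times$ onto $d_IU$, and crucially it is compatible with the $x \mapsto \tfrac{m}{g_I} - x$ pairing: if $g_Ix$ is the representative in $[0,m)$ of a non-tail, then $g_I\big(\tfrac{m}{g_I}-x\big) = m - g_Ix$ is the representative of another non-tail, and these pair up. So I would follow exactly the argument of the proposition computing $\sum_{u\in U} u$: the elements of $d_IU$ come in pairs $(v, m-v)$ each summing to $m$, there are $\tfrac{1}{2}|d_IU| = \tfrac{1}{2}\phi(\tfrac{m}{g_I})$ such pairs (using Proposition \ref{size_dU}), hence $\sum_{v \in d_IU} v = \frac{m}{2}\phi(\tfrac{m}{g_I})$. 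Finally, by the notation of Section \ref{notation}, $\tfrac{m}{g_I} = g_{R\setminus I}$, so $\sum_{v\in d_IU} v = \frac{m}{2}\phi(g_{R\setminus I})$.

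Subtracting gives
$$
\sum_{v \text{ a tail in } C_I} v = \frac{m^2}{2}\cdot\frac{\phi(\pi_{R\setminus I})}{\pi_R} - \frac{m}{2}\cdot\phi(g_{R\setminus I}),
$$
as claimed. One should also check the degenerate cases: when $g_I$ is squarefree, $C_I$ has no tails, $g_I = \pi_I$, and indeed $d_IU = C_I$, so the subtracted term should equal the full sum — this is consistent, and in fact one could note that Theorem \ref{closed_sum} in that case already reduces to $\frac{m}{2}\phi(g_{R\setminus I})$ via $\pi_R = m g_{R\setminus I}/(\text{something})$; and when $I = R$ one has $\pi_R \mid m$ need not divide evenly, but then the tails are the nonzero nilpotents and $d_RU = \{0\}$ contributes $0$, again consistent with plugging in. The main obstacle, such as it is, is the bookkeeping needed to confirm that the $v \mapsto m-v$ involution genuinely preserves $d_IU$ as a set of residues in $[0,m)$ and has no fixed point (which requires $\tfrac{m}{g_I} > 2$, handled exactly as the $m>2$ caveat in the units proposition, with the small remaining cases checked by hand); once that is in place the identity is immediate from Theorem \ref{closed_sum} and Proposition \ref{size_dU}.
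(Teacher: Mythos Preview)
Your argument is correct and follows exactly the paper's route: subtract the sum over $d_IU$ from the sum over $C_I$ given by Theorem~\ref{closed_sum}. The only cosmetic difference is that you re-derive $\sum_{v\in d_IU} v = \tfrac{m}{2}\phi(m/g_I)=\tfrac{m}{2}\phi(g_{R\setminus I})$ via a direct pairing instead of simply citing Proposition~\ref{dU_sum}, which (after using $\phi(m)=\phi(g_I)\phi(g_{R\setminus I})$) gives the same value.
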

\begin{proof}
	Apply Proposition \ref{dU_sum} and Proposition \ref{closed_sum}.
\end{proof}

\section{Homomorphisms between connected components}

\label{homomorphism}
We begin by giving a homomorphism from $U$ to $dU$. We note that Theorem \ref{homomorphism} generalizes the following proposition, but we present this here to show its simplicity. The homomorphism in Proposition \ref{group} was noted abstractly in \cite{schwarz}, but not explicitly given. We later extend this homomorphism to $d_I U \rightarrow d_K U$ whenever the components $C_I$ and $C_K$ are comparable.
\begin{prop} \cite[Theorem 2.3]{schwarz} \label{group}
	$f:U \rightarrow dU$ is a group homomorphism, where $f(u)=du$.
\end{prop}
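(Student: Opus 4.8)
The plan is to verify directly from the definitions that $f(u) = du$ is a group homomorphism from $U = (\Zm)^\times$ to $dU$. Both sets are already known to be groups: $U$ is the unit component, and $dU$ is a multiplicative group with identity $d$ by Proposition \ref{dU_mult_group}. So the only thing left to check is that $f$ respects the group operation and lands in the right set.

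First I would observe that $f$ is well-defined with codomain $dU$: for any $u \in U$, the element $f(u) = du$ is by definition an element of the set $dU$. Next, I would check the homomorphism property. Take $u, v \in U$; then
\[
f(u)f(v) = (du)(dv) = d^2 uv \equiv d\,uv \pmod{m} = f(uv),
\]
using that $d$ is idempotent, i.e.\ $d^2 \equiv d \pmod m$. This is exactly the closure computation already appearing in the proof of Proposition \ref{dU_mult_group}, so no new ideas are needed. For completeness one can note $f(1) = d\cdot 1 = d$, which is the identity of $dU$, so $f$ is a monoid/group homomorphism in the strict sense.

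There is essentially no obstacle here — the statement is a one-line consequence of idempotency of $d$, and the surrounding propositions have already established that $dU$ is a group. The only thing worth remarking is that $f$ need not be injective: its kernel is $\{u \in U : du \equiv d \pmod m\}$, which by the counting in Proposition \ref{size_dU} has size $\phi(m)/\phi(\tfrac{m}{g})$ where $g = \gcd(d,m)$, recovering the fact (used later in Proposition \ref{dU_sum}) that $f$ is a $\phi(g)$-to-$1$ surjection onto $dU$. I would close by noting that this is the $K = \emptyset$ (equivalently, trivial-multiplier) special case of the more general homomorphism $d_I U \to d_K U$ developed in Section \ref{homomorphism}.
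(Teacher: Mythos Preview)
Your proof is correct and takes the same approach as the paper: both simply verify $f(u)f(v) = (du)(dv) = d^2 uv \equiv d(uv) = f(uv)$ using idempotency of $d$. One small slip in your closing remark: this is the $I = \emptyset$ special case of $H: d_I U \to d_K U$ (so that $d_I = 1$ and $d_I U = U$), not the $K = \emptyset$ case.
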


\begin{proof}
	Let $du, dv \in dU$. Then $$(du)(dv) = d^2 uv \equiv duv \equiv d (uv) \pmod{m}.$$ Since $uv \in U$ as $U$ is a group, then $duv \in dU$. 
\end{proof}

\begin{prop}
	$dU$ is isomorphic to a subgroup of $U$.
\end{prop}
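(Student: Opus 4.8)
The plan is to read both groups off their Chinese Remainder decompositions and then exhibit an explicit injection. By CRT we have $U = (\Zm)^\times \cong \prod_{i \in R} (\Z/p_i^{e_i}\Z)^\times$, and by Theorem \ref{dU_struc}, writing $d = d_I$ and $J = R \setminus I$, we have $d_I U \cong \prod_{j \in J} (\Z/p_j^{e_j}\Z)^\times$. Since $J \subseteq R$, the factors appearing on the right-hand side of the second isomorphism are exactly a sub-collection of those appearing in the first, so it suffices to observe that a partial direct product embeds into the full direct product.

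Concretely, I would define $\iota \colon \prod_{j \in J} (\Z/p_j^{e_j}\Z)^\times \to \prod_{i \in R} (\Z/p_i^{e_i}\Z)^\times$ by sending a tuple $(x_j)_{j \in J}$ to the tuple $(y_i)_{i \in R}$ with $y_i = x_i$ for $i \in J$ and $y_i = 1$ for $i \in I$. Because multiplication in a direct product is coordinatewise and the ``padding'' coordinates are all the identity, $\iota$ is a group homomorphism; it is injective since any element of $\ker \iota$ has all of its $J$-coordinates equal to $1$. Composing $\iota$ with the two isomorphisms above realizes $d_I U$ as (the image of an injective homomorphism into, hence a subgroup of) $U$, which is the claim.

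Alternatively — and more in the spirit of the immediately preceding Proposition \ref{group} — one can argue that $f \colon U \to d_I U$, $f(u) = d_I u$, is a surjective group homomorphism (surjective because every element of $d_I U$ is of the form $d_I u$), so $d_I U \cong U/\ker f$, and then invoke the fact that every quotient of a finite abelian group is isomorphic to a subgroup of it (immediate from the structure theorem for finite abelian groups). I expect no genuine obstacle in either route; the only points that warrant a word of care are that we keep entire local factors $(\Z/p_i^{e_i}\Z)^\times$ intact, so no special behavior at $p = 2$ (where $(\Z/2^e\Z)^\times$ splits further for $e \ge 3$) ever intervenes, and that the padding map $\iota$ really is a homomorphism — which it is, being the canonical inclusion of a sub-product of a direct product.
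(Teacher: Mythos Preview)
Your proposal is correct, and your second route is exactly the paper's proof: the paper observes that $U$ is a direct product of finite cyclic groups and that any homomorphic image of such a group is isomorphic to a subgroup of it --- i.e., it invokes precisely the ``quotient of a finite abelian group embeds back'' fact you name, with $f$ from Proposition~\ref{group} supplying the surjection.

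Your first route is a genuinely more explicit alternative: rather than appealing to the general structure theorem for finite abelian groups, you use the specific CRT/Theorem~\ref{dU_struc} decompositions of $U$ and $d_IU$ and write down the canonical inclusion of the sub-product. This buys you a concrete embedding (and makes clear that the image is the subgroup of units congruent to $1$ modulo each $p_i^{e_i}$ with $i\in I$), at the cost of invoking Theorem~\ref{dU_struc}. The paper's one-line argument is shorter and works for any finite abelian $U$, but is non-constructive. Both are fine; your remark that the $p=2$ subtlety never intervenes because entire local factors are preserved is correct and worth keeping.
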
		
\begin{proof}
	$U$ is the direct product of finite cyclic groups. As such, any homomorphic image of $U$ is isomorphic to a subgroup of $U$. 
\end{proof}

\begin{theorem} \label{lattice}
	The set of connected components $\{C_I\}$ forms a complete lattice where $C_{I} \leq C_{J}$ if and only if $\pi_I | \pi_J$. In other words, $C_I \leq C_J$ if and only if $I \subseteq J$. 
\end{theorem}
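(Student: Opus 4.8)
The plan is to reduce the statement to the elementary fact that the Boolean lattice $(2^R,\subseteq)$ is a complete lattice, transported across the bijection between components and subsets of $R$ that we have already established.

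First I would recall that Theorems \ref{idem_comp} and \ref{mult_comp} together give a bijection $I \mapsto C_I$ between subsets $I \subseteq R$ and connected components, with $C_I$ carrying the unique idempotent $d_I$ and the unique multiplier $\pi_I = p_{i_1}\cdots p_{i_s}$. Next I would check that the two phrasings of the order agree: since each $\pi_I$ is a squarefree product of a subset of the distinct primes $p_1,\dots,p_r$, unique factorization gives $\pi_I \mid \pi_J \iff I \subseteq J$. Hence the relation $\leq$ defined by $C_I \leq C_J \iff \pi_I \mid \pi_J$ is exactly the one pulled back from inclusion on $2^R$, so $(\{C_I\},\leq)$ is order-isomorphic to $(2^R,\subseteq)$ via $I \mapsto C_I$.

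Finally I would observe that $(2^R,\subseteq)$ is a complete lattice: for any family $\{I_\alpha\}$ of subsets of $R$, the intersection $\bigcap_\alpha I_\alpha$ is its greatest lower bound and the union $\bigcup_\alpha I_\alpha$ its least upper bound, with top element $R$ and bottom element $\emptyset$; equivalently, since $R$ is finite, $2^R$ is a finite lattice and every finite lattice is complete. Transporting along the order isomorphism, the meet of a family $\{C_{I_\alpha}\}$ is $C_{\bigcap_\alpha I_\alpha}$ and its join is $C_{\bigcup_\alpha I_\alpha}$, the greatest element is $C_R$ (the nilpotent component containing $0$), and the least element is $C_\emptyset = U$. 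As an optional concrete description I would note, using the bijective relation between $\pi_I$ and $d_I$ recorded after Section \ref{unique_dU}, that $d_{I\cap J}$ is the idempotent attached to $\gcd(\pi_I,\pi_J)$ and $d_{I\cup J}$ the one attached to $\mathrm{lcm}(\pi_I,\pi_J)$, so the lattice operations are computed directly from the multipliers.

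There is no substantial obstacle here; the only points needing a sentence of care are the equivalence $\pi_I \mid \pi_J \iff I \subseteq J$ (immediate from squarefreeness and unique factorization) and the remark that completeness is automatic for a finite lattice, so one need not separately exhibit infinite meets and joins.
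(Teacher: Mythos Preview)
Your proposal is correct and follows essentially the same idea as the paper: both arguments rest on the squarefreeness of the multipliers, so that $\pi_I \mid \pi_J \iff I\subseteq J$, and both realize meet and join via $\gcd$ and $\mathrm{lcm}$ (equivalently, intersection and union of index sets). Your version is simply a bit more explicit in framing this as an order isomorphism with the Boolean lattice $(2^R,\subseteq)$ and in noting that completeness is automatic for a finite lattice.
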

\begin{proof}
	Let $\pi_I$ and $\pi_J$ be multipliers of components $C_{I}$ and $C_{J}$ respectively. We define $C_{I} \vee C_{J} = \text{lcm}(\pi_I, \pi_J)$ and $C_{I} \wedge C_J = \text{gcd}(\pi_I, \pi_J)$. Since the multipliers are squarefree, then the supremum and infimum are defined for all pairs of components. 
\end{proof}

\begin{remark}
	This lattice is isomorphic to the lattice of idempotents of $\Zm$. This is a bounded lattice where the smallest element is $d =1$ and the largest element is $d=0$. The coatoms of the lattice are $d_{R \setminus \{i\}}$ and the atoms are $d_{\{i\}}$ for each $i = 1, \ldots, r$.
\end{remark}

\begin{theorem} \label{homomorphism}
	Let $C_I$ and $C_K$ be connected components in $\Zm$. Let $C_{I} \leq C_{K}$ (so $\pi_I | \pi_K$ and $I \subseteq K$).
	Note that $\pi_{K \setminus I} = \frac{\pi_K}{\pi_I}$.	
	Then $H:d_IU \rightarrow d_KU$ is a group homomorphism under multiplication, where $H(d_Iu)=d_{K \setminus I}d_Iu=d_Ku$.
\end{theorem}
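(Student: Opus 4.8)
The plan is to reduce the statement to the Chinese Remainder Theorem description of idempotents furnished by Theorem \ref{idem_struc}. First I would record the key fact that, under the isomorphism $\Zm \cong \prod_{i=1}^{r} \Z/p_i^{e_i}\Z$, the idempotent $d_I = a_I g_I$ is precisely the element whose $i$-th coordinate is $0$ for $i \in I$ and $1$ for $i \notin I$: indeed $g_I \equiv 0 \pmod{p_i^{e_i}}$ when $i \in I$, while $a_I g_I \equiv 1 \pmod{m/g_I}$ forces $d_I \equiv 1 \pmod{p_j^{e_j}}$ when $j \notin I$. Multiplying these coordinate vectors entrywise gives $d_I d_J = d_{I \cup J}$ for all $I, J \subseteq R$; in particular, since $I \subseteq K$ and hence $(K \setminus I) \cup I = K$, we get $d_{K \setminus I} d_I = d_K$. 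This already establishes that the two expressions in the statement agree, that $H$ takes values in $d_K U$, and that $H$ is well defined on $d_I U$ — it is simply multiplication by the fixed element $d_{K \setminus I}$, so the output depends only on $d_I u \in \Zm$ and not on the chosen representative $u$.

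With well-definedness in hand, the homomorphism property is a one-line idempotent computation: for $u, v \in U$,
$$
H\big((d_I u)(d_I v)\big) = H(d_I^2\, uv) = H(d_I\, uv) = d_K\, uv = d_K^2\, uv = (d_K u)(d_K v) = H(d_I u)\,H(d_I v),
$$
using $d_I^2 = d_I$ and $d_K^2 = d_K$. Setting $u = 1$ shows $H(d_I) = d_K$, so $H$ sends the identity of the group $d_I U$ (which is $d_I$, by Proposition \ref{dU_mult_group}) to the identity $d_K$ of $d_K U$, as required, and hence $H$ is a group homomorphism.

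Finally, to connect with the earlier remark that the inverse of $H$ is transparent, I would invoke Theorem \ref{dU_struc}: $d_I U \cong \prod_{j \notin I} (\Z/p_j^{e_j}\Z)^\times$ and $d_K U \cong \prod_{j \notin K}(\Z/p_j^{e_j}\Z)^\times$, and since $R \setminus K \subseteq R \setminus I$ the map $H$ is just the coordinate projection forgetting the factors indexed by $K \setminus I$. Thus $H$ is surjective with kernel corresponding to $\prod_{j \in K \setminus I}(\Z/p_j^{e_j}\Z)^\times$, and a natural one-sided inverse is $d_K u \mapsto d_K u$ regarded inside $d_I U$ (i.e. lifting by $1$ in the forgotten coordinates). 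I do not anticipate a real obstacle; the only point requiring care is to prove the coordinatewise description of $d_I$ first, so that the identity $d_{K \setminus I} d_I = d_K$ is genuinely derived from the idempotent classification rather than taken for granted.
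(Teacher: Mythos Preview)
Your proof is correct and follows essentially the same route as the paper: both arguments hinge on establishing the identity $d_{K\setminus I}\,d_I = d_K$ and then verifying the homomorphism property via idempotency. The paper proves the key identity by a direct divisibility check (observing which prime powers $p_i^{e_i}$ divide the product), whereas you derive the more general formula $d_I d_J = d_{I\cup J}$ from the CRT coordinate description of idempotents; you also make well-definedness explicit by noting that $H$ is multiplication by the fixed element $d_{K\setminus I}$, a point the paper leaves implicit. Your final paragraph on surjectivity and the kernel is not needed for this theorem but correctly anticipates the two results that follow it in the paper.
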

\begin{proof}
	First consider $H((d_Iu)\cdot(d_Iv)) = H(d_Iuv) = d_{K \setminus I}d_Iuv=d_{K \setminus I}d_Iud_{K \setminus I}d_Iv = H(d_Iu) \cdot H(d_Iv)$, so $H$ is a homomorphism. 
	
	It is clear that $d_{K \setminus I}d_I$ is an idempotent. Further, observe that $p_i^{e_i} | d_{K \setminus I} d_I$ for all $i \in K \setminus I \cup I = K$. If $j \not \in K$, then $p_j^{e_j} $ cannot divide $d_{K \setminus I} d_I$ since $p_j^{e_j}$ does not divide $d_{K \setminus I}$ and it does not divide $d_I$. Therefore $d_{K \setminus I} d_I= d_K$. 
	So $d_{K \setminus I}d_I$ is the idempotent $d_K$ in $C_K$, and $H$ sends $C_I$ to $C_K$.
\end{proof}

\begin{theorem}
	Let $K \setminus I= \{j_1, \ldots, j_s\}$ and let $H$ be as in Theorem \ref{homomorphism}.
	The kernel of the homomorphism $H:d_IU \rightarrow d_KU$ is then isomorphic to $(\Z / p_{j_1}^{e_{j_1}} \Z)^\times \times  \cdots \times (\Z /p_{j_s}^{e_{j_s}}\Z)^\times$.
\end{theorem}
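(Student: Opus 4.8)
The plan is to identify $H$ with a reduction-of-modulus map via the Chinese Remainder Theorem and then read off its kernel. First I would record the explicit isomorphism behind Proposition~\ref{size_dU}: the map $\psi_I \colon d_IU \to (\Z/\tfrac{m}{g_I}\Z)^\times$ sending $d_Iu$ to its residue modulo $\tfrac{m}{g_I}$ is a well-defined group isomorphism. Well-definedness and injectivity are exactly the equivalences established in the proof of Proposition~\ref{size_dU} ($d_Iu \equiv d_Iv \pmod m$ iff $u \equiv v \pmod{\tfrac{m}{g_I}}$), multiplicativity is immediate from $d_I^2 = d_I$, and surjectivity follows from the cardinality count $|d_IU| = \phi(\tfrac{m}{g_I})$; moreover $d_I = a_Ig_I$ with $a_I \equiv g_I^{-1}\pmod{\tfrac{m}{g_I}}$ gives $d_I \equiv 1 \pmod{\tfrac{m}{g_I}}$, so $\psi_I(d_Iu)$ really is just $u \bmod \tfrac{m}{g_I}$. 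The same applies with $K$ in place of $I$, giving $\psi_K \colon d_KU \xrightarrow{\sim} (\Z/\tfrac{m}{g_K}\Z)^\times$. Since $I \subseteq K$ we have $g_I \mid g_K$, hence $\tfrac{m}{g_K} \mid \tfrac{m}{g_I}$, and there is a canonical reduction map $\rho \colon (\Z/\tfrac{m}{g_I}\Z)^\times \to (\Z/\tfrac{m}{g_K}\Z)^\times$.

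Next I would check that $\psi_K \circ H = \rho \circ \psi_I$. This is a one-line verification: $H(d_Iu) = d_Ku$ by Theorem~\ref{homomorphism}, and $d_K \equiv 1 \pmod{\tfrac{m}{g_K}}$ gives $\psi_K(d_Ku) = u \bmod \tfrac{m}{g_K}$; on the other side $\rho(\psi_I(d_Iu)) = \rho(u \bmod \tfrac{m}{g_I}) = u \bmod \tfrac{m}{g_K}$. So the square commutes, and therefore $\psi_I$ restricts to an isomorphism $\ker H \xrightarrow{\sim} \ker \rho = \{\, u \bmod \tfrac{m}{g_I} : u \equiv 1 \pmod{\tfrac{m}{g_K}} \,\}$.

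Finally I would compute $\ker\rho$ by the Chinese Remainder Theorem. Writing $\tfrac{m}{g_I} = \prod_{i \notin I} p_i^{e_i}$ and $\tfrac{m}{g_K} = \prod_{i \notin K} p_i^{e_i}$, and using $I \subseteq K$, the primes dividing $\tfrac{m}{g_I}$ but not $\tfrac{m}{g_K}$ are exactly those indexed by $K \setminus I = \{j_1,\dots,j_s\}$. Under the CRT isomorphism $(\Z/\tfrac{m}{g_I}\Z)^\times \cong \prod_{i \notin I}(\Z/p_i^{e_i}\Z)^\times$, the constraint $u \equiv 1 \pmod{\tfrac{m}{g_K}}$ kills the component in each factor with $i \notin K$ and imposes nothing on the factors indexed by $K \setminus I$. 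Hence $\ker H \cong \prod_{t=1}^{s}(\Z/p_{j_t}^{e_{j_t}}\Z)^\times$, which is the assertion.

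I do not anticipate a real obstacle: the content is entirely the observation that $H$, transported through $\psi_I$ and $\psi_K$, is just reduction of the modulus, so the only step needing genuine care is the commutativity check $\psi_K \circ H = \rho \circ \psi_I$ together with confirming $\psi_I$ is an isomorphism (which piggybacks on Proposition~\ref{size_dU} and Theorem~\ref{dU_struc}). As an independent sanity check one has $|\ker H| = |d_IU|/|d_KU| = \phi(\tfrac{m}{g_I})/\phi(\tfrac{m}{g_K}) = \prod_{t=1}^{s}\phi(p_{j_t}^{e_{j_t}})$, matching the order of the claimed group.
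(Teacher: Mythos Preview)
Your argument is correct and follows essentially the same route as the paper: both identify $d_IU$ and $d_KU$ with products of unit groups via CRT and recognize $H$ as the projection (equivalently, reduction of modulus) onto the factors indexed by $R\setminus K$, whence the kernel is the product of the factors indexed by $K\setminus I$. The paper's proof is terser---it simply cites the structure of $d_IU$ and $d_KU$ from Theorem~\ref{dU_struc} and asserts that $H$ is a projection---whereas you supply the explicit isomorphisms $\psi_I,\psi_K$ and verify the commuting square $\psi_K\circ H=\rho\circ\psi_I$; this extra care is exactly what justifies the paper's assertion.
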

\begin{proof}
	Let $I = \{i_1, \ldots, i_t\}$ and $R \setminus K = \{h_1, \ldots, h_w\}. $ Then by Theorem \ref{homomorphism}, 
	\begin{align*}
	d_I &\cong (\Z/p_{j_1}^{e_{j_1}}\Z)^\times \times \cdots \times (\Z /p_{j_s}^{e_{j_s}}\Z)^\times \times (\Z / p_{h_1}^{e_{h_1}}\Z)^\times \times  \cdots \times (\Z / p_{h_w}^{e_{h_w}}\Z)^\times\\
	d_K& \cong  (\Z / p_{h_1}^{e_{h_1}}\Z)^\times \times  \cdots \times (\Z /p_{h_w}^{e_{h_w}}\Z)^\times
	\end{align*}
	Thus $H$ is a projection map, so the kernel is given by
	$$
	\text{Ker}(H) \cong (\Z / p_{j_1}^{e_{j_1}}\Z)^\times \times  \cdots \times (\Z / p_{j_s}^{e_{j_s}}\Z)^\times.$$
\end{proof}


\begin{theorem}
	Let $I \subseteq K$. The set of elements of the kernel of $H:d_IU \rightarrow d_KU$ is $\{d_I u \,  : \, u \equiv 1 \pmod{\frac{m}{g_K}}\}.$
\end{theorem}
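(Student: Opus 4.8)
The plan is to identify the kernel of $H$ using the explicit description of $d_I U$ and the way $H$ acts. Recall from earlier that $d_I U = \{d_I u : u \in U\}$ is a group with identity $d_I$, isomorphic to $(\Z/\tfrac{m}{g_I}\Z)^\times$ via the reduction $d_I u \mapsto (u \bmod \tfrac{m}{g_I})$ — indeed, by Proposition \ref{size_dU} and the explicit description $d_I U = \{g_I x : \gcd(x, \tfrac{m}{g_I}) = 1\}$, two elements $d_I u$ and $d_I v$ coincide mod $m$ exactly when $u \equiv v \pmod{\tfrac{m}{g_I}}$. So an element $d_I u \in d_I U$ is determined precisely by the residue $u \bmod \tfrac{m}{g_I}$. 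The homomorphism $H$ sends $d_I u$ to $d_K u$, and since $g_I \mid g_K$, passing from residues mod $\tfrac{m}{g_I}$ to residues mod $\tfrac{m}{g_K}$ is exactly the natural projection $(\Z/\tfrac{m}{g_I}\Z)^\times \twoheadrightarrow (\Z/\tfrac{m}{g_K}\Z)^\times$ (valid because $\tfrac{m}{g_K} \mid \tfrac{m}{g_I}$). Hence $H(d_I u) = d_K$ if and only if $u \equiv 1 \pmod{\tfrac{m}{g_K}}$.

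Concretely, I would argue as follows. First establish that $d_I u = d_K$ in $\Zm$ is equivalent to $d_K u \equiv d_K \pmod m$ — but the image element is $d_K u$, and $d_K u \equiv d_K \pmod m$ iff $\tfrac{d_K}{g_K} u \equiv \tfrac{d_K}{g_K} \pmod{\tfrac{m}{g_K}}$ iff (multiplying by the inverse of $\tfrac{d_K}{g_K}$, which is a unit mod $\tfrac{m}{g_K}$ since $\gcd(\tfrac{d_K}{g_K}, \tfrac{m}{g_K}) = 1$) $u \equiv 1 \pmod{\tfrac{m}{g_K}}$. This is the same manipulation used in the proof of Proposition \ref{size_dU}, so it can be cited rather than repeated. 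Conversely, if $u \equiv 1 \pmod{\tfrac{m}{g_K}}$ then running the equivalences backwards gives $d_K u \equiv d_K \pmod m$, i.e. $H(d_I u) = d_K$, the identity of $d_K U$, so $d_I u \in \operatorname{Ker}(H)$. This gives the set equality $\operatorname{Ker}(H) = \{d_I u : u \equiv 1 \pmod{\tfrac{m}{g_K}}\}$.

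One technical point to be careful about: the set on the right is described in terms of $u \in U$, but different $u$'s can give the same element $d_I u$, so one should note that the condition "$u \equiv 1 \pmod{\tfrac{m}{g_K}}$" is well-defined on the level of elements of $d_I U$ only up to the ambiguity $u \bmod \tfrac{m}{g_I}$ — however this is harmless since $\tfrac{m}{g_K} \mid \tfrac{m}{g_I}$, so whether $u \equiv 1 \pmod{\tfrac{m}{g_K}}$ depends only on $u \bmod \tfrac{m}{g_I}$, hence only on the element $d_I u$ itself. The main (minor) obstacle is just bookkeeping this well-definedness and lining up the divisibilities $g_I \mid g_K$, $\tfrac{m}{g_K} \mid \tfrac{m}{g_I}$ correctly; the algebra itself is a direct reuse of the cancellation argument from Proposition \ref{size_dU}. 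It may also be worth cross-checking the count against the previous theorem: the elements $u \equiv 1 \pmod{\tfrac{m}{g_K}}$ that are distinct mod $\tfrac{m}{g_I}$ number $\phi(\tfrac{m}{g_I}) / \phi(\tfrac{m}{g_K}) = \prod_{j \in K \setminus I} \phi(p_j^{e_j})$, matching the previously computed kernel $(\Z/p_{j_1}^{e_{j_1}}\Z)^\times \times \cdots \times (\Z/p_{j_s}^{e_{j_s}}\Z)^\times$.
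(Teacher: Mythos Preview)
Your proposal is correct and follows essentially the same route as the paper: reduce $H(d_I u)=d_K$ to $d_K u\equiv d_K\pmod m$, divide by $g_K$, and cancel the unit $a_K=d_K/g_K$ modulo $m/g_K$ to obtain $u\equiv 1\pmod{m/g_K}$, noting all steps reverse. Your additional remarks on well-definedness and the cardinality check are sound but go slightly beyond what the paper records.
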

\begin{proof}
	Suppose that $H(d_Iu)= d_K$. Then by definition of $H$, $ d_{K \setminus I} d_Iu \equiv d_Ku \pmod{m}$, which implies $d_K u \equiv d_K \pmod{m}$. Recall that $d_K= g_K a_K$ where $a_K$ is relatively prime to $\frac{m}{g_K}$. This congruence holds if and only if $a_K u \equiv a_K \pmod{\frac{m}{g_K}}$. Since $a_K$ is relatively prime to $\frac{m}{g_K}$, we obtain $u \equiv  1 \pmod{\frac{m}{g_K}}$. All of these implications are reversible, so the result follows.
\end{proof}

Recall that $d_K = a_K g_K$ where $g_K \equiv 0 \pmod{p_k}$ for all $k \in K$ and $d_K \equiv 1 \pmod{\frac{m}{g_K}}$. Note that $a_K$ is not unique. Thus it this kernel makes sense intuitively -- the elements that map to $d_K$ are the elements $g_I a_Iu$ such that $a_Iu$ is congruent to such a choice of $a_K$. 

By an almost identical proof, we can also classify the inverse images of elements in $d_KU$ under the map $H$. 

\begin{prop} \label{inverse_map}	Let $I \subseteq K$. The map $H: d_I U \rightarrow d_KU$ is a $\frac{\phi(g_K)}{\phi(g_I)}: 1$ map with $H^{-1}(d_K u) = \{ d_I v \, : \, v \equiv u \pmod{\frac{m}{g_K}} \text{ and } v \text{ is a unit}\}$.
\end{prop}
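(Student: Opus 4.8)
The plan is to leverage that $H\colon d_IU\to d_KU$ is a \emph{surjective} group homomorphism, which is already established in Theorem~\ref{homomorphism} (there $H(d_Iu)=d_Ku$, and as $u$ runs over $U$ the values $d_Ku$ run over all of $d_KU$). Consequently $H^{-1}(d_Ku)$ is, for each $d_Ku\in d_KU$, a coset of $\ker H$, so every fiber has the same cardinality $|\ker H|$; this is exactly the assertion that $H$ is a $|\ker H|\colon 1$ map. It therefore suffices to (i) identify $|\ker H|$ with $\frac{\phi(g_K)}{\phi(g_I)}$ and (ii) describe one fiber explicitly.

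For (i): by the kernel theorem proved just above, $\ker H\cong(\Z/p_{j_1}^{e_{j_1}}\Z)^\times\times\cdots\times(\Z/p_{j_s}^{e_{j_s}}\Z)^\times$ where $K\setminus I=\{j_1,\dots,j_s\}$, so $|\ker H|=\prod_{i=1}^s\phi(p_{j_i}^{e_{j_i}})=\phi(g_K/g_I)$. Since $g_I=\prod_{i\in I}p_i^{e_i}$ and $g_K/g_I=\prod_{i\in K\setminus I}p_i^{e_i}$ are coprime, multiplicativity of $\phi$ applied to $g_K=g_I\cdot(g_K/g_I)$ gives $\phi(g_K/g_I)=\phi(g_K)/\phi(g_I)$. (Equivalently, one can obtain $|\ker H|=|d_IU|/|d_KU|=\phi(m/g_I)/\phi(m/g_K)$ from Proposition~\ref{size_dU} together with surjectivity, and then use $\text{gcd}(g_K/g_I,\,m/g_K)=1$ to split $\phi$ in the factorization $m/g_I=(m/g_K)(g_K/g_I)$.) This proves the $\frac{\phi(g_K)}{\phi(g_I)}\colon1$ claim.

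For (ii), I would run the same computation used for $\ker H$. Take $v\in U$; then $H(d_Iv)=d_Kv$, so $H(d_Iv)=d_Ku$ iff $d_Kv\equiv d_Ku\pmod m$. Writing $d_K=a_Kg_K$ with $\text{gcd}(a_K,m/g_K)=1$, this congruence is equivalent to $a_Kv\equiv a_Ku\pmod{m/g_K}$, and cancelling the unit $a_K$ modulo $m/g_K$ yields $v\equiv u\pmod{m/g_K}$; all steps reverse. Hence $H^{-1}(d_Ku)=\{d_Iv:\ v\in U,\ v\equiv u\pmod{m/g_K}\}$, as stated. The one point needing care---the main, if mild, obstacle---is keeping the two moduli straight: an element of $d_IU$ is $d_Iv$, but by (the proof of) Proposition~\ref{size_dU} we have $d_Iv=d_Iv'$ in $\Zm$ exactly when $v\equiv v'\pmod{m/g_I}$, so the displayed set must be read as a set of residue classes mod $m$, and one should confirm it has the correct number of \emph{distinct} elements. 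That number is the count of units mod $m/g_I$ reducing to the fixed unit $u$ mod $m/g_K$, namely $\phi(m/g_I)/\phi(m/g_K)=\phi(g_K)/\phi(g_I)$, which matches the fiber size from (i) and closes the argument.
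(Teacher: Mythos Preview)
Your proof is correct and follows essentially the same approach as the paper: both reduce the fiber condition $H(d_Iv)=d_Ku$ to the congruence $v\equiv u\pmod{m/g_K}$ via $d_K=a_Kg_K$ with $a_K$ a unit mod $m/g_K$, and both obtain the fiber size by comparing $|d_IU|=\phi(m/g_I)$ with $|d_KU|=\phi(m/g_K)$. Your write-up is somewhat more careful---you make explicit the use of surjectivity and the coset structure of fibers, offer the kernel-structure theorem as an alternative route to the fiber size, and flag the potential ambiguity in counting \emph{distinct} elements of $d_IU$---but the underlying argument is the same as the paper's.
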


\begin{proof}
	Suppose that $H(d_Iu) = H(d_Iv)$. Then $d_Ku \equiv d_Kv \pmod{m}$, which implies that $u \equiv v \pmod{\frac{m}{g_K}}$. Observe that both $u$ and $v$ are units. To determine how many elements map to the same item, note that there are $\phi(\frac{m}{g_I})$ elements in $d_I U$ and there are $\phi(\frac{m}{g_K})$ elements such that $v \equiv u\pmod{\frac{m}{g_K}}$. Thus, this map is $\frac{\phi(g_K)}{\phi(g_I)} : 1$.
\end{proof}



\begin{remark}
	We have shown that each component is uniquely determined by a subset $I$ of $R$. Further, we have fully determined the structure of $C_I$ by understanding $d_IU$ and $y\pi_IU$ for $y$ and proper factor of $ \frac{g_I}{\pi_I}$. By the work in this section, the idempotent elements form a monoid under multiplication and we can determine the lattice structure of the components simply by knowing $I$. This implies that the entire structure of the sequential power graph is determined by the multiplicative structure of the idempotent elements and by the unit component $U$. 
\end{remark}

\section{Future directions}
Let $S$ be a multiplicative semigroup. We define the sequential power graph $(V,E)$ to be the graph with vertex set $V = S$ and with $(a,b) \in E$ if and only if $a = c^i$ and $b = c^{i+1}$ for some $i \in \N$ and $c \in S$. We have seen that the structure of the sequential power graph with $V = \Z/m\Z$ can be completely classified and this classification depends on the factorization of $m$ only. More generally if $S$ is finite, the basic structure of the components remains the same; that is, if $V = S$, then Theorems \ref{one_idem} through \ref{idem_comp} still hold. Thus every component will be uniquely determined by an idempotent in $S$ and all elements in a component $C_d$ will be roots of $d$.

Further, if multiplication is commutative, then Propositions \ref{dU_connected}, \ref{dU_mult_group},  \ref{one_idem}, and  \ref{group} hold as well. Thus when $S$ is a finite commutative semigroup and $U$ is the set of units in $S$, then $dU$ is a group, the components partition into $dU$ and a set of tails, and $f: U \rightarrow dU$ is a group homomorphism.

It is natural to ask what structure theorems one could prove about the sequential power graph for general finite $S$. If $S$ is commutative, what is the structure of $dU$ and what are the tails? If $S$ is not commutative, is there a way to describe the elements in each component? Do the components naturally partition into sets that are easier to understand?

\section{Acknowledgements}
A portion of this work was performed at the Ultrascale Systems Research Center (USRC) at Los Alamos National Laboratory, supported by the U.S. Department of Energy contract DE-FC02-06ER25750. The first author was supported in part by an appointment with the National Science Foundation (NSF) Mathematical Sciences Graduate Internship (MSGI) Program sponsored by the NSF Division of Mathematical Sciences. 

\bibliographystyle{plain}
\bibliography{biblio}

\end{document}